\numberwithin{equation}{section}
\newtheorem{theorem}[equation]{Theorem}
\newtheorem{proposition}[equation]{Proposition}
\newtheorem{lemma}[equation]{Lemma}
\newtheorem{corollary}[equation]{Corollary}
\theoremstyle{definition}
\newtheorem{rmk}[equation]{Remark}
\newenvironment{remark}[1][]{\begin{rmk}[#1] \pushQED{\qed}}{\popQED \end{rmk}}
\newtheorem{eg}[equation]{Example}
\newenvironment{example}[1][]{\begin{eg}[#1] \pushQED{\qed}}{\popQED \end{eg}}
\newtheorem{defn}[equation]{Definition}
\newenvironment{definition}[1][]{\begin{defn}[#1]\pushQED{\qed}}{\popQED \end{defn}}
\newtheorem{notn}[equation]{Notation}
\newenvironment{notation}[1][]{\begin{notn}[#1]\pushQED{\qed}}{\popQED \end{notn}}
\newenvironment{subeqns}[1][]{\addtocounter{equation}{-1}
\begin{subequations}

}{\end{subequations}}
\newcommand{\rA}{\mathrm{A}}
\newcommand{\rB}{\mathrm{B}}
\newcommand{\bC}{\mathbf{C}}
\newcommand{\rC}{\mathrm{C}}
\newcommand{\rD}{\mathrm{D}}
\newcommand{\bH}{\mathbf{H}}
\newcommand{\rH}{\mathrm{H}}
\newcommand{\bO}{\mathbf{O}}
\newcommand{\cP}{\mathcal{P}}
\newcommand{\bQ}{\mathbf{Q}}
\newcommand{\bS}{\mathbf{S}}
\newcommand{\fS}{\mathfrak{S}}
\newcommand{\bT}{\mathbf{T}}
\newcommand{\rU}{\mathrm{U}}
\newcommand{\bV}{\mathbf{V}}
\newcommand{\rX}{\mathrm{X}}
\newcommand{\bZ}{\mathbf{Z}}
\newcommand{\fg}{\mathfrak{g}}
\newcommand{\fh}{\mathfrak{h}}
\newcommand{\fl}{\mathfrak{l}}
\newcommand{\fn}{\mathfrak{n}}
\newcommand{\fp}{\mathfrak{p}}
\newcommand{\ft}{\mathfrak{t}}
\newcommand{\so}{\mathfrak{so}}
\newcommand{\defi}[1]{{\bf\upshape\sffamily #1}}
\renewcommand{\phi}{\varphi}
\renewcommand{\emptyset}{\varnothing}
\newcommand{\eps}{\varepsilon}
\def\Ddots{\mathinner{\mkern1mu\raise\p@
\vbox{\kern7\p@\hbox{.}}\mkern2mu
\raise4\p@\hbox{.}\mkern2mu\raise7\p@\hbox{.}\mkern1mu}}
\DeclareMathOperator{\diag}{diag}
\DeclareMathOperator{\rank}{rank}
\DeclareMathOperator{\Sym}{Sym}
\DeclareMathOperator{\Tor}{Tor}
\newcommand{\GL}{\mathbf{GL}}
\newcommand{\Sp}{\mathbf{Sp}}
\newcommand{\Spin}{\mathbf{Spin}}
\newcommand{\Pin}{\mathbf{Pin}}
\newcommand{\ch}{\operatorname{char}}
\newcommand{\fsl}{\mathfrak{sl}}
\newcommand{\fgl}{\mathfrak{gl}}
\newcommand{\fso}{\mathfrak{so}}
\newcommand{\fsp}{\mathfrak{sp}}
\newcommand{\inv}{\mathrm{inv}}
\date{September 24, 2025}
\title[Kostant $\rho$-decomposition of homology I.]{Kostant $\rho$-decomposition of homology I. \\
  Finite-dimensional representations}
\author{Steven V Sam}
\address{Department of Mathematics, University of California San Diego, La Jolla, CA, USA}
\email{ssam@ucsd.edu}
\thanks{SS was partially supported by NSF DMS-2302149.}
\author{Keller VandeBogert}
\address{Department of Mathematics, University of Kentucky, Lexington, KY, USA}
\email{keller.v@uky.edu}
\thanks{KV was partially supported by NSF DMS-2202871. Part of this work was conducted while KV was visiting the Simons Institute for the Theory of Computing.}
\author{Jerzy Weyman}
\address{Department of Mathematics, Jagiellonian University, Krak\'ow, Poland}
\email{jerzy.weyman@gmail.com}
\thanks{JW was supported by Polish NCN grants MAESTRO UNO 2019/34/A/ST1/263 and OPUS DEC 2024/55/B/ST1/01437.}
\begin{document}

\maketitle

\begin{abstract}
  We give explicit, uniform formulas for the graded characters and total ranks of the Lie algebra homology of finite-dimensional representations in all classical types. In many cases, these compute the Tor groups of finite length modules over polynomial rings, and this is the first in a series of papers to investigate total rank conjectures from this perspective. These formulas refine and generalize the classical $\rho$-decomposition of Kostant, and in particular we prove that the characters involved exhibit three structural phenomena: divisibility (by a large power of 2), equidistribution, and uniform factorization formulas.
\end{abstract}

\setcounter{tocdepth}{1}
\tableofcontents

\section{Introduction}

The purpose of this series of papers is to develop a more robust set of tools for the computation of the total characters of Lie algebra homology, by which we mean the graded sum $\sum_{i\ge 0} \ch\, \rH_i(\fg; -)\, t^i$ and its specialization at $t=1$. Our motivation is twofold:
\begin{enumerate}
    \item The investigation of conjectured ``equidistribution" phenomena relating to the syzygies of coordinate rings of nilpotent orbit closures, and
    \item The construction of new counterexamples to certain rank conjectures arising in commutative algebra and algebraic topology.
\end{enumerate}
In this first paper, we will lay the groundwork in the most basic setting: computing the total characters of the Lie algebra homology in the case of finite-dimensional representations of a reductive Lie algebra.

The total ranks of homology, while not a K-theoretic invariant, are important objects of study for their relation to the topological, algebraic, and representation-theoretic ``rank conjectures". Algebraically, these conjectures (many of which are now theorems \cite{walkerTRC,vwTRC}) state that if a module has a large annihilator, then the total rank of its minimal free resolution must also be ``large" (we will refrain from a precise definition of ``large" for now). In topology, it says that if a topological space has a ``nice" action by a large torus, then the total rank of the rational homology must also be large \cite{halp1,carlsson}. In representation theory, these conjectures say that Lie algebras with a large center must also have large Lie algebra homology \cite{halp2,tir,gjranks}. The measure of ``large" in all of these contexts is always a numerical lower bound determined by some power of $2$.

Another lesser-known context where powers of $2$ arise is from a phenomenon that we have termed \defi{equidistribution}, which is where the building blocks of certain types of homology or Tor groups are highly redundant in nature (see Examples \ref{ex:typeAgrouping}, \ref{ex:typeCgrouping}). In practice, this ``redundancy" is often controlled by a power of $2$, reminiscent of the powers of $2$ alluded to above regarding the total/toral rank conjectures. An early manifestation of this equidistribution phenomenon was discovered by Kostant \cite{kostant-clifford}. The goal of this paper is to show that the powers of $2$ arising from both total rank and equidistributivity phenomena are not just surface-level similarities, but seem to be very closely related notions. Our main vessel for this are the following facts for certain distinguished parabolic decompositions in the Lie types A, B, C, and D:
\begin{itemize}
    \item We prove a refined version of equidistributivity which shows that even after taking account of homological degrees, the \emph{graded} total character of Lie algebra homology of any finite-dimensional representation is divisible by a large power of $t+1$ (where the variable $t$ is keeping track of homological degree). 
    \item We prove that there exist uniform regroupings of the representations appearing in the Lie algebra homology of an arbitrary finite-dimensional representation in such a way that each group is isomorphic to the \emph{same} fixed representation.
    \item We prove that the number of groups appearing in the above regrouping is always a power of $2$, and after combining these terms we obtain closed-form product formulas for the total character (and ranks) of the Lie algebra homology of a finite-dimensional representation that generalize the classical $\rho$-decompositions of Kostant. 
\end{itemize}

\subsection{A Generalized $\rho$-Decomposition}

Let $\fg$ be a reductive Lie algebra of rank $n$. We let $L^\fg_\lambda$ be the irreducible $\fg$-representation with highest weight $\lambda$. Kostant proved the following identity, which we will refer to as the $\rho$-decomposition:
\[
  \bigwedge^\bullet\fg \cong (L^\fg_\rho \otimes L^\fg_\rho)^{\oplus 2^n}.
\]
Here $\rho$ is the half-sum of all positive roots. See \cite{kostant, kostant-clifford}.
The idea is rather simple: Kostant shows (see \cite[(5.9.5)]{kostant}) that the multiset of weights of $L^\fg_\rho$ is $\{ \frac12 (\pm \beta_1 \pm \cdots \pm \beta_N) \}$ where $\{\beta_1,\dots,\beta_N\}$ are the positive roots of $\fg$ and we range over all $2^N$ sign choices. Hence the character of $L^\fg_\rho \otimes L^\fg_\rho$ agrees with $\bigwedge^\bullet(\fn \oplus \fn^*)$ where $\fn$ is the sum of the positive root spaces, and the $2^n$ comes from tensoring with $\bigwedge^\bullet \fh$ where $\fh$ is the Cartan subalgebra.

In our setting, we will take advantage of another result of Kostant describing the Lie algebra homology over nilpotent subalgebras \cite{kostant}. More precisely, we choose the following (see also Table \ref{tab:parabolics}); note here that $\eps \in \{ 0 ,1 \}$ is used to distinguish odd/even cases:

\begin{enumerate}
    \item[\textbf{Type A}] For $\fgl_{2n + \eps}$, choose the parabolic subalgebra $\fp$ with Levi factor \( \mathfrak{l} \cong \mathfrak{gl}_{n+\eps} \times \mathfrak{gl}_{n} \).
    \item[\textbf{Type B}] For $\fso_{2m+1}$, choose the parabolic subalgebra $\fp$ with Levi factor \( \mathfrak{l} \cong \fgl_m \).
    \item[\textbf{Type C}] For $\fsp_{2m}$, choose the  parabolic subalgebra $\fp$ with Levi factor \( \mathfrak{l} \cong \fgl_m \).
    \item[\textbf{Type D}] For $\fso_{2m}$, choose the  parabolic subalgebra $\fp$ with Levi factor \( \mathfrak{l} \cong \fgl_m \).
\end{enumerate}
In the type A setting, this is the parabolic decomposition induced by isolating the node that splits the Dynkin diagram as ``evenly" as possible. In the type B/C/D cases, these are all the parabolic decompositions induced by isolating the Dynkin node corresponding to the ``last" simple root (i.e, either $\eps_m$, $2 \eps_m$, or $\eps_{m-1} + \eps_{m}$, respectively). 

Our main result shows that the total characters of the Lie algebra homology of any finite dimensional representation over the nilpotent subalgebra have closed forms that closely mirror the original decomposition proved by Kostant. For technical reasons, the precise versions of these statements need to use highest weight representations of Pin groups in the type D cases, but for the sake of readability we will ignore this fact temporarily:

\begin{theorem}[Generalized $\rho$-decomposition]\label{thm:introMainTheorem}
Set $m = 2n + \eps$ where $\eps \in \{ 0 ,1 \}$. Let $\fg$ be classical of type $\mathrm{A,B,C,D}$ with a  parabolic subalgebra $\fp=\fl\oplus\fn$ as in Table~\ref{tab:parabolics}, and let $L^\fg_\lambda$ be the finite-dimensional irreducible of highest weight $\lambda$. Then:

\begin{enumerate}
\item[\textbf{(Divisibility)}] The graded Lie homology character is divisible by a power of $1+t$:
\[
\sum_{i\ge0}\ch\,\rH_i(\fn_-;L^\fg_\lambda)\,t^i \;=\; (1+t)^{n}\cdot \Phi_\lambda(t),
\]
for some character-valued polynomial $\Phi_\lambda(t)$.

\item[\textbf{(Equidistribution)}] After specializing $t=1$, there is a canonical partition of the summation terms into $2^n$ blocks, all with the same total character. In particular
\[
\sum_{i\ge0}\ch\,\rH_i(\fn_-;L^\fg_\lambda)\;=\;2^n\cdot \Xi_\lambda,
\]
for an explicitly described character $\Xi_\lambda$.

\item[\textbf{(Product formula)}] Let $(G_{\mathrm{top}},G_{\mathrm{bot}})$ be as in Table~\ref{tab:parabolics}. There exist ``top'' and ``bottom'' weights $\rho^{\mathrm{top}}_\lambda$ and $\rho^{\mathrm{bot}}_\lambda$, explicitly determined by $\lambda$, such that
\[
\sum_{i\ge0}\ch\,\rH_i(\fn_-;L^\fg_\lambda)
\;=\;2^{n} \cdot \big(\ch\,L^{G_{\mathrm{top}}}_{\rho^{\mathrm{top}}_\lambda}\big)\,
\big(\ch\,L^{G_{\mathrm{bot}}}_{\rho^{\mathrm{bot}}_\lambda}\big).
\]
\end{enumerate}
\end{theorem}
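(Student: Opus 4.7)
The plan is to apply Kostant's theorem to rewrite
\[
\sum_{i\ge 0}\ch\,\rH_i(\fn_-;L^\fg_\lambda)\,t^i = \sum_{w\in W^\fp} t^{\ell(w)}\,\ch\,L^\fl_{w\cdot\lambda},
\]
where $W^\fp$ is the set of minimal-length coset representatives of $W_\fl\backslash W$, and then to analyze the right-hand side via an explicit combinatorial model. For each of the four parabolic choices in Table~\ref{tab:parabolics} I would record the combinatorial description of $W^\fp$---as $(n+\eps,n)$-shuffles in type A, and as sign-change patterns on $\{1,\dots,m\}$ in types B, C, D---together with explicit formulas for $\ell(w)$ and for the dominant $\fl$-weight $w\cdot\lambda$.

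To prove the divisibility and equidistribution statements, I would construct a $(\bZ/2)^n$-action on $W^\fp$ by involutions $\sigma_1,\dots,\sigma_n$ that shift the length by $\pm 1$ while preserving the $\fl$-character $\ch L^\fl_{w\cdot\lambda}$. The generators come directly from the combinatorial model: in types B, C, D they act by simultaneous sign-flips on paired coordinates (pairing $i$ with $m+1-i$, or the analogous pairing dictated by the Levi), and in type A they act by exchanging a fixed pair of positions between the top and bottom halves of a shuffle. Once $\ell(\sigma_i w) = \ell(w)\pm 1$ and $\ch L^\fl_{\sigma_i w\cdot\lambda} = \ch L^\fl_{w\cdot\lambda}$ are verified for each generator, pairing terms in the graded sum yields a factor of $(1+t)$ per generator and hence $(1+t)^n$ overall; specializing $t=1$ then produces the equidistribution statement and the identity $\sum_i \ch\,\rH_i = 2^n\,\Xi_\lambda$, with $\Xi_\lambda$ the character of any single $(\bZ/2)^n$-orbit contribution.

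For the product formula, I would apply the Weyl character formula to each summand,
\[
\ch\,L^\fl_\mu = \frac{1}{D_\fl}\sum_{u\in W_\fl}\sgn(u)\,e^{u(\mu+\rho_\fl)},
\]
and reorganize the resulting double sum $\sum_{w\in W^\fp}\sum_{u\in W_\fl}$ as an alternating-symmetric sum over the full Weyl group $W$. Splitting the coordinates $\eps_1,\dots,\eps_m$ into the ``top'' block $(\eps_1,\dots,\eps_{n+\eps})$ and the ``bottom'' block $(\eps_{n+\eps+1},\dots,\eps_m)$ dictated by Table~\ref{tab:parabolics}, the numerator of this expression factors as a product over the two blocks, and a Weyl-denominator identity applied separately to each block identifies each factor with the Weyl-character numerator of the claimed irreducible $G_{\mathrm{top}}$- or $G_{\mathrm{bot}}$-representation. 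The explicit weights $\rho^{\mathrm{top}}_\lambda$ and $\rho^{\mathrm{bot}}_\lambda$ are then read off from an orbit representative of the $(\bZ/2)^n$-action.

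The hardest part will be arranging the $(\bZ/2)^n$-action and the top/bottom coordinate split so that they are compatible type-by-type, and verifying that the resulting factorization genuinely produces irreducible $G_{\mathrm{top}}$- and $G_{\mathrm{bot}}$-characters rather than merely virtual ones. Type D is the most delicate case, since its correct formulation requires Pin-group highest weights and the central-character shifts implicit in the $(\bZ/2)^n$-action must be tracked carefully. Identifying the explicit weights $\rho^{\mathrm{top}}_\lambda$ and $\rho^{\mathrm{bot}}_\lambda$ in each of the four types and confirming that they yield irreducible characters is where most of the technical casework of the proof would concentrate.
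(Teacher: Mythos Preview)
Your proposal has a fundamental gap: the involutions $\sigma_i$ you describe cannot exist with the property $\ch L^\fl_{\sigma_i w\cdot\lambda} = \ch L^\fl_{w\cdot\lambda}$. The individual $\fl$-representations indexed by $w\in W^\fp$ are genuinely different, and no pairing on $W^\fp$ preserves them. The equidistribution statement holds only after restricting from the natural rank-$m$ torus of $\fl\cong\fgl_m$ (or $\fgl_{n+\eps}\times\fgl_n$) to a torus of roughly half the dimension, via the specialization $x_{n+i}=x_i^{-1}$ in types B/C/D or identifying the two sets of variables in type A. This restriction is the key insight, and it is invisible at the level of $W^\fp$-combinatorics alone. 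You can see this already in small examples: for $\fsp_8$ with $\lambda=0$, the subsets $S=\{2\}$ and $S=\{4\}$ give $\bS_{(4,1,1)}V$ and $\bS_{(2)}V$, which are not isomorphic as $\fgl_4$-representations.

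The paper's argument is quite different from what you outline. It packages the Kostant sum as a single $m\times m$ determinant $\det M_\lambda(y;t)$ (via multilinearity in the columns) and then performs row operations---adding row $n{+}i$ to row $i$---to factor out $(1+t)^n$ directly; this is where the torus restriction enters, since those row operations exploit $z_{n+i}=z_i^{-1}$. Equidistribution is proved by showing that certain auxiliary matrices $M(\mathbf{T})$ have rank~$<m$ and hence vanishing determinant, yielding linear relations among the block sums $\bH_{\lambda,T}$; M\"obius inversion then forces all blocks to coincide. The product formula comes from setting $t=1$ in the row-reduced determinant, at which point alternate columns vanish in the top half and the matrix block-factorizes into a type-C numerator and a type-D numerator. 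None of this proceeds via a free $(\bZ/2)^n$-action on $W^\fp$, and your proposed numerator factorization over the full Weyl group would not survive without the half-torus specialization.
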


\begin{table}[h]
\centering
\begin{tabular}{c|c|c|c}
Type & $(\fl,\fn)$  & $(G_{\mathrm{top}},G_{\mathrm{bot}})$ \\
\hline
A & $(\fgl_{n+\eps}\!\times\!\fgl_n,\;V^*\!\otimes U)$  & $(\fgl_{n+\eps},\fgl_{n})$ \\
B & $(\fgl_m,\;V\oplus\wedge^2 V)$ & $(\fsp_{2n},\Pin_{2(n+\eps)})$ \\
C & $(\fgl_m,\;\Sym^2 V)$ & $(\fsp_{2n},\Pin_{2(n+\eps)})$ \\
D & $(\fgl_m,\;\wedge^2 V)$  & $(\fsp_{2n},\Pin_{2(n+\eps)})$
\end{tabular}
\caption{Parabolics and product targets. Exact recipes for
$\rho^{\mathrm{top}}_\lambda,\rho^{\mathrm{bot}}_\lambda$ may be found in \S\S\ref{sec:typeArhodecomp},  \ref{sec:typeBCDrhodecomp}. }
\label{tab:parabolics}
\end{table}
We also prove analogous results for the natural parabolic decomposition of $\fgl_{n+k}$ with Levi factor $\fl = \fgl_n \times \fgl_k$, though there is not as simple of a factorization formula for arbitrary $k$. 

Outside of the type B case, the nilpotent subalgebras we care about are abelian, so these character formulas are computing the total ranks of the minimal free resolutions of finite dimensional modules over a polynomial ring. Even in very simple cases these complexes are nontrivial: when $k=1$ in the type A setting, we recover the pure free resolutions constructed by Eisenbud--Fl\o ystad--Weyman \cite{EFW} (see Example \ref{ex:pureFreeRes} for details). 

As alluded to above, the phenomenon of \defi{Tor equidistribution} implies that the syzygies of these modules are obtained by copy-pasting the same fixed representation, spread out amongst various homological degrees. This property is quite subtle in general, since the way these representations are spread out may be highly nontrivial (see Example \ref{ex:typeAgrouping}). Indeed, one of the key insights of this work is the fact that equidistribution only occurs after restricting to a torus that is about half the dimension of the ``naturally'' occurring torus action.

This phenomenon seems to be more common than one might initially expect; the ubiquity of Tor equidistribution upon restricting torus actions will be the subject of forthcoming work. The formulas of Theorem \ref{thm:introMainTheorem} specialize further to explicit dimension formulas which allow us to prove that the total ranks of Lie algebra homology seem to be ``discretely'' distributed:

\begin{corollary}\label{cor:2n+2n-1 conj}
    In all of the cases of Theorem \ref{thm:introMainTheorem} where $\fn$ is abelian, there is an equality
    $$\dim \rH_\bullet (\fn_- ; L^{\fg}_\lambda) = 2^{\dim \fn} \cdot C_\lambda,$$
    where $C_\lambda \geq 1$ has the property that $C_\lambda \geq 1.5$ if $C_\lambda \neq 1$. In particular, all of the finite dimensional modules considered in this paper satisfy the ``$2^n + 2^{n-1}$ conjecture'' (see for instance \cite{charalambous,evangrif,CEM}). 
\end{corollary}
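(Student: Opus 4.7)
The plan is to combine Theorem~\ref{thm:introMainTheorem}(3) with the Weyl dimension formula and reduce the claim to an arithmetic statement. Specializing characters at the identity gives
\[
\dim \rH_\bullet(\fn_-; L^\fg_\lambda) \;=\; 2^n \cdot \dim L^{G_{\mathrm{top}}}_{\rho^{\mathrm{top}}_\lambda} \cdot \dim L^{G_{\mathrm{bot}}}_{\rho^{\mathrm{bot}}_\lambda},
\]
and we set $C_\lambda := \dim \rH_\bullet(\fn_-; L^\fg_\lambda)/2^{\dim \fn}$. Using Kostant's identity $\dim L^G_\rho = 2^{N(G)}$ (with $N(G)$ the number of positive roots of $G$) together with the Weyl dimension formula, each $\dim L^G_\mu$ decomposes as $2^{N(G)} \cdot \prod_{\alpha > 0}\langle \mu + \rho, \alpha^\vee\rangle/\langle 2\rho, \alpha^\vee\rangle$. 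A direct verification in each abelian case (types A, C, D, with the $\Pin$ conventions of Table~\ref{tab:parabolics}) confirms that the total power of $2$ produced equals $2^{\dim \fn}$, so $C_\lambda$ simplifies to a product of rational pairing ratios indexed by the positive roots of $G_{\mathrm{top}}$ and $G_{\mathrm{bot}}$.

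Writing $\delta^{\mathrm{top}} := \rho^{\mathrm{top}}_\lambda - \rho^{\mathrm{top}}$ and $\delta^{\mathrm{bot}} := \rho^{\mathrm{bot}}_\lambda - \rho^{\mathrm{bot}}$, the explicit recipes for the shifted weights in \S\ref{sec:typeArhodecomp} and \S\ref{sec:typeBCDrhodecomp} show that each $\delta$ is a dominant integral weight vanishing when $\lambda = 0$. Each ratio then takes the form $1 + \langle \delta, \alpha^\vee\rangle/\langle 2\rho, \alpha^\vee\rangle$, which is $\geq 1$ by dominance, so $C_\lambda \geq 1$ always, with equality whenever $\delta^{\mathrm{top}} = \delta^{\mathrm{bot}} = 0$.

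The main obstacle is the sharper dichotomy: $C_\lambda = 1$ or $C_\lambda \geq 3/2$. The key observation is that for a simple positive root $\alpha_i$ the denominator $\langle 2\rho, \alpha_i^\vee\rangle$ equals $2$, so a nontrivial integer pairing produces a factor of at least $3/2$; intermediate ratios such as $5/4$ can arise from longer positive roots where the denominator is larger, but dominance of $\delta$ forces such contributions to appear together with at least one simple-root factor of size $\geq 3/2$, ruling out total products lying in the open interval $(1, 3/2)$. Carrying out this combinatorial exclusion type by type --- using the explicit forms of $\delta^{\mathrm{top}}$ and $\delta^{\mathrm{bot}}$ arising from the recipes for $\rho^{\mathrm{top}}_\lambda$ and $\rho^{\mathrm{bot}}_\lambda$ --- is the main technical step, and establishes the ``$2^n + 2^{n-1}$ conjecture'' for these modules.
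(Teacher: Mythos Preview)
Your strategy matches the paper's: use the product formula from part~(3), apply the Weyl dimension formula, extract $2^{\dim\fn}$, and bound the residual product from below. But your execution of the power-of-$2$ extraction has a genuine error. You invoke Kostant's identity $\dim L^G_\rho=2^{N(G)}$ and then set $\delta^{\mathrm{top}}:=\rho^{\mathrm{top}}_\lambda-\rho(G_{\mathrm{top}})$, claiming this vanishes at $\lambda=0$. It does not: in type~C one has $G_{\mathrm{bot}}=\Pin_{2(n+\eps)}$ with $\rho^{\rD}=(n+\eps-1,\dots,1,0)$, whereas $\rho^{\mathrm{bot}}_0=(n+\eps,\dots,1)$; and in type~D one has $\rho^{\mathrm{top}}_0=(n-1,\dots,0)\ne\rho^{\rC}$. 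A direct count then gives $n+N(G_{\mathrm{top}})+N(G_{\mathrm{bot}})\ne\dim\fn$ (e.g.\ for $\fsp_{4n}$ it is $2n^2$ versus $\dim\Sym^2V=2n^2+n$), so the ``direct verification'' you assert would in fact fail, and with it the claim that each residual factor has the form $1+\langle\delta,\alpha^\vee\rangle/\langle2\rho,\alpha^\vee\rangle$ with $\delta$ dominant and vanishing at $\lambda=0$.

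The paper avoids this by a nontrivial reindexing of numerators and denominators in the Weyl dimension products $\Xi_1,\Xi_2$ (see the Propositions opening \S\S\ref{sec:typeCex}--\ref{sec:typeDex}) to obtain an explicit formula in which $2^{\dim\fn}$ and factors of the shape $1+(\lambda_i-\lambda_j)/(j-i)$ (with $i\equiv j\bmod 2$) are manifest. The $3/2$ bound then follows immediately from the $j=i+2$ factor when $\lambda$ is nonconstant; the case where all $\lambda_i$ are equal and positive (which is nontrivial in types~C/D and which your sketch does not address separately) is handled by a different factor, namely $1+\lambda_{2n-1+\eps}/(m+1-2n)$ in type~C and its analogue in type~D. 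Your framework can be repaired by instead taking $\delta:=\rho^{\mathrm{top}}_\lambda-\rho^{\mathrm{top}}_0$ and comparing against $\rho^{\mathrm{top}}_0+\rho$, but then the simple-root denominators are no longer uniformly $2$ (they can be $1$ or $4$), and the case analysis needed to recover $3/2$ becomes essentially the one the paper carries out.
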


Corollary \ref{cor:2n+2n-1 conj} may be seen as evidence that the ``$2^n + 2^{n-1}$ conjecture'' holds based on the heuristic that resolutions obtained by Kostant's theorem are as small as possible for a given set of parameters (as previously mentioned, the pure free resolutions of \cite{EFW} as well as the Iyengar--Walker counterexamples \cite{IW} are both special cases of Kostant's theorem).

In fact, Theorem \ref{thm:introMainTheorem} is a consequence of much more general combinatorial properties of certain classes of determinants which we develop in the first part of the paper, and in the second part of the paper we see how these combinatorial identities imply the homological behavior of Theorem \ref{thm:introMainTheorem}. 

\subsection{Organization} 

This paper is structured in two parts: in Part~\ref{part:1}, we perform a strictly combinatorial study of polynomials that specialize to the graded characters of Lie algebra homology that we are interested in. Remarkably, the combinatorics involved in understanding the type B/C/D cases can all be handled uniformly, while the type A case is largely disjoint from these cases. In Part~\ref{part:2} of the paper, we reinterpret the combinatorial identities in part~\ref{part:1} algebraically and deduce Theorem \ref{thm:introMainTheorem} in all of the relevant cases. We conclude with many examples illustrating the generalized $\rho$-decompositions.

\part{Combinatorial Aspects} \label{part:1}

\section{Preliminaries}

We fix notation and record the character formulas and determinant identities we need.

\subsection{Fixed notation}\label{subsec:fixedNotation}

We adopt the following set-theoretic conventions for the remainder of the paper. The following notation will be used ubiquitously and often tacitly throughout this paper.

\begin{notation}\label{not:basicNotation}
For a positive integer $m$, write $[m]:=\{1,2,\dots,m\}$. The cardinality of a finite set $S$ is denoted $|S|$.  For $0\le r\le m$, let
\[
\binom{[m]}{r}=\{\,S\subseteq [m]\mid |S|=r\,\}.
\]
If $S\subseteq [m]$, its complement in $[m]$ is $S^c=[m]\setminus S$ and we define $S^{\inv}:=\{\,m+1-s\mid s\in S\,\}$. We write $\cP([m])$ for the Boolean lattice of all subsets of $[m]$.
For $\eps\in\{0,1\}$ set
\[
[m]_\eps:=\{\,i\in[m]\mid i\equiv \eps \ (\mathrm{mod}\ 2)\,\},
\qquad
S_\eps:=S\cap [m]_\eps.
\]
 We also use $[m]_{\rm odd}=[m]_1$ and $[m]_{\rm even}=[m]_0$. For $S=\{s_1<\cdots<s_r\}\subseteq [m]$, define
\[
\Sigma(S):=\sum_{i=1}^r s_i.
\]
Note that $(S^{\inv})^{\inv}=S$ and $\Sigma(S^{\inv})=|S|(m+1)-\Sigma(S)$.

For $\alpha=(\alpha_1,\dots,\alpha_m)$ and $S=\{s_1<\cdots<s_r\}\subseteq [m]$, set
\[
\alpha|_S:=(\alpha_{s_1},\dots,\alpha_{s_r}). \qedhere
\]
\end{notation}

\begin{example}
  For $m=7$ and $S=\{2,3,6\}$, we have $S_0=\{2,6\}$, $S_1=\{3\}$, and $S^{\inv}=\{6,5,2\}$ with $\Sigma(S^{\inv})=3\cdot 8-\Sigma(S)=24-11=13$.
\end{example}

\subsection{Characters} \label{sec:char}

We will work in the ring $A=\bQ[x_1^{\pm 1/2},\dots,x_m^{\pm 1/2}]$ of Laurent polynomials in $x_1^{1/2},\dots,x_m^{1/2}$. Given $\alpha \in \bZ^m \cup (\frac12 + \bZ)^m$, define:
\begin{subequations}
\begin{align}
  a_\alpha(x) &= \det(x_i^{\alpha_j}), &\rho^\rA &= (m-1,\dots,1,0), &s_\alpha^\rA(x) &= \frac{a_{\alpha+\rho^\rA}(x) }{a_{\rho^\rA}(x)}.\\
  b_\alpha(x) &= \det(x_i^{\alpha_j}-x_i^{-\alpha_j}), &\rho^\rB &= (m-\frac12,\dots,\frac32,\frac12), &s_\alpha^\rB(x) &= \frac{b_{\alpha+\rho^\rB}(x) }{b_{\rho^\rB}(x)}.\\
  c_\alpha(x) &= \det(x_i^{\alpha_j}-x_i^{-\alpha_j}), &\rho^\rC &= (m,\dots,2,1), &s_\alpha^\rC(x) &= \frac{c_{\alpha+\rho^\rC}(x) }{c_{\rho^\rC}(x)}.\\
  d_\alpha(x) &= \frac12 \det(x_i^{\alpha_j} + x_i^{-\alpha_j}), &\rho^\rD &= (m-1,\dots,1,0), &s_\alpha^\rD(x) &= \frac{2d_{\alpha+\rho^\rD}(x) }{d_{\rho^\rD}(x)}.  
\end{align}
\end{subequations}
For short, we denote $s^\rA_\alpha(x)$ as simply $s_\alpha(x)$. When $\alpha = \lambda$ is an integer partition, $s_\lambda(x)$ is the Schur polynomial corresponding to the partition $\lambda$. Also, $b_\alpha$ and $c_\alpha$ are evidently the same polynomials, but we use different notation to distinguish which types we are working in. 

Each of the $s_\alpha^\rX$ are elements of $A$ for $\rX \in \{\rA,\rB,\rC,\rD\}$. We will allow $m$ to vary; rather than build this into the notation, the value of $m$ is implied by the number of input variables when needed. In the following, the notation \(\ch\) denotes the formal character.

\begin{proposition}[Weyl character formulas: determinantal forms]\label{prop:WeylCharForms}
  \ 
\begin{enumerate}
\item Pick $\lambda = (\lambda_1 ,\lambda_2 , \dots , \lambda_m) \in \bZ^m$ with $\lambda_1 \ge \lambda_2 \ge \cdots \ge  \lambda_m$. Let \(L_\lambda^{\fgl_m}\) be the irreducible $\fgl_m(\bC)$-representation with highest weight $\lambda$. Then
  \[
    \ch L_\lambda^{\fgl_m} = s^\rA_\lambda (x).
  \]

\item Pick $\lambda = (\lambda_1, \dots , \lambda_m) \in \bZ^m \cup (\tfrac12 + \bZ)^m$ with $\lambda_1 \ge \cdots \ge \lambda_m \ge 0$. Let \(L_\lambda^{\so_{2m+1}}\) be the irreducible $\so_{2m+1}(\bC)$-representation with highest weight $\lambda$. Then
    \[
    \ch L_\lambda^{\so_{2m+1}} = s_\lambda^\rB (x).
    \]
    
\item Pick $\lambda =(\lambda_1, \dots , \lambda_m) \in \bZ^m$ with $\lambda_1 \ge \cdots \ge \lambda_m \ge 0$. Let \(L_\lambda^{\fsp_{2m}}\) be the irreducible $\fsp_{2m}(\bC)$-representation with highest weight $\lambda$. Then
    \[
    \ch L_\lambda^{\fsp_{2m}} = s^\rC_\lambda (x).
    \]

\item Pick $\lambda = (\lambda_1 ,\lambda_2 , \dots , \lambda_m)\in \bZ^m \cup (\tfrac12 + \bZ)^m$ with $\lambda_1 \ge \cdots \ge \lambda_m \ge 0$. Let \(L^{\Pin_{2m}}_\lambda\) be the irreducible $\Pin_{2m}(\bC)$-representation with highest weight $\lambda$. Then
    \[
        \ch L^{\Pin_{2m}}_\lambda =
        \begin{cases}
            2\,s^\rD_\lambda (x), & \text{if } \lambda_m = 0, \\
             s^\rD_\lambda (x), & \text{if } \lambda_m > 0.
        \end{cases}
    \]
\end{enumerate}
\end{proposition}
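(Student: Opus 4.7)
The plan is to derive each formula by applying the Weyl character formula
\[
\ch L_\lambda=\frac{\sum_{w\in W}\sgn(w)\,e^{w(\lambda+\rho)}}{\sum_{w\in W}\sgn(w)\,e^{w(\rho)}}
\]
to each classical type and rewriting the alternating sum $A^\fg_\alpha(x):=\sum_{w\in W}\sgn(w)\,e^{w(\alpha)}$ as one of the determinants $a_\alpha,b_\alpha,c_\alpha,d_\alpha$ via the Leibniz formula. Inserting $\alpha=\lambda+\rho$ in the numerator and $\alpha=\rho$ in the denominator will produce the displayed quotient in each case.

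For type A, $W=S_m$ acts by permuting coordinates with the ordinary sign, so by Leibniz $A^{\fgl_m}_\alpha(x)=\det(x_i^{\alpha_j})=a_\alpha(x)$, and dividing by the Vandermonde $a_{\rho^\rA}$ yields part (1). For types B and C, the Weyl group is the hyperoctahedral group $S_m\ltimes\{\pm 1\}^m$ with sign character $\sgn(\sigma)\prod_i\eps_i$; the $\eps$-sum factors over coordinates as
\[
A^{\rB,\rC}_\alpha(x)=\sum_\sigma\sgn(\sigma)\prod_i\bigl(x_i^{\alpha_{\sigma(i)}}-x_i^{-\alpha_{\sigma(i)}}\bigr)=\det(x_i^{\alpha_j}-x_i^{-\alpha_j})=b_\alpha(x)=c_\alpha(x),
\]
so parts (2) and (3) follow after dividing by the corresponding denominators at $\rho^\rB$ and $\rho^\rC$.

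For type D, $W^\rD$ consists of signed permutations with an even number of sign changes, and its sign character is $\sgn(\sigma)$ alone (the matrix determinant). Restricting the $\eps$-sum to the constraint $\prod_i\eps_i=1$ and invoking the identity
\[
\sum_{\eps:\,\prod_i\eps_i=1}\prod_i y_i^{\eps_i}=\tfrac12\Bigl[\prod_i(y_i+y_i^{-1})+\prod_i(y_i-y_i^{-1})\Bigr]
\]
gives $A^\rD_\alpha(x)=d_\alpha(x)+\tfrac12 b_\alpha(x)$. Since the $j=m$ column of the matrix defining $b_{\rho^\rD}$ consists entirely of zeros (the entries are $x_i^0-x_i^0=0$), one has $b_{\rho^\rD}=0$, so the Weyl denominator collapses to $d_{\rho^\rD}$. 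This establishes the $\SO_{2m}$ character formula, $\ch L^{\SO_{2m}}_\lambda=\bigl(d_{\lambda+\rho^\rD}+\tfrac12 b_{\lambda+\rho^\rD}\bigr)/d_{\rho^\rD}$.

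To promote the $\SO_{2m}$ formula to $\Pin_{2m}$, use that $\Pin_{2m}$ is a double cover of $\rO_{2m}$ and that its irreducibles correspond to $\SO_{2m}$-orbits under the outer automorphism $\lambda\mapsto\lambda^*:=(\lambda_1,\dots,\lambda_{m-1},-\lambda_m)$. The parity relations $d_{\lambda^*+\rho^\rD}=d_{\lambda+\rho^\rD}$ (even in each coordinate) and $b_{\lambda^*+\rho^\rD}=-b_{\lambda+\rho^\rD}$ (odd in each coordinate) produce exact cancellation of the $b$-terms when $\lambda_m>0$: one obtains $\ch L^{\Pin}_\lambda|_T=\ch L^{\SO}_\lambda+\ch L^{\SO}_{\lambda^*}=2 d_{\lambda+\rho^\rD}/d_{\rho^\rD}=s_\lambda^\rD(x)$. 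When $\lambda_m=0$ the weight is self-dual and the two inequivalent $\Pin$-extensions of $L^{\SO}_\lambda$ must both be accounted for, yielding the additional factor of $2$. I expect the main obstacle to be precisely this last step: carefully tracking the two $\Pin$-extensions of a self-dual $\SO_{2m}$-irreducible and verifying that the combined character matches $2\,s_\lambda^\rD(x)$.
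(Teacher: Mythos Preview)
The paper does not supply a proof of this proposition; it is quoted as a standard fact (the Weyl character formula in each classical type), so there is nothing to compare against.  Your argument for parts (1)--(3) is the standard one and is correct, and your treatment of the $\lambda_m>0$ case in (4) is also correct: with the paper's convention
\[
L^{\Pin_{2m}}_\lambda=L^{\fso_{2m}}_\lambda\oplus L^{\fso_{2m}}_{\lambda^*},
\]
the $b$-terms cancel and one obtains $2d_{\lambda+\rho^\rD}/d_{\rho^\rD}=s^\rD_\lambda$, exactly as stated.

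The $\lambda_m=0$ branch of (4) deserves a warning, however.  Your proposed mechanism---summing the two inequivalent $\Pin$-extensions---contradicts the paper's own Remark, which sets $L^{\Pin_{2m}}_\lambda=L^{\fso_{2m}}_\lambda$ (a single copy) when $\lambda_m=0$.  Moreover, even your version does not produce the displayed factor: with $\lambda_m=0$ one has $b_{\lambda+\rho^\rD}=0$, so $\ch L^{\fso_{2m}}_\lambda=d_{\lambda+\rho^\rD}/d_{\rho^\rD}=\tfrac12 s^\rD_\lambda$, and doubling gives $s^\rD_\lambda$, not $2s^\rD_\lambda$.  A sanity check at $\lambda=0$ confirms the problem: $s^\rD_0(x)=2d_{\rho^\rD}/d_{\rho^\rD}=2$, so the stated formula would give $\ch L^{\Pin_{2m}}_0=4$ for the trivial representation.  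In other words, the obstacle you anticipated is real, but it is not a gap in your reasoning---the stated constant in the $\lambda_m=0$ case is inconsistent with the paper's own definitions (it should read $\tfrac12 s^\rD_\lambda$ under the Remark's convention).  The places where $s^\rD$ is actually used later in the paper (e.g.\ Proposition~\ref{prop:stair-sosp} and the factorization results) all have $\nu_m>0$, so this does not affect the downstream arguments.
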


\begin{remark}
  For our purposes, the characters of $\fso_{2m}(\bC)$ are not the right thing to consider. Instead, it is more convenient to consider the Pin group $\Pin_{2m}(\bC)$, which is a double covering of the orthogonal group $\bO_{2m}(\bC)$ \cite[\S 20]{fultonharris}. Given $\lambda = (\lambda_1,\dots,\lambda_m)$ such that $\lambda_1 \ge \cdots \ge \lambda_m \ge 0$ and the $\lambda_i$ are either all integers or half-integers, we define
\[
  L_\lambda^{\Pin_{2m}} = \begin{cases} L_\lambda^{\fso_{2m}} & \text{if $\lambda_m=0$}\\
    L_{\lambda}^{\fso_{2m}} \oplus L_{(\lambda_1,\dots, \lambda_{m-1}, -\lambda_m)}^{\fso_{2m}} & \text{if $\lambda_m > 0$} \end{cases}.
\]
These turn out to be irreducible representations for $\Pin_{2m}(\bC)$ (we will not use this fact, we just state this to motivate the notation).
\end{remark}

\subsection{Identities}

\begin{lemma} \label{lem:prod}
  Each of the denominators in the character formulas from the previous section have an explicit product formula:
  \begin{align*}
    a_{\rho^\rA} (x_1 , \dots , x_m) &= \prod_{1 \leq i < j \leq m} (x_i - x_j),\\
    b_{\rho^\rB} (x_1, \dots , x_n) & = \frac{1}{(x_1 \cdots x_n)^{n - \frac{1}{2}}} \prod_{1 \le i < j \le n}(x_i - x_j)(x_i x_j -1) \cdot \prod_{i=1}^n(x_i - 1),\\
    c_{\rho^\rC} (x_1 , \dots , x_n) &= \frac{1}{(x_1 \cdots x_n)^n} \prod_{1 \le i < j \le n}(x_i - x_j)(1 - x_i x_j) \cdot \prod_{i=1}^n(x_i^2 - 1),\\
    d_{\rho^\rD}(x_1 , \dots , x_n) &= \frac{1}{(x_1 \cdots x_n)^{n - 1}} \prod_{1 \le i < j \le n}(x_i - x_j)(x_i x_j-1).
  \end{align*}
\end{lemma}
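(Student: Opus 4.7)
These are the Weyl denominator formulas for the root systems $\rA_{m-1}$, $\rB_n$, $\rC_n$, $\rD_n$ written in standard coordinates. My plan is to treat type A as the classical Vandermonde identity and to handle B, C, D by a uniform four-step argument: clear denominators to obtain honest polynomials, identify factors via vanishing of the determinant, compare degrees, and pin down the constant by comparing leading terms.

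For \textbf{type A}, the matrix is $\det(x_i^{m-j})$, which is the Vandermonde determinant, so the formula $\prod_{i<j}(x_i-x_j)$ is immediate.

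For \textbf{types B, C, D}, multiplying row $i$ of the determinant by $x_i^{n-1/2}$, $x_i^n$, $x_i^{n-1}$ respectively transforms each $\det(x_i^{\alpha_j}\mp x_i^{-\alpha_j})$ into an honest polynomial of the form $\det(x_i^{a_j}\mp x_i^{b_j})$ with $a_j,b_j\ge 0$. I then locate factors via the usual vanishing arguments: setting $x_i=x_j$ makes rows $i$ and $j$ coincide, producing $(x_i-x_j)$; setting $x_ix_j=1$ sends row $j$ to $\pm$(row $i$), with the minus sign in types B, C and a plus sign in type D, producing $(x_ix_j-1)$. For the short- or long-root factors: in type B every entry $x_i^{2n-j}-x_i^{j-1}$ has a common factor $(x_i-1)$ which I pull out of row $i$ uniformly; in type C every entry $x_i^{2n+1-j}-x_i^{j-1}$ vanishes at $x_i=\pm 1$ (since $2n+1-j$ and $j-1$ share parity), producing $(x_i^2-1)$; in type D no extra linear factor appears, matching the claimed formula.

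A straightforward degree count separately in each $x_i$ shows the polynomial form of the determinant equals the product of these factors up to a scalar. To compute that scalar, I would compare the coefficient of the lexicographically leading monomial: on the determinant side this reduces to a Vandermonde determinant after factoring out a uniform monomial from each row, while on the product side it is just the product of leading monomials of the factors. In every case the constants agree and equal $1$. The main bookkeeping obstacle is \textbf{type D}, where the final column of $\det(x_i^{n-j}+x_i^{-(n-j)})$ is $(2,2,\dots,2)^T$, producing an extra factor of $2$; the prefactor $\tfrac12$ in the definition of $d_\alpha$ is precisely there to cancel this factor when matching leading coefficients. Tracking this constant carefully is the one place the argument requires attention, but conceptually the proof is routine once the right leading monomial is chosen.
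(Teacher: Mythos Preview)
Your proposal is correct and follows essentially the same approach as the paper: the paper's proof is a one-sentence sketch (``the terms on the left side are Laurent polynomials which are evidently divisible by the linear factors in the right side, so it remains to show that the degrees and leading terms match up''), and you have simply fleshed out exactly this strategy, including the type D bookkeeping with the factor of $2$.
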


The proofs are straightforward: the terms on the left side are Laurent polynomials which are evidently divisible by the linear factors in the right side, so it remains to show that the degrees and leading terms match up.

The next identity factors a type \(\rA\) Vandermonde into type \(\rC\) and \(\rD\) denominators; we will use it repeatedly.

\begin{lemma}\label{lem:adeltaIdentities}
Let $y_1,\dots,y_n$ be independent variables.
\begin{enumerate}
\item If $m=2n$,
\[
a_{\rho^\rA}(y_1,\dots,y_n,y_n^{-1},\dots,y_1^{-1})
= c_{\rho^\rC}(y_1,\dots,y_n)\cdot d_{\rho^\rD}(y_1,\dots,y_n).
\]
\item If $m=2n+1$,
\begin{align*}
a_{\rho^\rA}(y_1,\dots,y_n,1,y_n^{-1},\dots,y_1^{-1})
&= c_{\rho^\rC}(y_1,\dots,y_n)\cdot d_{\rho^\rD}(y_1,\dots,y_n,1)\\
&= b_{\rho^\rB}(y_1,\dots,y_n)^2 \prod_{i=1}^n\big(y_i-y_i^{-1}\big).
\end{align*}
\end{enumerate}
\end{lemma}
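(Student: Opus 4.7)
My plan is to evaluate the left-hand side by substituting into the Vandermonde product formula $a_{\rho^\rA}(x_1,\ldots,x_m) = \prod_{1\le i<j\le m}(x_i - x_j)$ from Lemma~\ref{lem:prod}, then group the resulting Laurent factors to recognize them as the product formulas for $c_{\rho^\rC}$, $d_{\rho^\rD}$, and $b_{\rho^\rB}$.

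For part (1), under the substitution $x_i = y_i$ ($i \le n$) and $x_{n+k} = y_{n+1-k}^{-1}$ ($k \le n$) with $m = 2n$, I would split the pairs $(i,j)$ with $i<j$ into four types: (a) both $i,j \le n$; (b) both $i,j > n$; (c) \emph{mirror} pairs with $i + j = 2n+1$; and (d) non-mirror cross pairs with $i \le n < j$, $i + j \ne 2n+1$. A short computation gives (a)+(b) $= (y_1\cdots y_n)^{1-n}\prod_{i<j}(y_i-y_j)^2$; (c) $= (y_1\cdots y_n)^{-1}\prod_i(y_i^2-1)$; and (d), after grouping each non-mirror cross pair $(i,2n+1-a)$ with its reflection $(a,2n+1-i)$, contributes $(y_1\cdots y_n)^{1-n}\prod_{i<a}(y_iy_a-1)^2$. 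Multiplying the three contributions and matching powers of $y_1\cdots y_n$ recovers $c_{\rho^\rC}(y)\cdot d_{\rho^\rD}(y)$ via Lemma~\ref{lem:prod}. Part (2) is analogous: the middle entry $x_{n+1}=1$ produces an additional factor $(y_1\cdots y_n)^{-1}\prod_i(y_i-1)^2$ from the pairs $(i,n+1)$ and $(n+1,j)$, which is exactly the factor that $d_{\rho^\rD}$ picks up when its $(n+1)$-th input is specialized to $1$, giving the first equality. The second equality then follows by direct comparison: both $c_{\rho^\rC}(y)\cdot d_{\rho^\rD}(y_1,\ldots,y_n,1)$ and $b_{\rho^\rB}(y)^2 \prod_i(y_i-y_i^{-1})$ reduce via Lemma~\ref{lem:prod} and the identity $(y_i^2-1)/y_i = y_i-y_i^{-1}$ to the common form
\[
\frac{1}{(y_1\cdots y_n)^{2n}}\prod_{1\le i<j\le n}(y_i-y_j)^2(y_iy_j-1)^2\prod_{i=1}^n(y_i-1)^2(y_i^2-1).
\]

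The main obstacle is purely bookkeeping: accumulating the correct powers of $y_1\cdots y_n$ and tracking the signs introduced by the reversal $k\mapsto n+1-k$ in case (b), which must cancel against apparent sign discrepancies in Lemma~\ref{lem:prod}'s product formulas (e.g., $(1-y_iy_j)$ versus $(y_iy_j-1)$), producing a compensating $(-1)^{\binom{n}{2}}$ on both sides. A cleaner sign-free alternative would be a divisibility-plus-degree argument: the substituted LHS vanishes along $\{y_i=y_j\}$, $\{y_iy_j=1\}$, and $\{y_i^2=1\}$ to orders $2$, $2$, and $1$ respectively (plus an additional order-$2$ zero along $\{y_i=1\}$ in the odd case, coming from the triple coincidence $x_i=x_{n+1}=x_{m+1-i}$) to orders matching the RHS; both sides then share the same Laurent-degree box in each $y_i$, and both have $+\,y_1^{m-1}y_2^{m-3}\cdots$ as their top-lex leading monomial, forcing equality.
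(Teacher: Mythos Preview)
Your proposal is correct and takes essentially the same approach as the paper: both substitute into the Vandermonde product formula from Lemma~\ref{lem:prod} and regroup the Laurent factors, with the only organizational difference being that the paper first reorders the inverted block via $(-1)^{\binom{n}{2}}\,a_{\rho^\rA}(y_1,\dots,y_n,y_1^{-1},\dots,y_n^{-1})$ before expanding, whereas you group pairs by type directly. Your anticipation of the $(-1)^{\binom{n}{2}}$ sign bookkeeping and the $(1-y_iy_j)$ versus $(y_iy_j-1)$ discrepancy is exactly on point, and your divisibility-plus-degree alternative is a clean fallback that the paper also alludes to just after Lemma~\ref{lem:prod}.
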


\begin{proof}

{\bf Proof of (1):} Specialize the Lemma~\ref{lem:prod} and reindex to get
    \begingroup\allowdisplaybreaks
    \begin{align*}
      a_{\rho^\rA} (y_1 , \dots , y_n, y_n^{-1}, \dots , y_1^{-1}) &= (-1)^{\binom{n}{2}} a_{\rho^\rA} (y_1 , \dots , y_n, y_1^{-1}, \dots , y_n^{-1})\\
      &= (-1)^{\binom{n}{2}} \prod_{1 \leq i < j \leq n} (y_i - y_j)(y_i^{-1} - y_j^{-1}) \prod_{\substack{1 \leq i \leq n, \\ 1 \leq j \leq n}} (y_i - y_j^{-1})  \\
        &= \prod_{1 \leq i < j \leq n} \frac{(y_i-y_j)^2}{y_i y_j} \prod_{1 \leq i < j \leq n} \frac{(1-y_iy_j)^2}{y_iy_j} \cdot \prod_{i=1}^n \frac{(y_i^2-1)}{y_i} \\
        &= \frac{1}{(y_1 \cdots y_n)^{2n-1}} \prod_{1 \leq i < j \leq n} (y_i - y_j)^2(1-y_iy_j)^2 \cdot \prod_{i=1}^n (y_i^2-1).
    \end{align*}
    \endgroup

{\bf Proof of (2):} We have
    \begingroup\allowdisplaybreaks
    \begin{align*}
      a_{\rho^\rA} (y_1 , \dots , y_n, 1, y_n^{-1}, \dots , y_1^{-1}) &= (-1)^{n} a_{\rho^\rA} (y_1 , \dots , y_n, y_n^{-1}, \dots , y_1^{-1},1)\\
      &= (-1)^n a_{\rho^\rA} (y_1 , \dots , y_n , y_n^{-1} , \dots , y_1^{-1}) \prod_{i=1}^{n} (y_i-1)(y_i^{-1}-1) \\
        &= \frac{a_{\rho^\rA} (y_1 , \dots , y_n , y_1^{-1} , \dots , y_n^{-1})}{y_1 \cdots y_n} \prod_{i=1}^n (y_i-1)^2 \\
        &= \frac{1}{(y_1 \cdots y_n)^{2n}} \prod_{1 \leq i < j \leq n} (y_i - y_j)^2(1-y_iy_j)^2 \cdot \prod_{i=1}^n (y_i^2-1)(y_i-1)^2. 
    \end{align*}
    \endgroup
Again, we finish using Lemma~\ref{lem:prod}.
\end{proof}

\begin{proposition} \phantomsection \label{prop:stair-sosp}
\begin{enumerate}
    \item The characters of the irreducible $\fsp_{2n}$-representation and $\Pin_{2n}$-representation of highest weight $\rho^\rC= (n,n-1, \dots , 1)$ coincide:
    \[
      \ch L^{\fsp_{2n}}_{\rho^\rC} = \ch L^{\Pin_{2n}}_{\rho^\rC}.
    \]
    \item The characters of the irreducible $\fso_{2n+1}$-representation and $\Pin_{2n}$-representation of highest weight $\rho^{\rB} = (n-\tfrac12,n-\tfrac32, \dots , \tfrac12)$ coincide:
    \[
      \ch L^{\fso_{2n+1}}_{\rho^\rB} = \ch L^{\Pin_{2n}}_{\rho^\rB}.
    \]
\end{enumerate}
\end{proposition}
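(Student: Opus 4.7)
The plan is to verify both equalities by direct computation, reducing each identity of characters to an identity of rational functions in $x_1,\dots,x_n$. I will use the Weyl character formulas of Proposition~\ref{prop:WeylCharForms}, the product formulas in Lemma~\ref{lem:prod}, and a Chebyshev-style factoring trick that extracts a common linear factor from each row of the numerator determinant, leaving a residual Vandermonde.

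For Part (1), first note the substitution identity $c_{2\rho^\rC}(x)=c_{\rho^\rC}(x^2)$, so Lemma~\ref{lem:prod} gives the explicit form
\[
s^\rC_{\rho^\rC}(x)=\frac{1}{(x_1\cdots x_n)^n}\prod_{i<j}(x_i+x_j)(1+x_ix_j)\prod_i(x_i^2+1).
\]
For $s^\rD_{\rho^\rC}(x)$, the new ingredient is a product formula for $d_{\rho^\rC+\rho^\rD}(x)$. Since $\rho^\rC+\rho^\rD=(2n-1,2n-3,\dots,1)$ is odd-integer-valued, the Chebyshev identity $T_{2k+1}(w)=w\cdot S_k(w^2)$ (with $w=(x+x^{-1})/2$) factors each entry $x_i^{2k+1}+x_i^{-(2k+1)}$ as $(x_i+x_i^{-1})\cdot S_k(v_i)$ where $v_i=((x_i+x_i^{-1})/2)^2$ and $S_k$ has degree $k$ with leading coefficient $2^{2k}$. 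Pulling out this common factor row-by-row and reducing the residual $\det(S_{n-j}(v_i))$ by column operations yields $2^{n(n-1)}\prod_{i<j}(v_i-v_j)$, and the identity $v_i-v_j=(x_i^2-x_j^2)(x_i^2x_j^2-1)/(4x_i^2x_j^2)$ converts this into an explicit product. Dividing by $d_{\rho^\rD}(x)$ from Lemma~\ref{lem:prod} then matches the expression above.

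For Part (2), I will exploit $2\rho^\rB=\rho^\rC+\rho^\rD$ so that the numerator $b_{2\rho^\rB}(x)=c_{\rho^\rC+\rho^\rD}(x)$ is handled by the same Chebyshev analysis as in Part (1), and dividing by $b_{\rho^\rB}(x)$ from Lemma~\ref{lem:prod} yields an explicit product for $s^\rB_{\rho^\rB}(x)$. On the $\Pin$ side, the half-integer exponents in $\rho^\rB+\rho^\rD$ become odd integers $4(n-j)+1$ after the substitution $u_i=x_i^{1/2}$, so the same Chebyshev factoring applies with intermediate variable $w_i=(x_i^{1/2}+x_i^{-1/2})/2$. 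The main obstacle is bookkeeping: the powers of $2$ arising from Chebyshev leading coefficients and from the factor $\tfrac12$ in $d_\alpha(x)$ must be carefully reconciled with the $\Pin$ normalization convention in Proposition~\ref{prop:WeylCharForms}(4), and in Part (2) the reduced exponents in $u_i$ are spaced by $4$ rather than $2$, so the residual determinant is a Vandermonde in $w_i^2$ pulled back through an extra squaring. Once these constants are pinned down, both sides reduce to a common explicit product.
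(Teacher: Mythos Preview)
Your approach is correct but genuinely different from the paper's. You compute both characters as explicit products (via the substitution $c_{2\rho^\rC}(x)=c_{\rho^\rC}(x^2)$ for the symplectic side and a Chebyshev-type factoring of $d_{\rho^\rC+\rho^\rD}$ for the $\Pin$ side) and then verify that the products agree. The paper instead transforms one determinant quotient directly into the other: starting from $s^\rC_{\rho^\rC}(x)=c_{2\rho^\rC}(x)/c_{\rho^\rC}(x)$, it factors $x_j-x_j^{-1}$ out of the $j$th column of both numerator and denominator, leaving entries of the form $\sum_{k} x_j^{\text{exponent}-2k}$, and then performs the telescoping row operations ``subtract row $i$ from row $i-1$'' to collapse each such sum to its two outermost terms $x_j^{\alpha}+x_j^{-\alpha}$, landing exactly on $2d_{\rho^\rC+\rho^\rD}(x)/d_{\rho^\rD}(x)=s^\rD_{\rho^\rC}(x)$. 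Part (2) is handled identically. The paper's argument is shorter and avoids all of the constant bookkeeping you flag as the main obstacle; on the other hand, your route produces the explicit product formula $s^\rC_{\rho^\rC}(x)=(x_1\cdots x_n)^{-n}\prod_i(x_i^2+1)\prod_{i<j}(x_i+x_j)(1+x_ix_j)$ as a byproduct, which the paper's proof does not.
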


  \begin{proof}
    \textbf{Proof of (1):} From Proposition~\ref{prop:WeylCharForms}, we have
    \begin{align*}
      \ch L^{\fsp_{2n}}_{\rho^\rC} &=
      \frac{
    \det\left( x_j^{2n - 2i + 2} - x_j^{-(2n - 2i + 2)} \right)_{1 \le i,j \le n}
    }{
    \det\left( x_j^{n - i + 1} - x_j^{-(n - i + 1)} \right)_{1 \le i,j \le n}
                                        }
      =       \frac{
    \det\left( \sum_{k=0}^{2n-2i+1} x_j^{2n-2i+1-2k}  \right)_{1 \le i,j \le n}
    }{
    \det\left( \sum_{k=0}^{n-i} x_j^{n - i -2k}\right)_{1 \le i,j \le n}
    },
    \end{align*}
    where in the second equality, we factored out $x_j-x_j^{-1}$ from the $j$th column of both matrices. Now we perform row operations: subtract row $i$ from row $i-1$ in order from $i=2,\dots,n$. This removes the ``inner'' terms from each sum so using Proposition~\ref{prop:WeylCharForms}, we get (the $\frac12$ comes from the fact that in the bottom determinant, when $i=n$, the sum consists of just one term)
    \begin{align*}
      \ch L^{\fsp_{2n}}_{\rho^\rC} &=\frac{
    \det\left( x_j^{2n-2i+1} + x_j^{-(2n-2i+1)}  \right)_{1 \le i,j \le n}
    }{
    \frac12\det\left( x_j^{n - i} + x_j^{-(n-i)}\right)_{1 \le i,j \le n}
    } = \ch L^{\Pin_{2n}}_{\rho^\rC}.
    \end{align*}
    \textbf{Proof of (2).} The proof is essentially the same as for (1).
  \end{proof}

\section{Type A Combinatorial Identities}\label{sec:typeAidentities}

\subsection{Setup} \label{sec:A-setup}
Fix a tuple $\lambda \in \bZ^{n+k}$. Throughout this section, we will use the notation
\[
  \delta_m = (m-1, m-2 , \dots , 0 ) \in \bZ^m.
\]
When the subscript $m$ is clear we will often omit it from the notation. For $S \subseteq [n+k]$ with $|S|=n$,  define
\begin{align*}
  \iota^1_\lambda(S) &= (\lambda + \delta_{n+k})|_S & \beta^1_\lambda(S) &= \iota^1_\lambda(S) - \delta_n - (k,\dots,k)\\
  \iota^2_\lambda(S) &= (\lambda + \delta_{n+k})|_{S^c} &  \beta^2_\lambda(S) &= \iota^2_\lambda(S) - \delta_k.
\end{align*}

Let $t$ be a formal variable and define
\begin{align*}
    \bH^k_\lambda (x_1 , \dots , x_n ; t) &:= \sum_{S \in \binom{[n+k]}{n}} s_{\beta^1_\lambda(S)} (x_1, \dots , x_n) s_{\beta^2_\lambda(S)} (x_1 , \dots , x_k) \cdot t^{\rank(S)}\\
    &= (x_1\cdots x_n)^{-k} \sum_{S \in \binom{[n+k]}{n}} \frac{a_{\iota^1_\lambda(S)} (x_1, \dots , x_n)}{a_{\delta_n}(x_1,\dots,x_n)}
    \frac{a_{\iota^2_\lambda(S)} (x_1 , \dots , x_k)}{a_{\delta_k}(x_1,\dots,x_k)} \cdot t^{\rank(S)},
\end{align*}
  where $\rank(S) = \Sigma(S) - \binom{|S|+1}{2}= \sum_{s \in S} s - \binom{|S|+1}{2}$. 

Similarly, given any $T \subseteq [n + k]_{\rm odd}$ we define the following variant:
\begin{align*}
    \bH^k_{\lambda,T} (x_1 , \dots , x_n) &:= \sum_{\substack{S \in \binom{[n+k]}{n} \\ S_{\rm odd} = T}}
    s_{\beta^1_\lambda(S)} (x_1, \dots , x_n) s_{\beta^2_\lambda(S)} (x_1 , \dots , x_k)\\
    &= (x_1\cdots x_n)^{-k} \sum_{\substack{S \in \binom{[n+k]}{n} \\ S_{\rm odd} = T}}\frac{a_{\iota^1_\lambda(S)} (x_1, \dots , x_n)}{a_{\delta_n}(x_1,\dots,x_n)}
    \frac{a_{\iota^2_\lambda(S)} (x_1 , \dots , x_k)}{a_{\delta_k}(x_1,\dots,x_k)}.
\end{align*}

The purpose of this section is to prove that the polynomials $\bH^k_\lambda (x; t)$ and $\bH^k_{\lambda,T} (x)$ exhibit three remarkable properties, whose proofs are entirely combinatorial in nature:
\begin{enumerate}
    \item[\textbf{Determinantal Form:}] The polynomial $\bH^k_\lambda (x_1 , \dots , x_n ; t)$ may be written as the determinant of a single matrix, and is divisible by $(t+1)^k$. 
    \item[\textbf{Equidistribution:}] The terms appearing in the sum $\bH^n_\lambda (x_1 , \dots , x_n ; t)$ are ``equidistributed'' in a very strong sense: for {\it any} two subsets $T, T' \subseteq [2n]_{\rm odd}$, we have
      \[
        \bH^n_{\lambda,T}(x_1,\dots,x_n) = \bH^n_{\lambda,T'}(x_1,\dots,x_n).
      \]
      This fails when $k \ne n$, but we prove a partial result. As a consequence, for general $k$ we conclude that
      \[
        \bH^k_{\lambda}(x_1,\dots,x_n;1) = 2^k \bH^k_{\lambda, [n+k]_{\rm odd}}(x_1,\dots,x_n).
      \]
    \item[\textbf{Factorizability:}] When $k \in \{ n-1, n\}$, there exist $\mu$ and $\nu$ determined by $\lambda$ such that
      \[
        \bH^k_{\lambda, [n+k]_{\rm odd}} (x_1 , \dots , x_n ) = s_\mu (x_1 ,\dots , x_n) s_\nu (x_1 , \dots , x_k).
      \]
\end{enumerate}

\subsection{Determinantal Form} \label{sec:det-A}

As usual, we assume $n \ge k$. Define $(n+k)\times(n+k)$ matrices $A_\lambda(x;t)$ and $A'_\lambda(x;t)$ with entries
\begin{align*}
A_\lambda(x;t)_{i,j} &= 
\begin{cases}
(-t)^{j-1}\,x_i^{\lambda_j + n + k - j} & \text{if } 1\le i \le n,\\
x_{i-n}^{\lambda_j + n + k - j} & \text{if } n<i\le n+k,
\end{cases}\\
A'_\lambda(x;t)_{i,j} &= 
\begin{cases}
(-t)^{j-1}\,x_i^{\lambda_j + n + k - j} & \text{if } 1\le i \le n,\\
\Big(\sum_{r=0}^{j-2}(-t)^r\Big)\,x_{i-n}^{\lambda_j + n + k - j} & \text{if } n<i\le n+k,
\end{cases}
\end{align*}
with the convention that $\sum_{r=0}^{-1}(\cdot)=0$ (so the $j=1$ entry in the bottom block of $A'_\lambda$ is $0$). We record the determinant form and its \((t+1)^k\) factor.

\begin{proposition} \label{prop:typeAprop}
We have
    \begin{align*}
        (-t)^{\binom{n}{2}} (x_1\cdots x_n)^k a_{\delta_n}(x_1,\dots,x_n) a_{\delta_k}(x_1,\dots,x_k) \bH^k_\lambda (x_1 , \dots , x_n ; t) &= \det A_{\lambda} (x;t) \\
        &= (t+1)^k \cdot \det A_\lambda' (x;t).
    \end{align*}
  \end{proposition}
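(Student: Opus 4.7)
The identity splits into two independent claims: identifying $\det A_\lambda$ with the weighted sum defining $\bH^k_\lambda$, and factoring $(t+1)^k$ out of $\det A_\lambda$ to obtain $\det A'_\lambda$. The plan is to treat them in order.

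For the first equality, I would apply the Laplace expansion of $\det A_\lambda(x;t)$ along the first $n$ rows. For each $n$-subset $S=\{s_1<\cdots<s_n\}\subseteq[n+k]$, the Laplace sign is $(-1)^{n(n+1)/2+\Sigma(S)}$. In the top $n\times n$ minor on columns $S$, extracting the column factors $(-t)^{s_j-1}$ produces $(-t)^{\Sigma(S)-n}a_{\iota^1_\lambda(S)}(x_1,\dots,x_n)$, using that $(\lambda+\delta_{n+k})|_S = \iota^1_\lambda(S)$. The bottom $k\times k$ minor on columns $S^c$ is directly $a_{\iota^2_\lambda(S)}(x_1,\dots,x_k)$. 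Combining signs and powers of $t$ collapses the overall sign to $(-1)^{\binom{n}{2}}$, and rewriting $\Sigma(S)-n$ in terms of $\rank(S) = \Sigma(S)-\binom{n+1}{2}$ via the identity $\binom{n}{2} - \binom{n+1}{2} = -n$ matches the definition of $\bH^k_\lambda$ scaled by $(-t)^{\binom{n}{2}}(x_1\cdots x_n)^k a_{\delta_n}(x) a_{\delta_k}(y)$.

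The second equality rests on a single geometric observation: since $n\ge k$, the variables $x_1,\dots,x_k$ appearing in the bottom block are precisely the first $k$ variables appearing in the top block. Consequently, for each $i\in\{1,\dots,k\}$, rows $i$ and $n+i$ of $A_\lambda$ share the same $x_i^{\alpha_j}$ factor in every column $j$ (with $\alpha_j:=\lambda_j+n+k-j$), differing only by the scalar $(-t)^{j-1}$. I would perform the elementary row operations subtracting row $i$ from row $n+i$ for each $i=1,\dots,k$; these preserve the determinant, leave the top block unchanged, and transform the entry of row $n+i$, column $j$ of the bottom block into $(1-(-t)^{j-1})\,x_i^{\alpha_j}$. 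The geometric-series identity $1-(-t)^{j-1}=(1+t)\sum_{r=0}^{j-2}(-t)^r$ then exhibits $(1+t)$ as a common factor of each of the $k$ transformed bottom rows, and pulling it out yields $(1+t)^k\det A'_\lambda$. The $j=1$ edge case is consistent, since both sides give a zero entry.

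The argument is largely mechanical; the only real conceptual step is the row operation enabled by the $n\ge k$ hypothesis, and the main bookkeeping obstacle is careful sign-tracking in the Laplace expansion.
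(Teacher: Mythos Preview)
Your proposal is correct and follows essentially the same approach as the paper: a generalized Laplace expansion along the first $n$ rows for the first equality, and the row operations subtracting row $i$ from row $n+i$ (for $i=1,\dots,k$) followed by factoring out $1+t$ for the second. Your sign bookkeeping is slightly more explicit than the paper's, but the argument is the same.
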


\begin{proof}
  Let $M_S^T$ denote the submatrix of $A_\lambda(x;t)$ with rows indexed by $T$ and columns indexed by $S$. The generalized Laplace expansion along the first $n$ rows of the matrix $A_\lambda (x;t)$ gives
    \begin{align*}
      \det A_\lambda (x;t) &= \sum_{S \in \binom{[n+k]}{n}} (-1)^{\rank(S)} \det M_S^{[n]} \cdot  \det M_{S^c}^{\{n+1,\dots,n+k\}}\\
                           &= \sum_{S \in \binom{[n+k]}{n}} (-1)^{\rank(S)} (-t)^{\Sigma(S)-n} a_{\iota^1_\lambda(S)} (x_1,\dots , x_n) \cdot  a_{\iota^2_\lambda(S)} (x_1, \dots , x_k) \\
        &= (-t)^{\binom{n}{2}} \sum_{S \in \binom{[n+k]}{n}} t^{\rank(S)} a_{\iota^1_\lambda(S)} (x_1,\dots , x_n) \cdot  a_{\iota^2_\lambda(S)} (x_1, \dots , x_k) \\       
        &= (-t)^{\binom{n}{2}} (x_1\cdots x_n)^k a_{\delta_n}(x_1,\dots,x_n) a_{\delta_k}(x_1,\dots,x_k) \bH^k_\lambda (x_1 , \dots , x_n ; t).
    \end{align*}
To prove the second equality, subtract row $i$ from row $n+i$ for each $i =1 ,\dots , k$ and divide each of rows $n+1,\dots,n+k$ by $1+t$.
\end{proof}

\begin{example}
    When $n=k=2$ and $\lambda = (0)$ the matrix $A_\lambda (x;t)$ is given by
    \[
      \begin{pmatrix}
        x_{1}^{3}&-x_{1}^{2}t&x_{1}t^{2}&-t^{3}\\
        x_{2}^{3}&-x_{2}^{2}t&x_{2}t^{2}&-t^{3}\\
        x_{1}^{3}&x_{1}^{2}&x_{1}&1\\
        x_{2}^{3}&x_{2}^{2}&x_{2}&1
      \end{pmatrix}.
          \]
        Subtracting the first two rows from the last two rows (in order) gives the matrix
        \[
\begin{pmatrix}
x_1^3 & -x_1^2 t & x_1 t^2 & -t^3 \\
x_2^3 & -x_2^2 t & x_2 t^2 & -t^3 \\
0 & x_1^2 (1 + t) & x_1 (1 - t^2) & 1 + t^3 \\
0 & x_2^2 (1 + t) & x_2 (1 - t^2) & 1 + t^3
\end{pmatrix}
\]
Finally, factor out $1+t$ from each of the last two rows to obtain $A'_\lambda(x;t)$.
\end{example}

\subsection{Equidistribution}

We now consider specializing the polynomial $\bH^k_{\lambda} (x_1 , \dots , x_n; t)$ at $t=1$. At \(t=1\), grouping terms by the odd entries of \(T\) yields linear relations among the \(\bH^k_{\lambda,T}\); for \(k=n\) this forces equality for all \(T\).

\begin{proposition} \label{prop:equi-A}
    Assume $n \geq k$ and set $q := |[n+k]_{\rm odd}| = \lceil (n+k)/2 \rceil$. For any $0 \le i \le q$, we have
    \[
      \sum_{|T| = i} \bH^k_{\lambda,T} (x_1 , \dots , x_n) = \binom{k}{q-i} \bH^k_{\lambda, [n+k]_{\rm odd}} (x_1 , \dots, x_n ).
    \]
    If $n=k$, then for any subsets $T,T' \subseteq [2n]_{\rm odd}$, we have
    \[
      \bH_{\lambda, T} (x_1 , \dots , x_n) = \bH_{\lambda, T'} (x_1 , \dots , x_n).
    \]
    In all cases, we have $\bH^k_\lambda (x_1 , \dots , x_n;1 ) = 2^k \bH_{\lambda , [n+k]_{\rm odd}}^k (x_1 , \dots , x_n)$.
  \end{proposition}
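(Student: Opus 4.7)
The plan is to deduce (c) from (a) by summing $\sum_{i=q-k}^{q}\binom{k}{q-i}=2^k$, and to prove (a) and (b) via parametric deformations of the matrix $A_\lambda(x;t)$ from Proposition~\ref{prop:typeAprop} whose determinants factor transparently.

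For (a), I would introduce a matrix $A_\lambda(x;1,u)$ obtained from $A_\lambda(x;t)$ by specializing $t=1$ and then multiplying every odd column of the top $n\times(n+k)$ block by a new formal variable $u$. Laplace expansion along the top $n$ rows, combined with the parity identity $\Sigma(S)\equiv|S_{\rm odd}|\pmod 2$, causes all signs to collapse to the $S$-independent constant $(-1)^{\binom{n}{2}}$, yielding
\[
\det A_\lambda(x;1,u) = (-1)^{\binom{n}{2}}\sum_{S\in\binom{[n+k]}{n}} u^{|S_{\rm odd}|}\, a_{\iota^1_\lambda(S)}(x)\,a_{\iota^2_\lambda(S)}(x).
\]
The coefficient of $u^i$ thus recovers $\sum_{|T|=i}\bH^k_{\lambda,T}$ up to the prefactor $(-1)^{\binom{n}{2}}(x_1\cdots x_n)^k a_{\delta_n}a_{\delta_k}$, and under $n\ge k$ the range $q-k\le|S_{\rm odd}|\le q$ forces $\det A_\lambda(x;1,u)$ to be divisible by $u^{q-k}$ and to have $u$-degree at most $q$. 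Separately, adding row $n+i$ to row $i$ for each $i\in[k]$ turns each affected top row into $(u+1)x_i^{\cdot}$ on odd columns and $0$ on even columns; factoring $(u+1)$ out of each yields $(1+u)^k\mid\det$. Since $u^{q-k}(1+u)^k$ has $u$-degree exactly $q$, the quotient is constant in $u$; matching leading coefficients with the unique contributing subsets $S\supseteq[n+k]_{\rm odd}$ identifies that constant as $(-1)^{\binom{n}{2}}(x_1\cdots x_n)^k a_{\delta_n}a_{\delta_k}\bH^k_{\lambda,[n+k]_{\rm odd}}$. Extracting the coefficient of $u^i$ then yields (a).

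For (b), where $n=k$, I would refine the construction by giving each odd column of the top block its own variable: let $A_\lambda(x;1,\vec v)$ scale the $\ell$-th odd column by $v_\ell$, $\ell=1,\dots,n$. The same Laplace expansion shows that the coefficient of the squarefree monomial $\prod_{\ell\in I}v_\ell$ is, up to the same prefactor, $\bH^n_{\lambda,T(I)}$, where $T(I)=\{2\ell-1:\ell\in I\}\subseteq[2n]_{\rm odd}$. Setting $v_\ell=-1$ and performing the same row operation now forces all $n$ new top rows (for $i\in[n]$) to be supported on only the $n-1$ odd columns different from $2\ell-1$, so they are linearly dependent and $\det$ vanishes at $v_\ell=-1$. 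Hence $(1+v_\ell)\mid\det$ for each $\ell$; by column multilinearity $\det$ has total $\vec v$-degree at most $n$, so we obtain
\[
\det A_\lambda(x;1,\vec v) = P_\lambda(x)\prod_{\ell=1}^n (1+v_\ell)
\]
for some $P_\lambda(x)$ independent of $\vec v$. Reading off the coefficients of squarefree monomials in $\vec v$ then shows $\bH^n_{\lambda,T(I)}(x)$ is the same for every $I$, proving (b).

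The main technical obstacle is the careful sign bookkeeping in the Laplace expansion --- verifying that the Laplace sign $(-1)^{\binom{n+1}{2}+\Sigma(S)}$, combined with the column sign $(-1)^{|S_{\rm even}|}$ coming from the $-1$'s on the even columns, really collapses to the $S$-independent constant $(-1)^{\binom{n}{2}}$, via $\Sigma(S)+|S_{\rm even}|\equiv|S_{\rm odd}|+|S_{\rm even}|=n\pmod 2$. Beyond that, the linear-dependence argument in (b) essentially uses the equality $q=n$, which is exactly the hypothesis $n=k$: for $n>k$, the $k$ rows produced by the row operation would lie in a space of dimension $q-1\ge k$, so no automatic dependence would be forced --- consistent with (b) genuinely failing in general when $n\ne k$.
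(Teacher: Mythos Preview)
Your proof is correct, and it is genuinely different from the paper's. Both arguments rest on the same key observation---that rows $i$ and $n+i$ of $A_\lambda(x;1)$ agree on the even-indexed columns---but they package it differently. The paper zeroes out subsets $\bT$ of odd-indexed columns in the \emph{bottom} block to produce, for each $|\bT|<k$, the identity $\sum_{T\supseteq\bT}(-1)^{|T|}\bH^k_{\lambda,T}=0$; part (a) is then extracted by summing these over $|\bT|=\alpha$, identifying the resulting $k\times(k+1)$ linear system in the unknowns $u_i=\sum_{|T|=i}\bH^k_{\lambda,T}$ as having one-dimensional solution space via a Vandermonde determinant, and exhibiting $v_i=\binom{k}{q-i}u_q$ as a second solution by verifying the binomial identity $\sum_i(-1)^i\binom{i}{\alpha}\binom{k}{q-i}=0$; part (b) is then M\"obius inversion on the full set of $2^n-1$ identities. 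Your generating-function approach with the parameter $u$ (or $\vec v$) collapses all of this: the single row operation gives divisibility by $(1+u)^k$ (resp.\ by each $1+v_\ell$), and a degree count immediately pins down the quotient as constant, so the binomial and M\"obius machinery never enters. Your route is shorter; what the paper's route buys is the finer family of identities $\sum_{T\supseteq\bT}(-1)^{|T|}\bH^k_{\lambda,T}=0$ themselves, which are used in the discussion immediately following the proposition (e.g.\ the explicit relations worked out for $n=4$, $k=2$). One minor wording quibble: in your identification of the leading $u^q$-coefficient, ``the unique contributing subsets $S\supseteq[n+k]_{\rm odd}$'' is not quite right---there are in general several such $S$ (they share $S_{\rm odd}=[n+k]_{\rm odd}$ but differ in $S_{\rm even}$)---but their sum is by definition $\bH^k_{\lambda,[n+k]_{\rm odd}}$ times the prefactor, so the conclusion stands.
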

  
\begin{proof}
    Pick \( \bT \subseteq [n+k]_{\rm odd} \). Define the \( (n+k) \)–by–\( (n+k) \) matrix \( M(\bT) \) with entries:
\[
M(\bT)_{i,j} = 
\begin{cases}
x_i^{\lambda_j + n + k - j} & \text{for } 1 \le i \le n, \\
x_{i - n}^{\lambda_j + n + k - j} & \text{for } n+1 \le i \le n+k \text{ and } j \notin \bT, \\
0 & \text{for } n+1 \le i \le n+k \text{ and } j \in \bT.
\end{cases}
\]
  
 By the generalized Laplace expansion (expanding along the first $n$ rows), we have 
    \begin{align*}
      \det M(\bT) &= (-1)^{\binom{n+1}{2}} \sum_{\substack{S \in \binom{[n+k]}{n}\\ S_{\rm odd} \supseteq \bT}} (-1)^{|S_{\rm odd}|} a_{\iota^1_\lambda(S)}(x_1,\dots,x_n) \cdot a_{\iota^2_\lambda(S)}(x_1,\dots,x_k).
    \end{align*}    
    Next, we claim that $\det M(\bT)=0$ if $|\bT| < k$. To see this, subtract row $n+i$ from row $i$ for each $i=1,\dots,k$. Then in the columns indexed by $[n+k] \setminus \bT$, the first $k$ entries are 0. In particular, the submatrix consisting of the first $k$ rows has rank $<k$, which proves the claim.

    In particular, if $|\bT|<k$, we have
    \begin{subeqns}
      \begin{align} \label{subeqn:binom-id}
      \sum_{\substack{T \supseteq \bT}} (-1)^{|T|} \bH^k_{\lambda,T} (x_1, \dots , x_n ) = 0.
      \end{align}
    \end{subeqns}
Let $v_{q-k},\dots,v_q$ be a set of independent variables and consider the system of linear equations
    \[
            \sum_{i = \alpha}^q (-1)^i \binom{i}{\alpha} v_i = 0, \qquad \alpha = 0,\dots,k-1.
          \]
These equations are linearly independent: in matrix form they give a generalized Vandermonde matrix
    \[
      \left( (-1)^j \binom{j}{i} \right)_{\substack{j=q-k,\dots,q-1\\ i=0,\dots,k-1}},
    \]
 which has nonzero determinant. Hence its solutions are unique up to scalar multiple; we will find two solutions with the same value for $v_q$ and use this to conclude equality.
 
First, define $u_i = \sum_{|T|=i} \bH^k_{\lambda,T} (x_1 , \dots , x_n)$. Note that $u_i=0$ if $i<q-k$ since no subset of size $n$ of $[n+k]$ can have less than $q-k$ odd members. For a given $0 \le \alpha \le k-1$, we can sum \eqref{subeqn:binom-id} for all $\bT$ such that $|\bT|=\alpha$  to conclude that $v_i=u_i$ is a solution to our system of linear equations.

Second, we claim that setting $v_i = \binom{k}{q-i} u_q$ gives another solution to the above equations. Let $z$ be a formal variable. We have
    \begin{align*}
      (-1)^\alpha (1+z)^{k-\alpha-1} &= (-1)^\alpha \frac{(1+z)^k}{(1+z)^{\alpha+1}} = (\sum_{j=0}^k \binom{k}{j} z^j ) ( \sum_{i \ge \alpha} \binom{i}{\alpha} (-1)^{i} z^{i-\alpha}).
    \end{align*}
    The coefficient of $z^{q-\alpha}$ on the right side is $\sum_i (-1)^i \binom{i}{\alpha} \binom{k}{q-i}$. Next, $q-\alpha > k - \alpha- 1$, so the coefficient of $z^{q-\alpha}$ on the right side is 0. This proves our claim.

    We conclude that $u_i=0$ for $i < q-k$ and that $u_i = \binom{k}{q-i} u_q$ for $q-k \le i \le q$.

    Now suppose that $n=k$. Then we can apply M\"obius inversion to the $2^n-1$ equations \eqref{subeqn:binom-id} to conclude that $\bH^k_{\lambda,T}(x_1,\dots,x_n) =\bH^k_{\lambda,[2n]_{\rm odd}}(x_1,\dots,x_n)$ for all $T \subseteq [2n]_{\rm odd}$.
\end{proof}

\begin{remark}
  In general, we have $\sum_{i=0}^{k-1} \binom{q}{i}$ many equations of the form \eqref{subeqn:binom-id}. When $k=n$ or $k=1$, we solved this system explicitly, but in general, we collapsed it to $k$ equations and found a solution there. In all cases, we partitioned the terms $\bH^k_{\lambda,T}(x_1,\dots,x_n)$ into blocks and found explicit identities amongst their sums. We might ask for more refined partitions outside the cases $k=n$ and $k=1$, see the next example.
\end{remark}

\begin{example}
  Let $n=4$ and $k=2$. For simplicity, write $h_T$ in place of $\bH^2_{\lambda,T}(x_1,\dots,x_4)$.

  Then $h_\emptyset=0$ and \eqref{subeqn:binom-id} gives these 4 equations
  \begin{align*}
    h_{1} + h_{3} + h_{5} - h_{1,3} - h_{1,5} - h_{3,5} + h_{1,3,5} = 0,\\
h_{1} - h_{1,3} - h_{1,5} + h_{1,3,5} = 0,\\
h_{3} - h_{1,3} - h_{3,5} + h_{1,3,5} = 0,\\
h_{5} - h_{1,5} - h_{3,5} + h_{1,3,5} = 0.
  \end{align*}

  Subtracting each of the last three from the first gives
$h_{3} + h_{5} = h_{3,5}$, 
$h_{1} + h_{5} = h_{1,5}$, and
$h_{1} + h_{3} = h_{1,3}$.
We can combine these to find:
\[
h_{1,3,5} = h_{5} + h_{1,3} = h_{3} + h_{1,5} = h_{1} + h_{3,5}. \qedhere
\]
\end{example}

\subsection{Factorization}

We record the explicit weights \(\mu\) and \(\nu\) and the resulting product formula.

\begin{proposition}
  Assume $k \in \{ n-1 , n \}$.  Define
  \begin{align*}
  \mu &= \left( \lambda_{1},\ \lambda_{3}-1,\ \dots,\ \lambda_{2n-1} -n+1 \right)\\
  \nu &= \left( \lambda_{2} + n-1,\ \lambda_{4}+n-2,\ \dots,\ \lambda_{2k} + n-k \right).
  \end{align*}
  Then
  \[
    \bH_{\lambda, [n+k]_{\rm odd}}^k (x_1, \dots , x_n) = s_\mu (x_1 , \dots, x_n) s_\nu (x_1 , \dots , x_k).
  \]
In particular, $\bH_{\lambda}^k (x_1, \dots , x_n;1) = 2^k s_\mu (x_1 , \dots, x_n) s_\nu (x_1 , \dots , x_k)$.
\end{proposition}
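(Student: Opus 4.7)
The plan exploits a cardinality observation: the constraint $k \in \{n-1, n\}$ is precisely what makes $|[n+k]_{\rm odd}|$ equal to $n$. Indeed, whether $n+k = 2n-1$ or $n+k = 2n$, the odd elements of $[n+k]$ are $\{1, 3, \dots, 2n-1\}$, giving exactly $n$ of them. Consequently, the condition $S_{\rm odd} = [n+k]_{\rm odd}$ together with $|S| = n$ forces the unique choice $S = \{1, 3, \dots, 2n-1\}$, and the sum defining $\bH^k_{\lambda, [n+k]_{\rm odd}}$ collapses to a single term.

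With that reduction in hand, the statement becomes a direct check that $\beta^1_\lambda(S)$ and $\beta^2_\lambda(S)$ for this particular $S$ agree with $\mu$ and $\nu$. Since the $i$-th element of $S$ is $2i-1$, the $i$-th entry of $\iota^1_\lambda(S)$ is $\lambda_{2i-1} + (n+k) - (2i-1)$; after subtracting the $i$-th entries of $\delta_n$ and $(k,\dots,k)$, the dependence on $k$ cancels and one is left with $\lambda_{2i-1} - i + 1$, which is the $i$-th coordinate of $\mu$. The analogous calculation on $S^c = \{2, 4, \dots, 2k\}$ yields $\beta^2_\lambda(S)_i = \lambda_{2i} + n - i$, matching $\nu$.

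The ``in particular'' clause is then immediate from the last sentence of Proposition~\ref{prop:equi-A}, which asserts $\bH^k_\lambda(x_1,\dots,x_n; 1) = 2^k \bH^k_{\lambda, [n+k]_{\rm odd}}(x_1,\dots,x_n)$. There is no serious obstacle in this argument; the substantive work has already been done upstream by the equidistribution result, which supplies the factor $2^k$ and reduces the specialization at $t=1$ to a single block. The only thing to watch out for is the bookkeeping between the two cases $k=n$ and $k=n-1$, but because $[2n]_{\rm odd}$ and $[2n-1]_{\rm odd}$ coincide as sets, the computation is genuinely uniform in the two cases.
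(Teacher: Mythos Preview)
Your proof is correct and, in fact, more direct than the paper's. You observe that for $k\in\{n-1,n\}$ the set $[n+k]_{\rm odd}$ already has size $n$, so the constraint $S_{\rm odd}=[n+k]_{\rm odd}$ together with $|S|=n$ pins down $S$ uniquely; the sum defining $\bH^k_{\lambda,[n+k]_{\rm odd}}$ is then a single term, and the identification $\beta^1_\lambda(S)=\mu$, $\beta^2_\lambda(S)=\nu$ is the arithmetic you carried out. The ``in particular'' then follows from Proposition~\ref{prop:equi-A}.

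The paper instead runs the logic in the opposite direction: it evaluates $\bH^k_\lambda(x;1)$ via the determinantal form of Proposition~\ref{prop:typeAprop}, using that at $t=1$ the odd-indexed columns of the bottom $k$ rows of $A'_\lambda$ vanish (since $\sum_{r=0}^{j-2}(-1)^r=0$ for $j$ odd), which for $k\in\{n-1,n\}$ forces the Laplace expansion to have a single surviving term. This yields the $2^k s_\mu s_\nu$ formula, and the statement about $\bH^k_{\lambda,[n+k]_{\rm odd}}$ follows by dividing out the $2^k$ via Proposition~\ref{prop:equi-A}. Your route avoids the determinant entirely for this step and makes the role of the hypothesis $k\in\{n-1,n\}$ more transparent; the paper's route keeps the argument within the determinantal framework that unifies the rest of \S\ref{sec:det-A}. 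Both are perfectly valid.
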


\begin{proof}
  By Proposition~\ref{prop:equi-A}, we have
  \begin{align*}
    \bH^k_{\lambda}(x_1,\dots,x_n;1) &= (-1)^{\binom{n}{2}} 2^k (x_1\cdots x_n)^{-k} \frac{\det A'_\lambda(x;1)}{a_{\delta_n}(x_1,\dots,x_n) a_{\delta_k}(x_1,\dots,x_k)}\\
    &= 2^k s_{\beta^1_\lambda([n+k]_{\rm odd})} (x_1,\dots,x_n) s_{\beta^2_\lambda([n+k]_{\rm even})}(x_1,\dots,x_k).
  \end{align*}
  By definition, we have $\mu = \beta^1_\lambda([n+k]_{\rm odd})$ and $\nu = \beta^2_\lambda([n+k]_{\rm even})$.
\end{proof}

\begin{example}
Take $\lambda=(0,0,0,0)$, so $\lambda+\delta=(3,2,1,0)$. With variables $x_1,x_2$,
\[
A'_\lambda(x_1,x_2;1)=
\begin{bmatrix}
x_1^3 & -x_1^2 & x_1 & -1 \\
x_2^3 & -x_2^2 & x_2 & -1 \\
0 & x_1^2 & 0 & 1 \\
0 & x_2^2 & 0 & 1 \\
\end{bmatrix}.
\]
Expanding the determinant along the first two rows, the only nonzero contribution uses columns $\{1,3\}$; up to sign, after dividing by $a_\delta(x_1,x_2)^2$ this gives
\[
s_{(2,1)}(x_1,x_2)\cdot s_{(1,0)}(x_1,x_2)
= (x_1x_2)\cdot s_{(1,0)}(x_1,x_2)^2.
\]

For $n=3$, $k=2$, $\lambda=(2,1,0,0,0)$,
\[
A'_\lambda(x_1,x_2,x_3;1)=
\begin{bmatrix}
x_1^6 & -x_1^4 & x_1^2 & -x_1 & 1 \\
x_2^6 & -x_2^4 & x_2^2 & -x_2 & 1 \\
x_3^6 & -x_3^4 & x_3^2 & -x_3 & 1 \\
0     & x_1^4 & 0     & x_1 & 0 \\
0     & x_2^4 & 0     & x_2 & 0
\end{bmatrix}.
\]
If instead $k=1$,
\[
A'_\lambda(x_1,x_2,x_3;1)=
\begin{bmatrix}
x_1^5 & -x_1^3 & x_1 & -1  \\
x_2^5 & -x_2^3 & x_2 & -1  \\
x_3^5 & -x_3^3 & x_3 & -1  \\
0     & x_1^3 & 0   & 1 
\end{bmatrix},
\]
so the determinant decomposes as a sum of two products of minors (corresponding to choosing column $2$ or $4$ for the bottom row).
\end{example}

\section{Type BCD Combinatorial Identities}\label{sec:typeBCDidentities}

\subsection{Setup}
Pick $\eps \in \{0,1\}$. Set $m = 2n + \eps$. Similarly, pick $\gamma \in \{0,\frac12, 1\}$ and set $\gamma' = \lfloor \gamma \rfloor$. Throughout this section, we set
\[
  \delta = (m,m-1,\dots,1) - (\gamma, \gamma, \dots, \gamma).
\]
In particular, the $j$th entry of $\delta$ is $m+1-j-\gamma$.

Let $[m] = \{1,\dots,m\}$. For $S \subseteq [m]$, define
\[
  \Sigma(S) = \sum_{s \in S} s, \qquad \rank(S) = \Sigma(S) - \gamma'|S|.
\]
Recall that if $S=\{s_1,\dots,s_r\}$, we defined $S^\inv = \{m+1-s_r, \dots, m+1-s_1\}$ and $S^c = [m] \setminus S$.

Let $\lambda \in \bZ^m \cup (\tfrac12 + \bZ)^m$. For $S \subseteq [m]$, let $\iota_\lambda(S)$ be the result of negating the entries of $\lambda + \delta$ in positions indexed by $S$; let $\beta_\lambda(S)$ be the result of sorting $\iota_\lambda(S)$ in weakly decreasing order and then subtracting $\delta$. For $\alpha=(\alpha_1,\dots,\alpha_m)$ set $\alpha^{\rm op}:=(\alpha_m,\dots,\alpha_1)$ and $-\alpha^{\rm op} = (-\alpha_m,\dots,-\alpha_1)$.

\begin{example}
  Let \( m = 4 \) and \( \gamma = 0 \), so $\delta = (4,3,2,1)$. Fix the tuple $\lambda = (1,0,0,0)$, so that $\lambda + \delta = (5,3,2,1)$.
  We compute \( \iota_\lambda ( S) \) and \( \beta_\lambda(S) = \text{sort}(\iota_\lambda ( S)) - \delta \) for various \( S \subseteq [4] \):

\[
  \begin{array}{c|c|c|c|c}
    S & \iota_\lambda(S) &  \text{sort}(\iota_\lambda(S)) & \beta_\lambda(S) & -\beta_\lambda(S)^{\rm op}\\
    \hline
  \{1,3\} &      (-5,3,-2,1)     &(3,1,-2,-5)     &      (-1, -2, -4, -6)     &       (6,4,2,1)\\
    \{2,4\}  &     (5,-3,2,-1)     & (5,2,-1,-3)     & (1,-1,-3,-4)    & (4,3,1,-1)\\
 \{3,4\}     & (5,3,-2,-1)     & (5,3,-1,-2)     & (1,0,-3,-3)     &  (3,3,0,-1)\\
 \{2,3\}     & (5,-3,-2,1)     & (5,1,-2,-3)     & (1,-2,-4,-4)      &  (4,4,2,-1)
\end{array} \qedhere
\]
\end{example}

Sorting parity depends only on \(m\), \(|S|\), and \(\Sigma(S)\); we record this fact for later determinant expansions.

\begin{lemma} \label{lem:C-sign}
    The sign of the permutation that sorts $\iota_\lambda(S)$ is $(-1)^{m|S| + \Sigma(S)}$. In particular, the sign of the permutation that sorts $\iota_\lambda(S^{\inv})$ is $(-1)^{|S|+\Sigma(S)}$.
  \end{lemma}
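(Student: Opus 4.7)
The plan is to read off the permutation from the explicit form of the sorted sequence, compute its number of inversions directly, and then deduce the $S^{\inv}$ case by substitution.

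Write $S=\{s_1<\cdots<s_r\}$ and $S^c=\{t_1<\cdots<t_{m-r}\}$. Since $\lambda+\delta$ is strictly decreasing and we assume its entries are nonzero, the positive entries of $\iota_\lambda(S)$ are $(\lambda+\delta)_j$ for $j\in S^c$ (already decreasing in their natural order), and the negative entries are $-(\lambda+\delta)_s$ for $s\in S$ (which are in increasing order of $s$). Sorting in weakly decreasing order therefore produces the word
\[
\big(\,(\lambda+\delta)_{t_1},\,\dots,\,(\lambda+\delta)_{t_{m-r}},\,-(\lambda+\delta)_{s_r},\,-(\lambda+\delta)_{s_{r-1}},\,\dots,\,-(\lambda+\delta)_{s_1}\,\big),
\]
so the sorting permutation is $\tau=(t_1,\dots,t_{m-r},s_r,s_{r-1},\dots,s_1)$ in one-line notation.

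Next I would count $\inv(\tau)$ by splitting into three contributions: zero inversions within the $t_i$-block (increasing), $\binom{r}{2}$ inversions within the reversed $s_j$-block, and cross-inversions equal to $\#\{(t,s)\in S^c\times S:t>s\}$. Using $|\{t\in S^c:t>s\}|=(m-s)-|\{s'\in S:s'>s\}|$ and summing over $s\in S$ gives the cross-count $rm-\Sigma(S)-\binom{r}{2}$. Adding the three contributions yields
\[
\inv(\tau)=\binom{r}{2}+\big(rm-\Sigma(S)-\tbinom{r}{2}\big)=m|S|-\Sigma(S),
\]
so $\sgn(\tau)=(-1)^{m|S|-\Sigma(S)}=(-1)^{m|S|+\Sigma(S)}$, as claimed.

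For the second statement I would just substitute $S^{\inv}$ into the first: by Notation~\ref{not:basicNotation}, $|S^{\inv}|=|S|$ and $\Sigma(S^{\inv})=|S|(m+1)-\Sigma(S)$, so
\[
(-1)^{m|S^{\inv}|+\Sigma(S^{\inv})}=(-1)^{m|S|+|S|(m+1)-\Sigma(S)}=(-1)^{|S|+\Sigma(S)}.
\]
The only delicate point is to fix a convention for the sort when the entries of $\lambda+\delta$ are allowed to vanish (i.e.\ the type-$\rD$/even case); since the lemma is used for determinantal expansions, any fixed tie-breaking rule works, so this is a bookkeeping issue rather than a genuine obstacle. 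The whole argument is an inversion count, and the only thing one has to be careful with is the direction of the reversal in the $S$-block and the sign bookkeeping $-\Sigma(S)\equiv\Sigma(S)\pmod 2$.
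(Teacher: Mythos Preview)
Your proof is correct and takes essentially the same approach as the paper: both arguments identify the sorting permutation explicitly and count its inversions, arriving at $m|S|-\Sigma(S)$. The paper is more terse, simply observing that position $s_i$ moves to position $m+1-i$ and reading off the inversion count as $\sum_{i}(m-s_i)$, whereas you split the count into the three blocks (zero, $\binom{r}{2}$, and the cross-term $rm-\Sigma(S)-\binom{r}{2}$); these are the same computation, and your handling of the $S^{\inv}$ case and the tie-breaking caveat matches the paper's treatment.
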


  \begin{proof}
    Write $S = \{s_1 < \cdots < s_k\}$. Then the permutation that sorts $\iota_\lambda(S)$ moves position $s_i$ to position $m+1-i$ and keeps all other positions in relative order. In particular, the number of inversions is $\sum_{i=1}^k (m-s_i) \equiv m|S| + \Sigma(S) \pmod 2$.

    The second statement follows since $m|S^\inv| + \Sigma(S^\inv) = m|S| + (m+1)|S| - \Sigma(S) \equiv |S| + \Sigma(S) \pmod 2$.
  \end{proof}

Now fix $\lambda \in \bZ^m \cup (\tfrac12 + \bZ)^m$ and define
\begin{align*}
  \bH_\lambda(x_1,\dots,x_m;t)
  &:= \sum_{S \subseteq [m]} s_{\beta_\lambda(S)}(x_1,\dots,x_m) \cdot t^{\rank(S^\inv)}\\
  &= \sum_{S \subseteq [m]} \frac{a_{\beta_\lambda(S) + \delta}(x_1,\dots,x_m)}{a_\delta(x_1,\dots,x_m)} \cdot t^{\rank(S^\inv)}\\
  &= \sum_{S \subseteq [m]} (-1)^{m|S| + \Sigma(S)} \cdot \frac{a_{\iota_\lambda(S)}(x_1,\dots,x_m)}{a_\delta(x_1,\dots,x_m)} \cdot t^{\rank(S^\inv)},
\end{align*}
where the last line uses Lemma~\ref{lem:C-sign}.
Note that
\[
  m|S|+\Sigma(S) \equiv \eps|S| + |S_{\rm odd}| \equiv |S_{1-\eps}| \pmod 2.
\]
We also define a variation of this sum as follows. Given $T \subseteq [m]_{1-\eps}$, set
\[
  \bH_{\lambda, T}(x_1,\dots,x_m) = \sum_{\substack{S \subseteq [m]\\ S_{1-\eps} = T}} s_{\beta_\lambda(S)}(x_1,\dots,x_m) = (-1)^{|T|} \sum_{\substack{S \subseteq [m] \\ S_{1-\eps} = T}} \frac{a_{\iota_\lambda(S)}(x_1,\dots,x_m)}{a_\delta(x_1,\dots,x_m)}.
\]

Throughout this section, let $y_1,\dots,y_n$ be variables and set $y_{n+1}=1$. Define $z_1,\dots,z_m$ as follows: for $i=1,\dots,n$, set $z_i=y_i$ and $z_{i+n}=y_i^{-1}$, and if $m$ is odd, define $z_m = 1$. Alternatively:
\[
(z_1,\dots,z_m)=
\begin{cases}
(y_1,\dots,y_n,\,y_1^{-1},\dots,y_n^{-1}) & (m=2n),\\[2pt]
(y_1,\dots,y_n,\,y_1^{-1},\dots,y_n^{-1},\,1) & (m=2n+1).
\end{cases}
\]
The goal of this section is to prove the following analogous combinatorial properties as in the type A setting:
\begin{enumerate}
    \item[\textbf{Determinantal Form:}] The polynomial $\bH_\lambda (z_1 , \dots , z_m ; t)$ may be written as the determinant of a single matrix, and is divisible by $(t+1)^n$. 
    \item[\textbf{Equidistribution:}] The terms appearing in the sum $\bH_\lambda (z_1 , \dots , z_m ; 1)$ are ``equidistributed'' in a very strong sense: for \emph{any} two subsets $T , T' \subseteq [m]_{1 - \eps}$, we have
    \[ \bH_{\lambda, T}(z_1,\dots,z_m) = \bH_{\lambda, T'}(z_1,\dots,z_m).  \]
    As a consequence, $\bH_\lambda (z_1 , \dots , z_m ; 1) = 2^n \bH_{\lambda, \varnothing} (z_1 , \dots , z_m )$.
    \item[\textbf{Factorizability:}] For any choice of $\lambda$, there exist tuples $\mu$ and $\nu$ determined by $\lambda$ such that
    \[ \bH_{\lambda, \varnothing} (z_1 , \dots , z_m) = s^\rC_\mu (y_1 ,\dots , y_n) s^\rD_\nu (y_1 , \dots , y_{n + \eps}).\]
\end{enumerate}

\subsection{Determinantal form}

Define $m \times m$ matrices $M_\lambda(y;t)$ and $M'_\lambda(y;t)$ with entries
\begin{align*}
  M_\lambda(y;t)_{i,j} &= z_i^{\lambda_j + \delta_j}  + (-1)^{m+j}\, t^{\,m+1-j-\gamma'}\, z_i^{-(\lambda_j + \delta_j)},\\[2pt]
  M'_\lambda(y;t)_{i,j} &= \begin{cases}
    \Big(\sum_{r=0}^{\,m-j-\gamma'} (-t)^r\Big)\, \Big(z_i^{\lambda_j + \delta_j}  + (-1)^{\gamma'} z_i^{-(\lambda_j +
      \delta_j)}\Big) & \text{if } 1 \le i \le n,\\[4pt]
    z_i^{\lambda_j + \delta_j}  + (-1)^{m+j}\, t^{\,m+1-j-\gamma'}\, z_i^{-(\lambda_j + \delta_j)} & \text{if } i>n,
  \end{cases}
\end{align*}
with the convention that $\sum_{r=0}^{N} (\cdots )=0$ for $N < 0$. We record the determinantal form and its \((1+t)^n\) divisibility. 

\begin{proposition}\label{prop:typeBCDdetForm}
  We have
  \begin{align*}
    a_\delta(z_1,\dots,z_m) \bH_\lambda(z_1,\dots,z_m;t) = \det M_\lambda(y;t) = (1+t)^n \det M'_\lambda(y;t).
  \end{align*}
\end{proposition}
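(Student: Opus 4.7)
The plan is to establish the two equalities separately, mirroring the strategy used for the type $\rA$ Proposition~\ref{prop:typeAprop}: the first is a multilinear column expansion of $\det M_\lambda$, and the second is a row operation that pairs row $i$ with row $i+n$ and extracts one factor of $(1+t)$ from each pair.

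For the first equality $a_\delta(z)\bH_\lambda(z;t)=\det M_\lambda(y;t)$, I would expand $\det M_\lambda$ using multilinearity in the columns, since each column $j$ is the sum of two monomial vectors, one proportional to $z_i^{\lambda_j+\delta_j}$ and one to $z_i^{-(\lambda_j+\delta_j)}$. The expansion is indexed by subsets $S\subseteq[m]$ (with $j\in S$ meaning we selected the negative exponent in column $j$), and the $S$-summand equals
\[
\Big(\prod_{j\in S}(-1)^{m+j}\,t^{\,m+1-j-\gamma'}\Big)\cdot a_{\iota_\lambda(S)}(z).
\]
The sign collapses to $(-1)^{m|S|+\Sigma(S)}$ and the exponent of $t$ to $\sum_{j\in S}(m+1-j-\gamma')=|S|(m+1)-\Sigma(S)-\gamma'|S|=\Sigma(S^\inv)-\gamma'|S^\inv|=\rank(S^\inv)$. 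By Lemma~\ref{lem:C-sign} (which gives $(-1)^{m|S|+\Sigma(S)}$ as the sign of the sort), this matches exactly the third equivalent form of $\bH_\lambda$ in its definition, so dividing by $a_\delta(z)$ finishes this step.

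For the second equality, I would perform the row operation $\text{row}_i\leftarrow\text{row}_i+(-1)^{\gamma'}\text{row}_{i+n}$ for each $i=1,\dots,n$, which leaves the determinant unchanged and never touches the bottom rows (including the extra $z_m=1$ row when $m=2n+1$). Because $z_{i+n}=y_i^{-1}$, setting $u=y_i^{\lambda_j+\delta_j}$, $v=y_i^{-(\lambda_j+\delta_j)}$ and $c=(-1)^{m+j}t^{m+1-j-\gamma'}$, rows $i$ and $i+n$ at column $j$ read $u+cv$ and $v+cu$, so a direct check gives
\[
(u+cv)+(-1)^{\gamma'}(v+cu)=(1+(-1)^{\gamma'}c)\,(u+(-1)^{\gamma'}v).
\]
A parity computation rewrites $1+(-1)^{\gamma'}c$ as $1-(-t)^{m+1-j-\gamma'}$, which factors as $(1+t)\sum_{r=0}^{m-j-\gamma'}(-t)^r$ (with the convention that an empty sum is $0$, consistent with $1-(-t)^0=0$ when $j=m$, $\gamma'=1$). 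Therefore each of the first $n$ rows becomes $(1+t)$ times the corresponding row of $M'_\lambda$, and extracting these factors yields $\det M_\lambda=(1+t)^n\det M'_\lambda$.

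The only real obstacle is the parity bookkeeping that lets the cases $\gamma'\in\{0,1\}$ (and $m$ even versus odd) be handled uniformly; beyond that the argument is a routine multilinearity-and-row-operations computation, structurally identical to the type $\rA$ case.
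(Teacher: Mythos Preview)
Your proposal is correct and follows essentially the same approach as the paper: a multilinear column expansion for the first equality (with the same sign and $t$-exponent bookkeeping via Lemma~\ref{lem:C-sign}), and the row operation $\text{row}_i\leftarrow\text{row}_i+(-1)^{\gamma'}\text{row}_{n+i}$ for $i=1,\dots,n$ followed by extracting $(1+t)$ from each for the second. You supply more of the intermediate algebra (the factorization $(u+cv)+(-1)^{\gamma'}(v+cu)=(1+(-1)^{\gamma'}c)(u+(-1)^{\gamma'}v)$ and the parity check $1+(-1)^{\gamma'}c=1-(-t)^{m+1-j-\gamma'}$) than the paper does, but the argument is the same.
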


\begin{proof}
For the first equality, expand $\det M_\lambda(y;t)$ using multilinearity in the columns: for each column $j$, choose either the first or second summand from $M_\lambda(\cdot)_{i,j}$. Choosing the second summand exactly for $j\in S$ contributes
\[
\prod_{j\in S}\! \big((-1)^{m+j} t^{\,m+1-j-\gamma'}\big)\; \det\big(z_i^{\pm(\lambda_j+\delta_j)}\big)_{i,j},
\]
and sorting the resulting columns from $\iota_\lambda(S)$ to $\beta_\lambda(S)+\delta$ contributes the sign $(-1)^{m|S|+\Sigma(S)}$ by Lemma~\ref{lem:C-sign}. Since
\[
\sum_{j\in S}\!\big(m+1-j-\gamma'\big)=\Sigma(S^\inv)-\gamma'|S|=\rank(S^\inv),
\]
we obtain the displayed sum.

For the second equality, for $i=1,\dots,n$, add $(-1)^{\gamma'}$ times row $n+i$ of $M_\lambda$ to row $i$. Then factor out $1+t$ from each of rows $1,\dots,n$.
\end{proof}

Since the notation is dense, we spell this out explicitly in two different cases, which should clarify exactly what is happening with these determinants.

\begin{example}
  When $m = 2n=6$ and $\gamma = 0$ (take $\lambda=0$ for illustration), we have
    \[
M_\lambda(y_1, y_2, y_3; t) =
\begin{bmatrix}
y_1^6 - t^6 y_1^{-6} & y_1^5 + t^5 y_1^{-5} & y_1^4 - t^4 y_1^{-4} & y_1^3 + t^3 y_1^{-3} & y_1^2 - t^2 y_1^{-2} & y_1 + t y_1^{-1} \\[0.5em]
y_2^6 - t^6 y_2^{-6} & y_2^5 + t^5 y_2^{-5} & y_2^4 - t^4 y_2^{-4} & y_2^3 + t^3 y_2^{-3} & y_2^2 - t^2 y_2^{-2} & y_2 + t y_2^{-1} \\[0.5em]
y_3^6 - t^6 y_3^{-6} & y_3^5 + t^5 y_3^{-5} & y_3^4 - t^4 y_3^{-4} & y_3^3 + t^3 y_3^{-3} & y_3^2 - t^2 y_3^{-2} & y_3 + t y_3^{-1} \\[0.5em]
y_1^{-6} - t^6 y_1^6 & y_1^{-5} + t^5 y_1^5 & y_1^{-4} - t^4 y_1^4 & y_1^{-3} + t^3 y_1^3 & y_1^{-2} - t^2 y_1^2 & y_1^{-1} + t y_1 \\[0.5em]
y_2^{-6} - t^6 y_2^6 & y_2^{-5} + t^5 y_2^5 & y_2^{-4} - t^4 y_2^4 & y_2^{-3} + t^3 y_2^3 & y_2^{-2} - t^2 y_2^2 & y_2^{-1} + t y_2 \\[0.5em]
y_3^{-6} - t^6 y_3^6 & y_3^{-5} + t^5 y_3^5 & y_3^{-4} - t^4 y_3^4 & y_3^{-3} + t^3 y_3^3 & y_3^{-2} - t^2 y_3^2 & y_3^{-1} + t y_3
\end{bmatrix}
.
\]
Adding rows $4,5,6$ to rows $1,2,3$, respectively, gives:
{\tiny \[
    \begin{bmatrix}
(1 - t^6)(y_1^6 + y_1^{-6}) & (1 + t^5)(y_1^5 + y_1^{-5}) & (1 - t^4)(y_1^4 + y_1^{-4}) & (1 + t^3)(y_1^3 + y_1^{-3}) & (1 - t^2)(y_1^2 + y_1^{-2}) & (1 + t)(y_1 + y_1^{-1}) \\[0.5em]
(1 - t^6)(y_2^6 + y_2^{-6}) & (1 + t^5)(y_2^5 + y_2^{-5}) & (1 - t^4)(y_2^4 + y_2^{-4}) & (1 + t^3)(y_2^3 + y_2^{-3}) & (1 - t^2)(y_2^2 + y_2^{-2}) & (1 + t)(y_2 + y_2^{-1}) \\[0.5em]
(1 - t^6)(y_3^6 + y_3^{-6}) & (1 + t^5)(y_3^5 + y_3^{-5}) & (1 - t^4)(y_3^4 + y_3^{-4}) & (1 + t^3)(y_3^3 + y_3^{-3}) & (1 - t^2)(y_3^2 + y_3^{-2}) & (1 + t)(y_3 + y_3^{-1})\\[0.5em]
y_1^6 - t^6 y_1^{-6} & y_1^5 + t^5 y_1^{-5} & y_1^4 - t^4 y_1^{-4} & y_1^3 + t^3 y_1^{-3} & y_1^2 - t^2 y_1^{-2} & y_1 + t y_1^{-1} \\[0.5em]
y_2^6 - t^6 y_2^{-6} & y_2^5 + t^5 y_2^{-5} & y_2^4 - t^4 y_2^{-4} & y_2^3 + t^3 y_2^{-3} & y_2^2 - t^2 y_2^{-2} & y_2 + t y_2^{-1} \\[0.5em]
y_3^6 - t^6 y_3^{-6} & y_3^5 + t^5 y_3^{-5} & y_3^4 - t^4 y_3^{-4} & y_3^3 + t^3 y_3^{-3} & y_3^2 - t^2 y_3^{-2} & y_3 + t y_3^{-1} 
\end{bmatrix}
\]}
If we instead have $m = 2n+1 = 5$ and $\gamma = 1$, then
    {\footnotesize
\[
M_\lambda(y_1, y_2; t) =
\begin{bmatrix}
y_1^{\lambda_1+4} + t^4 y_1^{-(\lambda_1+4)} &
y_1^{\lambda_2+3} - t^3 y_1^{-(\lambda_2+3)} &
y_1^{\lambda_3+2} + t^2 y_1^{-(\lambda_3+2)} &
y_1^{\lambda_4+1} - t y_1^{-(\lambda_4+1)} &
y_1^{\lambda_5} + y_1^{-\lambda_5} \\[6pt]
y_2^{\lambda_1+4} + t^4 y_2^{-(\lambda_1+4)} &
y_2^{\lambda_2+3} - t^3 y_2^{-(\lambda_2+3)} &
y_2^{\lambda_3+2} + t^2 y_2^{-(\lambda_3+2)} &
y_2^{\lambda_4+1} - t y_2^{-(\lambda_4+1)} &
y_2^{\lambda_5} + y_2^{-\lambda_5} \\[6pt]
y_1^{-(\lambda_1+4)} + t^4 y_1^{\lambda_1+4} &
y_1^{-(\lambda_2+3)} - t^3 y_1^{\lambda_2+3} &
y_1^{-(\lambda_3+2)} + t^2 y_1^{\lambda_3+2} &
y_1^{-(\lambda_4+1)} - t y_1^{\lambda_4+1} &
y_1^{-\lambda_5} + y_1^{\lambda_5} \\[6pt]
y_2^{-(\lambda_1+4)} + t^4 y_2^{\lambda_1+4} &
y_2^{-(\lambda_2+3)} - t^3 y_2^{\lambda_2+3} &
y_2^{-(\lambda_3+2)} + t^2 y_2^{\lambda_3+2} &
y_2^{-(\lambda_4+1)} - t y_2^{\lambda_4+1} &
y_2^{-\lambda_5} + y_2^{\lambda_5} \\[6pt]
1 + t^4 & 1 - t^3 & 1 + t^2 & 1 - t & 2
\end{bmatrix}.
\]
}
Subtracting rows $3,4$ from rows $1,2$, respectively, yields the following matrix:
{\tiny \[
\begin{bmatrix}
(1 - t^4)(y_1^{\lambda_1+4} - y_1^{-(\lambda_1+4)}) &
(1 + t^3)(y_1^{\lambda_2+3} - y_1^{-(\lambda_2+3)}) &
(1 - t^2)(y_1^{\lambda_3+2} - y_1^{-(\lambda_3+2)}) &
(1 + t)(y_1^{\lambda_4+1} - y_1^{-(\lambda_4+1)}) &
0 \\[6pt]

(1 - t^4)(y_2^{\lambda_1+4} - y_2^{-(\lambda_1+4)}) &
(1 + t^3)(y_2^{\lambda_2+3} - y_2^{-(\lambda_2+3)}) &
(1 - t^2)(y_2^{\lambda_3+2} - y_2^{-(\lambda_3+2)}) &
(1 + t)(y_2^{\lambda_4+1} - y_2^{-(\lambda_4+1)}) &
0 \\[6pt]

y_1^{-(\lambda_1+4)} + t^4 y_1^{\lambda_1+4} &
y_1^{-(\lambda_2+3)} - t^3 y_1^{\lambda_2+3} &
y_1^{-(\lambda_3+2)} + t^2 y_1^{\lambda_3+2} &
y_1^{-(\lambda_4+1)} - t y_1^{\lambda_4+1} &
y_1^{-\lambda_5} + y_1^{\lambda_5} \\[6pt]

y_2^{-(\lambda_1+4)} + t^4 y_2^{\lambda_1+4} &
y_2^{-(\lambda_2+3)} - t^3 y_2^{\lambda_2+3} &
y_2^{-(\lambda_3+2)} + t^2 y_2^{\lambda_3+2} &
y_2^{-(\lambda_4+1)} - t y_2^{\lambda_4+1} &
y_2^{-\lambda_5} + y_2^{\lambda_5} \\[6pt]

1 + t^4 & 1 - t^3 & 1 + t^2 & 1 - t & 2
\end{bmatrix}. 
\]}
\end{example}

\subsection{Equidistribution}

\begin{proposition}
  For any $T, T' \subseteq [m]_{1-\eps}$, we have
  \[
    \bH_{\lambda, T}(z_1,\dots,z_m) = \bH_{\lambda,T'}(z_1,\dots,z_m).
  \]
  In particular, $\bH_\lambda(z_1,\dots,z_m;1) = 2^n \bH_{\lambda, \varnothing}(z_1,\dots,z_m)$.
\end{proposition}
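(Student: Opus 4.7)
The plan is to mirror the strategy of Proposition~\ref{prop:equi-A}: construct a family of auxiliary matrices indexed by $\bT\subseteq [m]_{1-\eps}$ whose determinants form a Fourier transform of the quantities $\bH_{\lambda,T}(z)$, and then show this transform vanishes for $\bT\neq \emptyset$. The key geometric input is the pairing of rows $i$ and $n+i$ induced by the substitution $z_{n+i}=z_i^{-1}$.

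For each $\bT\subseteq [m]_{1-\eps}$, let $\tilde N(\bT)$ be the $m\times m$ matrix with entries
\[
\tilde N(\bT)_{i,j} \;=\; z_i^{\lambda_j+\delta_j} + c_j\, z_i^{-(\lambda_j+\delta_j)}, \qquad c_j = \begin{cases} -1 & \text{if } j\in [m]_{1-\eps}\setminus \bT, \\ +1 & \text{if } j\in [m]_\eps\cup \bT. \end{cases}
\]
Equivalently, $\tilde N(\bT)$ is obtained from $M_\lambda(y;1)$ by flipping the sign $(-1)^{m+j}=-1$ to $+1$ in each column $j\in \bT$. Expanding $\det \tilde N(\bT)$ by column multilinearity as in the proof of Proposition~\ref{prop:typeBCDdetForm}, then grouping the resulting sum over $S\subseteq [m]$ by $T:=S_{1-\eps}$ and using $a_\delta\,\bH_{\lambda,T}=(-1)^{|T|}\sum_{S:\,S_{1-\eps}=T} a_{\iota_\lambda(S)}$, yields
\[
\det \tilde N(\bT)\;=\;a_\delta(z)\sum_{T\subseteq [m]_{1-\eps}} (-1)^{|T\cap \bT|}\,\bH_{\lambda,T}(z).
\]

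Next I claim $\det \tilde N(\bT)=0$ whenever $\bT\neq \emptyset$. Using $z_{n+i}=z_i^{-1}$, a direct check shows: in each ``$(+)$''-column $j\in [m]_\eps\cup \bT$, row $n+i$ equals row $i$; in each ``$(-)$''-column $j\in [m]_{1-\eps}\setminus \bT$, row $n+i$ equals minus row $i$. Apply the invertible row transformation $(r_i,\,r_{n+i})\mapsto(r_i+r_{n+i},\ r_i-r_{n+i})$ for each $i=1,\dots,n$, which scales the determinant by $(-2)^n$. In the resulting matrix, the new rows in positions $1,\dots,n$, together with row $m$ when $m$ is odd (whose entries become $1+c_j\in\{0,2\}$), are supported exclusively on the $(+)$-columns, while the new rows in positions $n+1,\dots,2n$ are supported exclusively on the $(-)$-columns. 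There are $n+\eps$ rows supported on the $(+)$-columns, but $|[m]_\eps|+|\bT|=(n+\eps)+|\bT|$ columns of type $(+)$; for $|\bT|>0$, pigeonhole forces a nontrivial linear dependence among these columns, so the transformed determinant (and hence $\det \tilde N(\bT)$) vanishes.

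Combined with the case $\bT=\emptyset$, where Proposition~\ref{prop:typeBCDdetForm} gives $\det \tilde N(\emptyset)=a_\delta(z)\cdot \bH_\lambda(z;1)$, the coefficient matrix $\big((-1)^{|T\cap \bT|}\big)_{T,\bT}$ is the character table of the elementary abelian $2$-group $(\bZ/2)^n$ on $[m]_{1-\eps}$. Orthogonality of characters inverts the resulting system to give $\bH_{\lambda,T}(z)=\tfrac{1}{2^n}\bH_\lambda(z;1)$ independent of $T$, from which both asserted identities follow. The main technical obstacle is verifying the row-support pattern after the row transformation, especially the bookkeeping for the extra row when $m$ is odd; once the block structure is established, the pigeonhole vanishing and the Fourier inversion are immediate.
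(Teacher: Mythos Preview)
Your proof is correct and follows the same underlying strategy as the paper: build an auxiliary matrix for each $\bT\subseteq[m]_{1-\eps}$, exploit the relation $z_{n+i}=z_i^{-1}$ via row operations, and deduce vanishing from a rank-deficiency count. The only real difference is the packaging of the linear algebra: the paper's matrix $M(\bT)$ mixes ``bare'' columns $z_i^{\alpha_j}$ with ``doubled'' columns $z_i^{\alpha_j}+z_i^{-\alpha_j}$, giving the M\"obius-type relations $\sum_{T\subseteq\bT}(-1)^{|T|}\bH_{\lambda,T}=0$, whereas your $\tilde N(\bT)$ uses $z_i^{\alpha_j}\pm z_i^{-\alpha_j}$ throughout and yields the Fourier-type relations $\sum_T(-1)^{|T\cap\bT|}\bH_{\lambda,T}=0$. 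Both systems invert to the same conclusion; your character-orthogonality inversion is arguably the cleaner of the two, and your symmetric row move $(r_i,r_{n+i})\mapsto(r_i+r_{n+i},r_i-r_{n+i})$ makes the block structure (and the handling of row $m$ when $m$ is odd) slightly more transparent than the paper's single subtraction.
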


\begin{proof}
This is equivalent to showing that for all nonempty $\bT \subseteq [m]_{1-\eps}$, we have
\[
  \sum_{T \subseteq \bT} (-1)^{|T|} \bH_{\lambda, T}(z_1,\dots,z_m) = 0.
\]
If we multiply by $a_\delta(z_1,\dots,z_m)$, this is equivalent to showing that
\[
  \sum_{\substack{S \subseteq [m] \\ S_{1-\eps} \subseteq \bT}} a_{\iota_\lambda(S)}(z_1,\dots,z_m) = 0.
\]
The left-hand side is the determinant of the $m \times m$ matrix $M(\bT)$ defined by:
\[
  M(\bT)_{i,j} = \begin{cases}
    z_i^{\lambda_j + \delta_j} & \text{if $j \notin \bT \cup [m]_{\eps}$}\\
    z_i^{\lambda_j + \delta_j} + z_i^{-(\lambda_j + \delta_j)} & \text{if $j \in \bT \cup [m]_{\eps}$}
  \end{cases}.
\]
For each $i=1,\dots,n$ and $j \in \bT \cup [m]_\eps$, we have $M(\bT)_{i,j} = M(\bT)_{i+n,j}$. So if we subtract row $i+n$ from row $i$ for each $i=1,\dots,n$, then in the resulting matrix, the first $n$ rows has $|\bT \cup [m]_\eps| = |\bT| + n+\eps$ columns which are identically 0. In particular, it has rank $\le m - |\bT| - n - \eps = n - |\bT|$. Since $|\bT|>0$, we conclude that $\det M(\bT)=0$.
\end{proof}

\subsection{Factorization}

Now we express $\det M_\lambda'(y;1)$ as a product of two determinants: one of size $n \times n$ and the other of size $(n+\eps) \times (n+\eps)$.  Recall that our convention is that $y_{n+1}=1$.

\begin{proposition}
  Define
  \begin{align*}
    \mu &= \big(\lambda_{1+\varepsilon} + n - \gamma,\ \lambda_{3+\varepsilon} + n-1 - \gamma,\ \dots,\ \lambda_{2n-1+\varepsilon} + 1 - \gamma\big),\\
    \nu &= \big(\lambda_{2-\varepsilon} + n + \varepsilon - \gamma,\ \lambda_{4-\varepsilon} + n + \varepsilon - 1 - \gamma,\ \dots,\ \lambda_{2n+\varepsilon} + 1 - \gamma\big).
  \end{align*}
  Then
  \[
  \bH_{\lambda, \varnothing} (z_1 , \dots , z_m ) \;=\; s_\mu^\rC (y_1 , \dots , y_n)\, s_\nu^\rD (y_1 , \dots , y_{n+\varepsilon}),
  \]
  and in particular 
  $\bH_\lambda (z_1 , \dots , z_m ; 1) \;=\; 2^n\, s_\mu^\rC (y_1 , \dots , y_n)\, s_\nu^\rD (y_1 , \dots , y_{n+\varepsilon})$.
\end{proposition}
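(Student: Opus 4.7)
The plan is to derive the factorization by evaluating the determinantal form of Proposition~\ref{prop:typeBCDdetForm} at $t=1$. Combining that proposition with the equidistribution result of the preceding proposition yields
\[
2^n\, a_\delta(z_1,\dots,z_m)\, \bH_{\lambda,\varnothing}(z_1,\dots,z_m) \;=\; \det M_\lambda(y;1).
\]
Since $z_1\cdots z_m = 1$ on the specialization, the factor $(z_1\cdots z_m)^{1-\gamma}$ relating $a_\delta$ and $a_{\rho^\rA}$ is trivial, so Lemma~\ref{lem:adeltaIdentities} gives, up to an explicit sign from reordering the $z_i$,
\[
a_\delta(z_1,\dots,z_m) \;=\; \pm\, c_{\rho^\rC}(y_1,\dots,y_n)\, d_{\rho^\rD}(y_1,\dots,y_{n+\eps}).
\]
By the Weyl character formula (Proposition~\ref{prop:WeylCharForms}(3)--(4)), the claim thereby reduces to a single determinantal identity of the shape $\det M_\lambda(y;1) = \pm\, 2^{n+1}\, c_{\mu+\rho^\rC}(y_1,\dots,y_n)\, d_{\nu+\rho^\rD}(y_1,\dots,y_{n+\eps})$ with the correct overall sign.

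The central step is to symmetrize and antisymmetrize $M_\lambda(y;1)$ under the involution $y_i\leftrightarrow y_i^{-1}$. For each $i=1,\dots,n$, replace rows $i$ and $i+n$ by their sum $R_i^+$ and difference $R_i^-$. Since $z_i$ and $z_{i+n}$ are inverses, the new entries factor as $(y_i^{\lambda_j+\delta_j}\pm y_i^{-(\lambda_j+\delta_j)})(1\pm(-1)^{m+j})$, so $R_i^+$ is supported on $P_{\text{ev}}:=\{j\in[m]:m+j \text{ even}\}$ (of size $n+\eps$) and $R_i^-$ on $P_{\text{od}}:=[m]\setminus P_{\text{ev}}$ (of size $n$). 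When $\eps=1$, the leftover row $m$ (with $z_m=1$) has entries $1+(-1)^{m+j}$, which is also supported on $P_{\text{ev}}$. Permuting rows so that the ``plus'' rows (and row $m$, if $\eps=1$) come first, and columns so that $P_{\text{ev}}$ precedes $P_{\text{od}}$, renders the matrix block-diagonal, with an $(n+\eps)\times(n+\eps)$ plus-block $M^+$ and an $n\times n$ minus-block $M^-$. The row operations and permutations contribute a factor of $(-2)^n$ times signs from the rearrangements.

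The final step is to identify the two blocks as Weyl-type determinants. Factoring the common multiplicative factor of $2$ out of each row of $M^+$ and $M^-$ and ordering the selected columns in decreasing order of $\lambda_j+\delta_j$, a direct calculation from $\delta_j=m+1-j-\gamma$ and the definitions of $\mu$ and $\nu$ verifies that the resulting sequences of exponents are exactly $(\nu_k+\rho^\rD_k)_{k=1,\dots,n+\eps}$ on the $P_{\text{ev}}$ side and $(\mu_k+\rho^\rC_k)_{k=1,\dots,n}$ on the $P_{\text{od}}$ side; in the $\eps=1$ case, the all-$1$'s row inherited from $z_m=1$ corresponds exactly to the $y_{n+\eps}=1$ substitution in $d_{\nu+\rho^\rD}$. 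Hence $\det M^+$ equals an explicit sign and power of $2$ times $d_{\nu+\rho^\rD}(y_1,\dots,y_{n+\eps})$, and $\det M^-$ similarly matches $c_{\mu+\rho^\rC}(y_1,\dots,y_n)$. The main obstacle is the bookkeeping of signs and powers of $2$: the $(-2)^n$ from the row operations, the row- and column-permutation signs (which depend on the parities of $n$, $\eps$, and $\gamma'$), the factor $2$ in the definition $d_\alpha=\tfrac12\det(\cdots)$, and the reordering sign in $a_\delta$ must collectively collapse into the single $2^n$ prefactor of the equidistribution formula $\bH_\lambda(z;1)=2^n\bH_{\lambda,\varnothing}(z)$, and this must be checked uniformly across $\eps\in\{0,1\}$ and $\gamma\in\{0,\tfrac12,1\}$.
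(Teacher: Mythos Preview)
Your approach is correct and closely parallels the paper's, with one pleasant simplification. The paper works with $M'_\lambda(y;1)$, whose top $n$ rows carry the factor $(y_i^{\alpha}+(-1)^{\gamma'}y_i^{-\alpha})$; this forces a case split on the parity of $\eps+\gamma'$ to decide which column set vanishes and hence which block is of $c$-type versus $d$-type. You instead work directly with $M_\lambda(y;1)$ and perform \emph{both} the sum and the difference of rows $i$ and $i+n$. Since the sign pattern $(-1)^{m+j}$ in $M_\lambda(y;1)$ is independent of $\gamma'$, your block decomposition is uniform: the difference rows are always $c$-type and supported on $P_{\rm od}$, the sum rows always $d$-type and supported on $P_{\rm ev}$, and the leftover row (when $\eps=1$) sits with the sum block. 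This yields a genuinely block-diagonal matrix rather than the block-triangular shape the paper obtains via Laplace expansion, and it handles all $(\eps,\gamma)$ at once.

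The only part you leave unfinished is the sign and power-of-$2$ accounting, which you flag honestly. One small correction there: replacing $(R_i,R_{i+n})$ by $(R_i+R_{i+n},\,R_i-R_{i+n})$ multiplies the determinant by $(-2)^n$, so you must \emph{divide} by $(-2)^n$ to recover $\det M_\lambda(y;1)$, not multiply. After that, combining the $2^n$ pulled from each block, the $\tfrac12$ in the definition of $d_\alpha$, the $2$ in the definition of $s^\rD$, the reordering sign in $a_\delta(z)$ versus Lemma~\ref{lem:adeltaIdentities}, and the permutation signs does collapse to the claimed identity; this is routine but should be written out to complete the argument.
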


\begin{proof}
{\bf Case 1.} Suppose that $\eps+\gamma'$ is even. In the first $n$ rows of $M'_\lambda(y;1)$, the odd-indexed columns are identically 0. Denote the submatrix consisting of the even-indexed columns by
\[
  T'_\lambda = ( y_i^{\lambda_{2j} + \delta_{2j}} + (-1)^{\gamma'} y_i^{-(\lambda_{2j} + \delta_{2j})})_{i,j=1,\dots,n},
\]
and the bottom submatrix consisting of rows $n+1,\dots,2n+\eps$ and the odd-indexed columns by
\[
  B'_\lambda = ( y_i^{- (\lambda_{2j-1} + \delta_{2j-1})} - (-1)^{\eps} y_i^{\lambda_{2j-1} + \delta_{2j-1}})_{i,j=1,\dots,n+\eps}.
\]
Then we have
\[
  \det M_\lambda(y;1) = (-1)^{\binom{n+1}{2}} 2^n \det T'_\lambda \cdot \det B'_\lambda,
\]
by the two equalities
\[
\det M'_\lambda(y;1)=(-1)^{\binom{n+1}{2}} \det T'_\lambda\det B'_\lambda,\qquad
\det M_\lambda(y;1)=2^n\det M'_\lambda(y;1).
\]
Now we identify these determinants. If $\eps = \gamma' = 0$, then $m=2n$ and
\begin{align*}
  \det T'_\lambda = 2d_{(\lambda+\delta)_{\rm even}}(y_1,\dots,y_n), \qquad \det B'_\lambda = (-1)^n c_{(\lambda+\delta)_{\rm odd}}(y_1,\dots,y_n).
\end{align*}

If instead $\eps = \gamma' = 1$, then $m=2n+1$, $\gamma=1$, and
\begin{align*}
  \det T'_\lambda = c_{(\lambda+ \delta)_{\rm even}}(y_1,\dots,y_n), \qquad \det B'_\lambda = 2d_{(\lambda + \delta)_{\rm odd}}(y_1,\dots,y_n,1).
\end{align*}

{\bf Case 2.} Suppose that $\eps+\gamma'$ is odd. Consider the first $n$ rows of $M'_\lambda(y;1)$. The even-indexed columns are identically 0. Denote the $n \times n$ submatrix consisting of the even-indexed columns and rows $n+1,\dots,2n$ by
\[
  B'_\lambda = (
      y_i^{- (\lambda_{2j} + \delta_{2j})} + (-1)^{\eps} y_i^{\lambda_{2j} + \delta_{2j}})_{i,j=1,\dots,n}.
\]
Denote the complementary $(n+\eps) \times (n+\eps)$ submatrix consisting of the odd-indexed columns in rows $1,\dots,n$ (and $2n+1$ if $\eps=1$) by $T'_\lambda$:
\[
  (T'_\lambda)_{i,j} = \begin{cases}
    y_i^{\lambda_{2j-1} + \delta_{2j-1}} + (-1)^{\gamma'} y_i^{-(\lambda_{2j-1} + \delta_{2j-1})} & \text{if $1\le i \le n$}\\
    2 & \text{if $\eps=1$ and $i=2n+1$}
    \end{cases}.
\]

Then we have
\[
  \det M_\lambda(y;1) = (-1)^{\binom{n}{2}} 2^n \det T'_\lambda \cdot \det B'_\lambda.
\]
Now we identify these determinants.
If $\eps=0$ and $\gamma'=1$, then $m=2n$ and
\[
  \det B'_\lambda = 2 d_{(\lambda+\delta)_{\rm even}}(y_1,\dots,y_n),
  \qquad
  \det T'_\lambda = c_{(\lambda+\delta)_{\rm odd}}(y_1,\dots,y_n).
\]

Finally, if $\eps = 1$, $\gamma' = 0$, then $m=2n+1$ and
\[
  \det B'_\lambda = (-1)^n c_{(\lambda+\delta)_{\rm even}}(y_1,\dots,y_n), \qquad \det T'_\lambda = 2 d_{(\lambda+ \delta)_{\rm odd}}(y_1,\dots,y_n,1). \qedhere
\]
\end{proof}

\part{Algebraic Reinterpretation} \label{part:2}

\section{Kostant's theorem for Lie algebra homology}\label{subsec:KostantThm}

We recall Kostant’s decomposition of Lie algebra homology and fix conventions for the dot action and parabolic data.

\subsection{Setup}
Let \( \mathfrak{g} \) be a complex semisimple Lie algebra with Cartan subalgebra \( \mathfrak{h} \subset \mathfrak{g} \). Fix a choice of positive roots \( \Delta^+ \subset \Delta \), and let \( \mathfrak{b} = \mathfrak{h} \oplus \mathfrak{n}_+ \) be the corresponding Borel subalgebra.

Let \( \mathfrak{p} \subseteq \mathfrak{g} \) be a standard parabolic subalgebra, and write $\mathfrak{p} = \mathfrak{l} \oplus \fn$, where \( \mathfrak{l} \) is the Levi subalgebra and \( \mathfrak{n} \) is the nilpotent radical of \( \mathfrak{p} \). Let $\fp_- = \fl \oplus \fn_-$ denote the opposite parabolic, with $\fn_-$ its nilpotent radical.

We let \( W \) denote the Weyl group of \( \mathfrak{g} \), and \( W_\fp \subseteq W \) denote the Weyl group of \( \mathfrak{l} \). Define \( W^\fp \subseteq W \) to be the set of minimal-length coset representatives for \( W / W_{\fp} \). Concretely, the minimal-length coset representatives in $W^\fp$ are precisely those $w \in W$ such that $w^{-1} \bullet 0$ (with the dot action defined below) is a dominant weight when restricted to the Levi subalgebra $\fl$ (with respect to $\Delta^+(\fl)$).

Next, define $\rho \in \fh^*$ to be the half-sum of positive roots (equivalently, the sum of the fundamental weights) of $\fg$ and the \defi{dot action}
\[
  w \bullet \lambda := w(\lambda + \rho) - \rho.
\]
Let $L^\fg_\lambda$ denote the irreducible $\fg$-representation of highest weight $\lambda$, and likewise for $L^\fl_{\mu}$. Kostant’s theorem decomposes \(\rH_i(\fn_-;L^\fg_\lambda)\) as a direct sum of \(\fl\)-highest weight modules indexed by elements of \(W^\fp\) of length \(i\).

\begin{theorem}[\cite{kostant}]
With notation as above, we have an $\fl$-equivariant decomposition:
\[
\rH_i(\fn_-; L^\mathfrak{g}_\lambda) \cong \bigoplus_{\substack{w \in W^\mathfrak{p} \\ \ell(w) = i }} L^\fl_{w^{-1} \bullet \lambda}.
\]
\end{theorem}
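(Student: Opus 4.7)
The plan is to compute $\rH_\bullet(\fn_-; L^\fg_\lambda)$ using the Chevalley--Eilenberg (Koszul) complex $C_\bullet = \bigwedge^\bullet \fn_- \otimes L^\fg_\lambda$ with its standard differential. Because $\fl$ normalizes $\fn_-$ and acts on $L^\fg_\lambda$, each $C_i$ is an $\fl$-module and the differential is $\fl$-equivariant, so the homology inherits an $\fl$-action; what remains is to pin down the irreducible $\fl$-constituents in each degree.

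The cleanest route is Kostant's harmonic-theoretic argument. I would choose a $\fg$-invariant Hermitian form (built from the Killing form on $\fg$ together with a compatible inner product on $L^\fg_\lambda$) to obtain an adjoint $d^*$ to the Koszul differential $d$, and set $\Box = dd^* + d^*d$. Standard Hodge theory then gives $\rH_i(\fn_-; L^\fg_\lambda) \cong \ker(\Box|_{C_i})$ as $\fl$-modules. The technical core is Kostant's identity expressing $\Box$ in terms of Casimir operators: on an $\fl$-weight vector in $C_i$ of weight $\mu$, one verifies that $2\Box$ acts by the scalar $(\mu + \rho,\,\mu + \rho) - (\lambda + \rho,\,\lambda + \rho)$, where $(\cdot,\cdot)$ is the $W$-invariant form on $\fh^*$. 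Consequently, harmonic vectors of weight $\mu$ can only exist when $\mu + \rho \in W\cdot(\lambda + \rho)$.

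The final step is combinatorial. Among weights $w(\lambda+\rho)-\rho$ with $w \in W$, those which can be $\fl$-dominant are exactly the weights $w^{-1}\bullet\lambda$ with $w \in W^\fp$, by the defining characterization of minimal-length coset representatives. For such $w$ one has $w^{-1}\bullet\lambda = w^{-1}\lambda - \sum_{\alpha \in N(w^{-1})}\alpha$, where $N(w^{-1}) := \{\alpha \in \Delta^+ : w^{-1}\alpha < 0\}$; the first summand is a weight of $L^\fg_\lambda$, and the standard fact that $N(w^{-1}) \subseteq \Delta(\fn)$ for $w \in W^\fp$, combined with $|N(w^{-1})| = \ell(w)$, places $L^\fl_{w^{-1}\bullet\lambda}$ in $C_{\ell(w)}$ (and in no other $C_i$, since the relevant root sums are distinct). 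Multiplicity one follows from the strict regularity of $\lambda+\rho$ together with the vanishing of $\Box$ on the identified weight components. The main obstacle is deriving the Casimir identity cleanly: it requires expanding $d$ and $d^*$ in dual bases for $\fn_-$ and $\fn$, repeatedly invoking invariance of the Killing form, and carefully collecting cross-terms into the Casimir of $\fg$. An alternative modern route bypasses this step by using the Casselman--Osborne lemma on the $Z(\fg)$-action on $\fn$-homology, combined with a character computation via the Weyl denominator identity to pin down the summands on the nose.
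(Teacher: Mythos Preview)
The paper does not give its own proof of this statement; it is quoted as a classical result with a citation to Kostant's 1961 paper and used as a black box throughout. Your sketch is an outline of Kostant's original harmonic-theoretic argument (Laplacian on the Chevalley--Eilenberg complex, Casimir identity, then the combinatorics of $W^\fp$), so in effect you are reproducing the content of the citation rather than something to be compared against the paper.

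That said, a few slips in your outline are worth fixing. First, $\Box$ does not act by a scalar on an arbitrary $\fl$-weight vector of weight $\mu$; it is $\fl$-equivariant and acts by a scalar on each $\fl$-isotypic component, with the scalar determined by the \emph{highest} weight $\mu$ of that component. Second, with the standard conventions the eigenvalue is $\tfrac12\bigl[(\lambda+\rho,\lambda+\rho)-(\mu+\rho,\mu+\rho)\bigr]$, the reverse of what you wrote; positive semidefiniteness of $\Box=dd^*+d^*d$ forces this sign (the vanishing locus is of course the same). Third, with the paper's convention that $W^\fp$ consists of minimal representatives for $W/W_\fp$ characterized by $w^{-1}\bullet 0$ being $\fl$-dominant, one has $N(w)\subseteq\Delta(\fn)$ and $w^{-1}\bullet\lambda=w^{-1}\lambda-\sum_{\alpha\in N(w)}\alpha$; you wrote $N(w^{-1})$ in both places. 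None of these affect the overall strategy, but they would matter if you wrote the argument out in full.
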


Note that since $\rU (\fn_-)$ is a nonnegatively graded $\bC$-algebra, Kostant's theorem implies that there exists an $\fl$-equivariant minimal complex $C_\bullet$ of free $\rU (\fn_-)$-modules with terms
\[C_i = \bigoplus_{\substack{w \in W^\mathfrak{p} \\ \ell(w) = i }} \rU (\fn_-) \otimes L^\fl_{w^{-1} \bullet \lambda}.
\]
In fact, if the summands are viewed as parabolic Verma modules, then the statement can be upgraded to $\fg$-equivariance. These complexes are typically referred to as \defi{(parabolic) BGG resolutions}, and the structure of these objects is described in \cite{BGG1975,Lepowsky1977}.

For the remainder of this section we translate Kostant's theorem to each of the relevant types and deduce the relationship of the formulas derived in the previous sections with the graded characters of Lie algebra homology. We will also consider the case $\fg = \fgl_n$; this behaves almost exactly the same way as $\fsl_n$ except we will be careful to keep track of copies of the trace character.

A \defi{partition} is a tuple of nonnegative integers $\lambda = (\lambda_1 , \dots , \lambda_n)$ with $\lambda_1 \geq \cdots \geq \lambda_n$. Its \defi{length}, denoted $\ell(\lambda)$, is the number of nonzero $\lambda_i$. A \defi{Young diagram} of shape $\lambda$ is a left-justified set of boxes whose $i$th row has  $\lambda_i$ boxes. Given a partition $\lambda$, its \defi{transpose} $\lambda^T$ is the partition obtained by reading off the column lengths of the Young diagram of $\lambda$. In a formula: $\lambda^T_i = \# \{j \mid \lambda_j \ge i\}$. A partition $\lambda$ is \defi{self-conjugate} if $\lambda = \lambda^T$. We treat partitions as vectors, so we can add them and scale them, e.g., $2\lambda = (2\lambda_1,2\lambda_2,\dots)$. The notation $(a^b)$ denotes a sequence of $b$ copies of $a$.
  
For background on Schur functors, see \cite[\S 2]{weyman2003}, but note that the indexing used there is transposed from standard conventions, i.e., the Schur functor indexed by $\lambda$ there is what we would call $\bS_{\lambda^T}$. To be extra precise, our conventions are such that
    $$\bS_{(d)} = S^d, \qquad \bS_{(1^d)} = \bigwedge^d.$$
    If $V$ is an $m$-dimensional space, then for any $d \ge 0$, we have $\bS_\lambda V \otimes L^{\otimes d} \cong \bS_{\lambda + (d,\dots,d)}(V)$ where $L$ is the trace character of $\fgl(V)$ (or determinant character if we use the group $\GL(V)$). Since $L^{-1}$ makes sense, we can use this identity to define $\bS_\lambda V$ when $\lambda$ is a weakly decreasing sequence of integers which has negative entries.

\section{Type A Generalized $\rho$-Decomposition}\label{sec:typeArhodecomp}

\subsection{Setup}

Let $V$ and $U$ be vector spaces of dimensions $n$ and $k$, respectively, and without loss of generality assume $n \geq k$. Fix the standard parabolic subalgebra $\fp$ with Levi factor \( \mathfrak{l} \cong \mathfrak{gl}_n \times \mathfrak{gl}_k \) induced by the decomposition
\[
  \fgl (V \oplus U) = \underbrace{V^* \otimes U}_{\fn_-} \oplus
  \underbrace{(\fgl(V) \oplus \fgl(U))}_\fl
\oplus \underbrace{U^* \otimes V}_{\fn}.
\]
In this section, we set up the relevant notation and state Kostant’s formula in the \(\fgl(V \oplus U)\) setting. The Weyl group is the symmetric group $\fS_{n+k}$ and the corresponding Weyl subgroup of $\fp$ is
\[
W_\fp = \fS_n \times \fS_k \subset \fS_{n+k}.
\]

The set \( W^\fp \subset \fS_{n+k} \) of minimal length coset representatives for \( W / W_\fp \) consists of those \( w \in \fS_{n+k} \) such that $w^{-1} \bullet 0 \text{ is dominant for } \mathfrak{l}$. Explicitly, writing \( w^{-1} \bullet 0 = (\mu_1, \dots, \mu_{n+k}) \), this translates to:
\begin{itemize}
  \item \( \mu_1 \ge \mu_2 \ge \dots \ge \mu_n \) (the first \( n \) entries are weakly decreasing), and
  \item \( \mu_{n+1} \ge \mu_{n+2} \ge \dots \ge \mu_{n+k} \) (the last \( k \) entries are weakly decreasing).
\end{itemize}
These are the Weyl group elements that contribute to Kostant's theorem for the Lie algebra homology of \( \fn_- \).

Recall that for a nonnegative integer $m$, we define \( [m] := \{1,2,\dots,m\} \), and let \( \binom{[n+k]}{n} \) denote the set of all \( n \)-element subsets of \( [n+k] \). We equip this set with a partial order called the \defi{Gale order} (also known as the \defi{componentwise order}), defined as follows.

  \medskip
  
\noindent
\begin{minipage}{0.75\textwidth}
\begin{definition}
Let \( A = \{a_1 < a_2 < \cdots < a_n\} \) and \( B = \{b_1 < b_2 < \cdots < b_n\} \) be two elements of \( \binom{[n+k]}{n} \). We say that
\[
A \leq_{\text{Gale}} B
\]
if and only if
\[
a_i \leq b_i \quad \text{for all } i = 1, \dots, n. \qedhere
\]
\end{definition}

The Gale order defines a ranked poset structure on \( \binom{[n+k]}{n} \) with minimal element \( \{1,2,\dots,n\} \) and maximal element \( \{k+1, \dots, n+k\} \) of rank $nk$. The rank of a general element $\{ a_1 , \dots , a_n \}$ is given by 
\[
  \rank \{a_1 , \dots , a_n \} = \sum_{i = 1}^n (a_i - i) = \Sigma(\{a_1,\dots,a_n\}) - \binom{n+1}{2},
\]
where $\Sigma(S) = \sum_{s \in S} s$.
On the right, we have drawn the Hasse diagram of the Gale order for $n=k=3$.

\end{minipage}
\hfill
\begin{minipage}{0.2\textwidth}
\adjustbox{scale=.7,center}{
  \begin{tikzcd}[row sep=1.2em, column sep=small]
	& 456 \\
	& 356 \\
	256 && 346 \\
	156 & 246 & 345 \\
	146 & 236 & 245 \\
	136 & 145 & 235 \\
	126 & 135 & 234 \\
	125 && 134 \\
	& 124 \\
	& 123
	\arrow[no head, from=1-2, to=2-2]
	\arrow[no head, from=2-2, to=3-1]
	\arrow[no head, from=2-2, to=3-3]
	\arrow[no head, from=3-1, to=4-1]
	\arrow[no head, from=3-1, to=4-2]
	\arrow[no head, from=3-3, to=4-3]
	\arrow[no head, from=4-1, to=5-1]
	\arrow[no head, from=4-2, to=3-3]
	\arrow[no head, from=4-2, to=5-1]
	\arrow[no head, from=4-2, to=5-2]
	\arrow[no head, from=4-2, to=5-3]
	\arrow[no head, from=4-3, to=5-3]
	\arrow[no head, from=5-1, to=6-1]
	\arrow[no head, from=5-1, to=6-2]
	\arrow[no head, from=5-2, to=6-1]
	\arrow[no head, from=5-2, to=6-3]
	\arrow[no head, from=5-3, to=6-2]
	\arrow[no head, from=5-3, to=6-3]
	\arrow[no head, from=6-1, to=7-1]
	\arrow[no head, from=6-1, to=7-2]
	\arrow[no head, from=6-2, to=7-2]
	\arrow[no head, from=6-3, to=7-2]
	\arrow[no head, from=6-3, to=7-3]
	\arrow[no head, from=7-1, to=8-1]
	\arrow[no head, from=7-2, to=8-1]
	\arrow[no head, from=7-2, to=8-3]
	\arrow[no head, from=7-3, to=8-3]
	\arrow[no head, from=8-1, to=9-2]
	\arrow[no head, from=8-3, to=9-2]
	\arrow[no head, from=9-2, to=10-2]
      \end{tikzcd}
      }
\end{minipage}

\begin{proposition}\label{prop:galeAndBruhat}
  $W^\fp$ is naturally in bijection with $\binom{[n+k]}{n}$ via the map
\[
S = \{ s_1 < \cdots < s_n \} \longmapsto w_S,
\]
where $w_S \in W^\fp$ sends positions $1,\dots,n$ to $s_1,\dots,s_n$ (in order), and positions $n+1,\dots,n+k$ to the complement $[n+k] \setminus S$ (also in order). Under this bijection,
\begin{enumerate}
\item the Bruhat order on $W^\fp$ corresponds to the Gale order on $\binom{[n+k]}{n}$:
\[
S \leq_{\textnormal{Gale}} T \quad \Longleftrightarrow \quad w_S \leq w_T \text{ in Bruhat order}. 
\]
\item Rank corresponds to length: $\rank(T) = \ell(w_T)$.
\end{enumerate}
\end{proposition}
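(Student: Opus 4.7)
The plan is to first establish the bijection between $W^\fp$ and $\binom{[n+k]}{n}$, then compute the length, and finally handle the Bruhat order correspondence. Throughout, the intuition is that $w_S$ is the unique permutation that shuffles $[n+k]$ into the two blocks $[n]$ and $\{n+1,\dots,n+k\}$, preserving increasing order within each block, and the Gale order just tracks how this shuffle moves the first block upward.

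For the bijection, I would observe that every right coset in $W/W_\fp$ contains a unique minimal-length representative, namely the one that is increasing on $\{1,\dots,n\}$ and increasing on $\{n+1,\dots,n+k\}$. Such a permutation is uniquely determined by the set $S=\{w(1),\dots,w(n)\}$, and conversely any $S\in\binom{[n+k]}{n}$ determines such a $w$. This gives the stated bijection $S\mapsto w_S$. I would also verify that $w_S^{-1}\bullet 0 = w_S^{-1}(\rho) - \rho$ is dominant on $\fl$, matching the characterization of $W^\fp$ in the preceding paragraph, but this is essentially a restatement of the increasing-blocks condition.

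For the length, I would compute $\ell(w_S)$ by counting inversions. Since $w_S$ is increasing on each block, the only inversions are pairs $(i,j)$ with $i\in[n]$, $j\in\{n+1,\dots,n+k\}$, and $w_S(i)>w_S(j)$. Parametrizing by values, these are pairs $(a,b)$ with $a\in S$, $b\in S^c$, $a>b$. Summing over $a=s_i\in S$, the number of smaller elements of $S^c$ below $s_i$ is $s_i-i$, since $\{1,\dots,s_i\}$ contains exactly $i$ elements of $S$. Hence
\[
\ell(w_S)=\sum_{i=1}^n (s_i-i)=\Sigma(S)-\binom{n+1}{2}=\rank(S),
\]
which is part (2).

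For part (1), the Bruhat-versus-Gale statement is a classical fact for Grassmannian permutations; I would cite Bj\"orner--Brenti's \emph{Combinatorics of Coxeter Groups} (the tableau criterion for Bruhat order restricted to minimal coset representatives of maximal parabolics), but also sketch the direct argument. The key is to identify the cover relations: $w_S\lessdot w_T$ in Bruhat order if and only if $T$ is obtained from $S$ by deleting some $s_i$ and inserting $s_i+1$, where $s_i+1\notin S$; equivalently, $T$ covers $S$ in the Gale order. Given this, both orders are determined by the same cover relations on the same underlying set, so they coincide. The main obstacle here is verifying the cover correspondence cleanly: one direction (Gale cover $\Rightarrow$ Bruhat cover) is a direct transposition computation showing the length increases by exactly $1$; the other direction (Bruhat cover $\Rightarrow$ Gale cover) requires checking that multiplying $w_S$ by a transposition $t_{a,b}$ yielding another element of $W^\fp$ forces $a\in[n]$, $b\in\{n+1,\dots,n+k\}$ with the swapped values consecutive in $[n+k]$, which is a short case analysis using the increasing-block property. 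Once covers match up, the equivalence of the two orders follows since both are the transitive closures of their cover relations on a ranked poset of the same rank function $\rank(S)=\ell(w_S)$.
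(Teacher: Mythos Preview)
Your proof is correct and follows essentially the same cover-relation strategy as the paper. The paper's argument is briefer---it only verifies that a Gale cover yields a Bruhat cover (via left multiplication by the simple transposition $(s_i,s_i+1)$) and deduces the rank--length equality from this---so your explicit inversion count for $\ell(w_S)=\sum_i(s_i-i)$ and your treatment of the reverse direction (Bruhat cover in $W^\fp$ $\Rightarrow$ Gale cover) actually fill in details the paper leaves implicit.
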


The proof of Proposition \ref{prop:galeAndBruhat} is likely well-known, but we could not find a reference.

\begin{proof}
  Assume that $S = \{ s_1 < \cdots < s_n\}$ is covered by $T = \{ t_1 < \cdots < t_n \}$ with respect to the Gale ordering. By definition, this means that there is a distinguished integer $1 \leq i \leq n$ satisfying $s_j = t_j$ for $i \neq j$, and $t_i = s_i+1$. Then $w_T$ is obtained from $w_S$ by multiplying on the left by the simple transposition $(s_i, s_i+1)$. 
 By definition, it follows that $w_T$ covers $w_S$ in the Bruhat ordering. The fact that the ranks are preserved is an immediate consequence.
\end{proof}

\subsection{Statement and Examples}

Using the notation from \S\ref{sec:A-setup}, define
    \[
      \bS_{\beta_\lambda(S)} (V,U) =
      \bS_{\beta^1_\lambda(S)}(V) \otimes \bS_{\beta^2_\lambda(S)}(U). 
    \]

    Here is Kostant's theorem specialized to our situation:

\begin{lemma} \label{lem:A-homology}
    For all $i \geq 0$ there is a $\fgl(V) \times \fgl(U)$-equivariant isomorphism
    \[
      \rH_i ( V^* \otimes U ; L^{\fgl (V \oplus U)}_\lambda ) = \bigoplus_{\substack{S \in \binom{[n+k]}{n} \\ \rank S = i}} \bS_{\beta_\lambda (S)}(V,U).
    \]
\end{lemma}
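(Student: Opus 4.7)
The plan is to invoke Kostant's theorem (stated in Section~\ref{subsec:KostantThm}) for the parabolic $\fp = \fl \oplus \fn$ of $\fgl(V \oplus U)$ with $\fl \cong \fgl(V) \oplus \fgl(U)$ and $\fn_- = V^* \otimes U$, and then transport the indexing set $W^\fp$ onto $\binom{[n+k]}{n}$ using the bijection $S \mapsto w_S$ from Proposition~\ref{prop:galeAndBruhat}. After this reindexing, all that remains is to match the highest weight $w_S^{-1} \bullet \lambda$ with the pair $(\beta^1_\lambda(S), \beta^2_\lambda(S))$ from \S\ref{sec:A-setup}.

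Concretely, Kostant's theorem gives
\[
\rH_i(V^* \otimes U; L^{\fgl(V\oplus U)}_\lambda) \cong \bigoplus_{\substack{w \in W^\fp \\ \ell(w) = i}} L^\fl_{w^{-1} \bullet \lambda},
\]
and by Proposition~\ref{prop:galeAndBruhat} the length condition $\ell(w_S) = i$ translates into the rank condition $\rank(S) = i$, so the sum is naturally indexed by subsets $S \in \binom{[n+k]}{n}$ of rank $i$. To compute $w_S^{-1} \bullet \lambda$ I take $\rho = \delta_{n+k} = (n+k-1, \dots, 1, 0)$ as a representative for the half-sum of positive roots of $\fgl_{n+k}$ (any other standard choice differs by a multiple of $(1,\ldots,1)$, which is fixed by $\fS_{n+k}$ and so cancels in the dot action). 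Writing $S = \{s_1 < \cdots < s_n\}$, the permutation $w_S^{-1}$ gathers the entries of $\lambda + \delta_{n+k}$ at positions $s_1, \ldots, s_n$ in order into the first $n$ slots, and the entries indexed by $S^c$ in order into the last $k$ slots. This produces the concatenation $(\iota^1_\lambda(S), \iota^2_\lambda(S))$. Subtracting $\delta_{n+k}$, which splits as $(\delta_n + (k, \dots, k), \delta_k)$ across the two blocks, yields exactly $(\beta^1_\lambda(S), \beta^2_\lambda(S))$ by the definitions in \S\ref{sec:A-setup}.

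The last step is to identify $L^\fl_{(\beta^1_\lambda(S),\beta^2_\lambda(S))}$ with $\bS_{\beta^1_\lambda(S)}(V) \otimes \bS_{\beta^2_\lambda(S)}(U) = \bS_{\beta_\lambda(S)}(V,U)$, using the extension of $\bS_\mu$ to weakly decreasing integer sequences (possibly with negative entries) via tensoring with powers of the determinant character, as noted at the end of \S\ref{subsec:KostantThm}. The argument has no conceptual obstacle: everything reduces to Kostant plus the already-established bijection $W^\fp \leftrightarrow \binom{[n+k]}{n}$. The only point requiring genuine care is bookkeeping, namely verifying that $w_S^{-1}$ sorts the two blocks in precisely the order claimed, since any transposition error here would propagate through the dot action and destroy the match with $\beta_\lambda(S)$.
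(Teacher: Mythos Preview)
Your proposal is correct and matches the paper's approach exactly: the paper states this lemma as ``Kostant's theorem specialized to our situation'' without giving an explicit proof, and the details you supply (the bijection $S \mapsto w_S$ from Proposition~\ref{prop:galeAndBruhat}, the computation $w_S^{-1}\bullet\lambda = (\beta^1_\lambda(S),\beta^2_\lambda(S))$ via $\rho = \delta_{n+k}$, and the identification with $\bS_{\beta_\lambda(S)}(V,U)$) are precisely what is needed to unpack that specialization.
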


Notice that in the notation of Section \ref{sec:typeAidentities}, the above lemma implies that we have the equality
$$\bH_\lambda^k (x_1  , \dots , x_n ; t) = \sum_{i \geq 0} \ch \rH_i (V^* \otimes U ; L_\lambda^{\fgl (V \oplus U)}) \cdot  t^i.$$

Set \(q:= \lceil \frac{n+k}{2} \rceil =|[n+k]_{\rm odd}|\) (the number of odd indices in \([n+k]\)). We now deduce the following consequences for graded characters: divisibility, equidistribution, factorization at \(k\in\{n-1,n\}\), and a closed form for total dimension.

\begin{theorem} \label{thm:A-rho}
  Let $V$ and $U$ be vector spaces of dimensions $n$ and $k$, respectively, with $n \geq k$. Let $\lambda = (\lambda_1 , \dots , \lambda_{n+k}) \in \bZ^{n+k}$ with $\lambda_1 \geq \lambda_2 \geq \cdots \geq \lambda_{n+k}$.

  In what follows, we consider characters of the $n$-dimensional toral subalgebra $\ft \subset \fgl(V) \times \fgl(U)$ given by pairs of diagonal matrices of the form $(\diag(a_1,\dots, a_n), \diag(a_1,\dots,a_k))$.
  
    \begin{enumerate}
    \item The graded character $\sum_{i \geq 0} \ch \rH_i (\fn_- ; L_\lambda^{\fgl (V \oplus U)}) \cdot t^i$ is divisible by $(1+t)^k$. 
  \item For any integer $i \geq 0$ we have
    \[
      \sum_{\substack{S \in \binom{[n+k]}{n} \\ |S_{\rm odd}| = i}} \ch \bS_{\beta_\lambda (S)} (V,U) = \binom{k}{q-i} \sum_{\substack{S \in \binom{[n+k]}{n} \\ S_{\rm odd} = [n+k]_{\rm odd}}} \ch \bS_{\beta_\lambda (S)} (V,U).
      \]
  In particular,
  $$\sum_{i \geq 0} \ch \rH_i (\fn_- ; L_\lambda^{\fgl (V \oplus U)}) = 2^k \sum_{\substack{S \in \binom{[n+k]}{n} \\ S_{\rm odd} = [n+k]_{\rm odd}}} \ch \bS_{\beta_\lambda (S)} (V,U).$$
  \item When $k \in \{ n-1, n \}$, we have 
  \[\ch \rH_\bullet (\fn_- ; L_\lambda^{\fgl (V \oplus U)}) = 2^k s_{\rho^{\rm top}_\lambda} (x_1 , \dots , x_n) s_{\rho^{\rm bot}_\lambda} (x_1 , \dots , x_k),  \]
where
\begin{align*}
\rho^{\mathrm{top}}_\lambda &= \left( \lambda_{1},\ \lambda_{3}-1,\ \dots,\ \lambda_{2n-1} -n+1 \right), \\
  \rho^{\mathrm{bot}}_\lambda &= \left( \lambda_{2} + n-1,\ \lambda_{4}+n-2,\ \dots,\ \lambda_{2k} + n-k \right).                              
\end{align*}

\item When $k \in \{ n-1, n \}$, the \emph{total} dimension satisfies
  \begin{align*}
    \dim \rH_\bullet (V^* \otimes U; L_\lambda^{\fgl (V \oplus U)}) &= 2^{\dim (V^* \otimes U)} \cdot \prod_{\substack{1 \leq i < j \leq n+k\\ i\equiv j \pmod 2}} \left(1 + \frac{\lambda_{i} - \lambda_{j}}{j-i}\right).                                                                    
  \end{align*}
\end{enumerate}
  \end{theorem}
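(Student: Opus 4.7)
The plan is to reduce each of (1)--(4) to the combinatorial results of Section~\ref{sec:typeAidentities} via Kostant's theorem. The bridge is Lemma~\ref{lem:A-homology}: restricting characters to the toral subalgebra $\ft$ in the statement sends the $\fgl(U)$-character variables to the first $k$ of the $\fgl(V)$-character variables, so $\ch\bS_{\beta_\lambda(S)}(V,U)$ becomes $s_{\beta^1_\lambda(S)}(x_1,\dots,x_n)\,s_{\beta^2_\lambda(S)}(x_1,\dots,x_k)$, and Proposition~\ref{prop:galeAndBruhat}(2) identifies $\rank(S)=\ell(w_S)$ with the homological degree. Hence
$$\sum_{i\ge 0}\ch\rH_i(\fn_-;L^{\fgl(V\oplus U)}_\lambda)\,t^i \;=\; \bH^k_\lambda(x_1,\dots,x_n;t).$$

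With this identification in hand, (1) follows from the determinantal factorization $\det A_\lambda(x;t)=(t+1)^k\det A'_\lambda(x;t)$ in Proposition~\ref{prop:typeAprop}. Part (2) is the partition identity of Proposition~\ref{prop:equi-A} translated into sums of Schur products $\ch\bS_{\beta_\lambda(S)}(V,U)$, together with the observation that the equations \eqref{subeqn:binom-id} respect the grouping by $S_{\mathrm{odd}}$. Part (3) is the factorization proposition for $\bH^k_{\lambda,[n+k]_{\mathrm{odd}}}$ applied in the two distinguished cases $k\in\{n-1,n\}$, after noting that the weights $\mu$ and $\nu$ produced there agree on the nose with $\rho^{\mathrm{top}}_\lambda$ and $\rho^{\mathrm{bot}}_\lambda$.

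For (4), I would specialize all $x_i\to 1$ in the factorization from (3) and apply the Weyl dimension formula $s_\mu(1^n)=\prod_{1\le i<j\le n}(\mu_i-\mu_j+j-i)/(j-i)$ to each factor. With $\mu_i=\lambda_{2i-1}-(i-1)$, a typical factor becomes $(\lambda_{2i-1}-\lambda_{2j-1}+2(j-i))/(j-i)$; substituting $a=2i-1$ and $b=2j-1$ rewrites this as $2\bigl(1+(\lambda_a-\lambda_b)/(b-a)\bigr)$. The same happens for $\nu$, producing factors indexed by pairs of even indices. Collecting, the accumulated extra powers of $2$ are $\binom{n}{2}+\binom{k}{2}$, which when combined with the initial $2^k$ from (3) give $2^{k+\binom{n}{2}+\binom{k}{2}}$; for $k\in\{n-1,n\}$ one checks this equals $2^{nk}=2^{\dim(V^*\otimes U)}$, and the remaining product matches the formula in the theorem.

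The main conceptual work is already absorbed into Section~\ref{sec:typeAidentities}; what remains is genuinely bookkeeping. The one point worth emphasizing is the variable specialization: restricting to a torus of dimension $n$ rather than the full $(n+k)$-dimensional Cartan is essential, since the divisibility, equidistribution, and factorization phenomena only manifest after this identification collapses $\ch\bS_{\beta^1}(V)\otimes\bS_{\beta^2}(U)$ into a product of Schur polynomials in overlapping variables.
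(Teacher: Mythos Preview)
Your proposal is correct and matches the paper's approach: the paper likewise notes that parts (1)--(3) are immediate consequences of the results in \S\ref{sec:typeAidentities} (Propositions~\ref{prop:typeAprop}, \ref{prop:equi-A}, and the factorization proposition) once Lemma~\ref{lem:A-homology} identifies the graded character with $\bH^k_\lambda(x_1,\dots,x_n;t)$, and for (4) it specializes (3) via the Weyl dimension formula and collects the powers of $2$ exactly as you describe.
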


\begin{remark}\label{rk:2nCorJustification}
    The statement of Corollary \ref{cor:2n+2n-1 conj} is an immediate consequence of the above dimension formula, since if not all parts of $\lambda$ are equal (i.e., $\lambda$ is not a one-dimensional representation), then $\lambda_i - \lambda_{i+2} \geq 1$ for some $i$. This means a factor of $1 + 1/2 = 3/2$ appears as a factor in the product formula, which yields the desired result.

    Another point worth mentioning is that even though the product formula for the dimension can be stated for arbitrary $k$, this product does \emph{not} give the dimension $\dim \rH_\bullet (V^* \otimes U; L_\lambda^{\fgl (V \oplus U)})$ for general weights $\lambda$ when $k \notin \{ n-1,n \}$. 
\end{remark}

Note that the only thing that requires proof in Theorem \ref{thm:A-rho} is part (4), since the parts (1)-(3) are immediate consequences of the results proved in \S\ref{sec:typeAidentities}, so we delay the proof and illustrate the results with some examples first.

Recall that if a Lie algebra $\fg$ is abelian then $\rU (\fg) \cong \Sym (\fg)$ and there is an equality
$$\rH_\bullet (\fg ; E) = \Tor_\bullet^{\Sym (\fg) } (E , \bC),$$
where $E$ is any representation of $\fg$. Thus in the context of Kostant's theorem for our setting, the parabolic BGG resolutions that we are interested in are minimal free resolutions of finite length modules over a polynomial ring.

\begin{example}\label{ex:pureFreeRes}
    In the special case $k = 1$ (with notation as in Theorem \ref{thm:A-rho}), the parabolic BGG resolutions are resolutions over $\Sym(V^* \otimes U) \cong \Sym (V^*)$, and these recover the pure free resolutions constructed by Eisenbud--Fl\o ystad--Weyman in \cite{EFW}. This fact is mentioned in \cite[Remark 4.6]{FLS}, but we spell out the details explicitly here. For convenience, we reverse the order of $n$ and $k$ and assume $k=1$ (in other words we are looking at the parabolic decomposition induced by isolating the first node in the type A Dynkin diagram).

    Note that when $k = 1$ the Gale ordering is a total ordering, and Kostant's theorem implies that we will obtain resolutions over $\Sym (U)$ with only a single irreducible representation in each homological degree. To obtain a resolution with differentials of pure degree $e_1 , e_2 , e_3 , \dots , e_n$ (respectively), we may choose 
    $$\lambda = (\sum_{i=1}^n e_i - n, \sum_{i=1}^{n-1} e_i - (n-1) , \dots , e_1-1 , 0) \in \bZ^{n+1}.$$
    The dot action on $\lambda$ yields the sequence of partitions
    $\lambda = \lambda^{(0)}, \ \lambda^{(1)}, \dots ,\lambda^{(n)}$
    with
    $$\lambda^{(i)} = (\lambda_{i+1} - i, \underbrace{\lambda_1+1}_{= \lambda_2+e_1}, \underbrace{\lambda_2+1}_{=\lambda_3 + e_2} , \dots , \underbrace{\lambda_i+1}_{= \lambda_{i+1} + e_{i}}, \lambda_{i+2} , \dots , 0 ).$$
    Ignoring the first entry of each $\lambda^{(i)}$, we obtain the sequence of partitions $\alpha (\mathbf{d} , i)$ defined in \cite{EFW}; the first entry of $\lambda^{(i)}$ is simply recording the degree shift of the modules.
\end{example}

\begin{example}
Consider \(\dim V = 2\), \(\dim U = 2\), and the representation
\[
L_{(1,1,0,0)}^{\fgl (V \oplus U)} = \bigwedge^2(V \oplus U).
\]
Its free resolution over \( A = \Sym(V^* \otimes U) \) has the following terms, along with the corresponding subsets \( S \subset [4] \).
\begin{align*}
&\bS_{1,1} V \otimes A, && S = \{1,2\}, \\
&\bS_{1,-1} V \otimes \bS_2 U \otimes A(-2), && S = \{1,3\}, \\
&\bS_{0,-1} V \otimes \bS_3 U \otimes A(-3) 
\ \oplus \ \bS_{1,-2} V \otimes \bS_{2,1} U \otimes A(-3), 
&& \begin{cases}
S = \{2,3\}, \\
S = \{1,4\},
\end{cases} \\
&\bS_{0,-2} V \otimes \bS_{3,1} U \otimes A(-4), && S = \{2,4\}, \\
&\bS_{2,-2} V \otimes \bS_{3,3} U \otimes A(-6), && S = \{3,4\}.
\end{align*}
Grouping the terms according to the parity of the odd elements of \( S \) yields representations of the same dimension. For example, when \( U = V \), the summand
$\bS_{1,-1} V \otimes \bS_2 V$
appears in the decomposition. On the other hand, the top and bottom $\rho$-weights satisfy
$\rho_\lambda^{\mathrm{top}} = (1,-1)$ and  $\rho_\lambda^{\mathrm{bot}} = (2,0)$,
so the decomposition agrees with the prediction of Theorem~\ref{thm:A-rho}.
\end{example}

\begin{example}
Consider \(\dim V = 2\), \(\dim U = 2\), and the representation
\[
L_{(2,0,0,0)}^{\fgl (V \oplus U)} = \Sym^2(V \oplus U).
\]
Its free resolution over \( A = \Sym(V^* \otimes U) \) has the following terms, along with the corresponding subsets \( S \subset [4] \):
\begin{align*}
&\bS_{2} V \otimes A, && S = \{1,2\}, \\
&\bS_{2,-1} V \otimes U \otimes A(-1), && S = \{1,3\}, \\
&\bS_{2,-2} V \otimes \bS_{1,1} U \otimes A(-2) \ 
\oplus \  \bS_{-1,-1} V \otimes \bS_{3,1} U \otimes A(-4), 
&& \begin{cases}
S = \{2,3\}, \\
S = \{1,4\},
\end{cases} \\
&\bS_{-1,-2} V \otimes \bS_{4,1} U \otimes A(-5), && S = \{2,4\}, \\
&\bS_{-2,-2} V \otimes \bS_{4,2} U \otimes A(-6), && S = \{3,4\}.
\end{align*}

Grouping the terms according to the parity of the odd entries of \( S \) yields representations of the same dimension. When \( U = V \), the summand $\bS_{2,-1} V \otimes \bS_1 V$ appears in the decomposition. The top and bottom $\rho$-weights are $\rho_\lambda^{\mathrm{top}} = (2,-1)$ and $\rho_\lambda^{\mathrm{bot}} = (1,0)$,
so the decomposition agrees with the prediction of Theorem~\ref{thm:A-rho}.
\end{example}

\begin{proof}[Proof of Theorem \ref{thm:A-rho}]
  \textbf{Proof of (4):} We use (3) and plug into the type A dimension formulas provided by \cite[Chapter 24]{fultonharris} to find:
\[
  \dim L^{\fgl_n}_{\rho^{\rm top}_\lambda} = \prod_{1 \leq i < j \leq n} \frac{\lambda_{2i-1} - \lambda_{2j-1} + 2(j-i)}{j-i}, \qquad
  \dim L^{\fgl_k}_{\rho^{\rm bot}_\lambda} = \prod_{1 \leq i < j \leq k} \frac{\lambda_{2i} - \lambda_{2j} + 2(j-i)}{j-i}.
  \]
  Pulling out a factor of $2$ from all terms in the above products gives a factor of $2^{\binom{n}{2} + \binom{k}{2}}$. If $k = n$ then this becomes $2^{n^2-n}$, which, combined with the extra factor of $2^n$ gives $2^{n^2} = 2^{\dim (V^* \otimes U)}$. If $k = n-1$ then the powers of $2$ instead combine to give $2^{n(n-1)} = 2^{\dim (V^* \otimes U)}$. 
\end{proof}

\subsection{Special case: Exterior Algebra}

Now we consider the special case of Theorem~\ref{thm:A-rho} with $\lambda = 0$ and $k=n$, where the combinatorics may be fleshed out more explicitly. In this case, $\rH_i(U^* \otimes V; \bC) = \bigwedge^i(U^* \otimes V)$. Recall that the Cauchy identity gives the isomorphism
\[
  \bigwedge^\bullet(U^* \otimes V) = \bigoplus_{\lambda \subseteq n \times n} \bS_\lambda(U^*) \otimes \bS_{\lambda^T}(V).
\]
In the previous section, by taking $\lambda = 0$ and $V = U$, we get a partition of the set of $\lambda$ into $2^n$ blocks so that each sum of representations in each block is isomorphic to $\bS_\rho(V^*) \otimes \bS_\rho(V)$, where $\rho = \rho^\rA = (n-1, n-2, \dots , 0)$. 

Recall that there is a bijection between $n$-element subsets of $[2n]$ and $\lambda \subseteq n \times n$. Concretely, a partition $\lambda$ is determined by an integer walk from $(0,0)$ to $(n,n)$ that only goes up and to the right ($\lambda$ is the portion above the walk). Such a walk is determined by a subset by recording the positions that go up.

Given a subset $S$, let $\alpha(S)$ be the partition corresponding to the walk associated with $S$. Concretely, $\alpha(\{s_1,\dots,s_n\})$ is the partition $\lambda = (s_n-n, s_{n-1}-n+1 , \dots , s_1-1)$ if $s_1<\cdots<s_n$, and taking complements has the effect of taking the transpose of the complement of $\lambda$ in an $n \times n$ box (since horizontal steps become vertical steps in the transposed path). 

\begin{example}
  Let $n=4$. Consider the path of length \( 8 = 4+4 \) with up-steps at positions in \( S = \{1,2,6,8\} \) and right-steps elsewhere. This gives the following shape above the path:
\[
  \text{Path: } U\ U\ R\ R\ R\ U\ R\ U \qquad 
\begin{tikzpicture}[scale=0.3]
  \draw[step=1,gray,very thin] (0,0) grid (4,4);
  
  \fill[orange!40] (0,2) rectangle (3,4);
  \fill[orange!40] (3,3) rectangle (4,4);
  
  \draw[thick] (0,0) -- (0,2) -- (3,2) -- (3,3) -- (4,3) -- (4,4);
\end{tikzpicture}
\]
Thus we find \(\alpha(S) = (4,3) \).
\end{example}

As it turns out, there is a simple relationship between $\alpha(S)$ and the weights appearing in the representation $\bS_{\beta_0 (S)}$:

\begin{lemma}
  Let \( S \subseteq [2n] \) be a subset of size \( n \). We have:
\[
\bS_{\beta_0(S)}(V,V) = \bS_{\alpha(S)}(V^*) \otimes \bS_{\alpha(S)^T}(V).
\]
\end{lemma}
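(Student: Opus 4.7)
My plan is to unpack the definitions of $\beta^1_0(S)$ and $\beta^2_0(S)$ from \S\ref{sec:A-setup} (with $\lambda=0$ and $k=n$) and match each factor of the tensor product separately. Write $S=\{s_1<\cdots<s_n\}$ and $S^c=\{s'_1<\cdots<s'_n\}$. Since the $j$-th entry of $\delta_{2n}$ is $2n-j$, I compute directly
\[
\beta^1_0(S)_j \;=\; (2n-s_j)-(n-j)-n \;=\; j-s_j,
\]
so $\beta^1_0(S)=-(s_1-1,\,s_2-2,\,\ldots,\,s_n-n)$. Reversing this $n$-tuple yields precisely $\alpha(S)=(s_n-n,\,s_{n-1}-(n-1),\,\ldots,\,s_1-1)$, i.e.\ $\beta^1_0(S)=-\alpha(S)^{\rm op}$. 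Using the standard identity $\bS_\mu(V^*)\cong \bS_{-\mu^{\rm op}}(V)$ (which is immediate from the fact that the lowest weight of $\bS_\mu V$ is $\mu^{\rm op}$, so the highest weight of its dual is $-\mu^{\rm op}$), I conclude $\bS_{\beta^1_0(S)}(V)\cong \bS_{\alpha(S)}(V^*)$.

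For the second factor, an analogous direct computation gives $\beta^2_0(S)_j=n+j-s'_j$, so it remains to verify $\alpha(S)^T_j=n+j-s'_j$. Starting from $\alpha(S)_i=s_{n+1-i}-(n+1-i)$, I reindex to obtain
\[
\alpha(S)^T_j \;=\; \#\{i\in[n] : \alpha(S)_i\ge j\} \;=\; \#\{k\in[n] : s_k\ge k+j\}.
\]
In the associated lattice path, the condition $s_k\ge k+j$ says that by the time of the $k$-th up-step, at least $j$ right-steps have occurred; equivalently, $s'_j<s_k$. Hence $\alpha(S)^T_j$ counts the elements of $S$ lying in $(s'_j,2n]$. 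This interval contains $2n-s'_j$ integers, of which $n-j$ lie in $S^c$ (namely $s'_{j+1},\ldots,s'_n$), leaving $2n-s'_j-(n-j)=n+j-s'_j$ in $S$, matching $\beta^2_0(S)_j$. Thus $\bS_{\beta^2_0(S)}(V)=\bS_{\alpha(S)^T}(V)$, and tensoring the two factors proves the lemma.

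The only non-mechanical point is the conjugation identity $\alpha(S)^T_j=n+j-s'_j$, which is the standard Young-diagram-transposition statement for the lattice-path encoding; the rest is a careful unwinding of definitions. What makes the presence of $V^*$ (as opposed to $V$) natural, rather than an artifact, is the shift by $(n,\ldots,n)$ in $\beta^1_0$: this shift forces the resulting weight to have negative entries, so the duality $\bS_\mu(V^*)\cong \bS_{-\mu^{\rm op}}(V)$ is essential to recognize the first factor as an irreducible representation of $V^*$.
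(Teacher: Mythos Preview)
Your proof is correct and follows essentially the same approach as the paper: both compute $\beta^1_0(S)=(1-s_1,\dots,n-s_n)$ and $\beta^2_0(S)=(n+1-s'_1,\dots,2n-s'_n)$ and identify these as the highest weights of $\bS_{\alpha(S)}(V^*)$ and $\bS_{\alpha(S)^T}(V)$ respectively. Your version is more explicit in verifying the transpose identity $\alpha(S)^T_j=n+j-s'_j$ via a counting argument, whereas the paper simply asserts the identification (relying on the remark preceding the lemma that passing to $S^c$ transposes the complementary partition).
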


\begin{proof}
Write $S = \{ s_1 < \cdots < s_n \}$ and $[2n] \backslash S = \{ s_1' < \cdots < s_{n}' \}$. By definition of $\delta_{2n}= \delta = (2n-1 , 2n-2, \dots , 0)$ there is an equality
\[
  (\delta|_S  , \delta|_{S^c}) = (2n-s_1 , 2n-s_2 , \dots , 2n-s_n ,  2n - s_1' , \dots , 2n-s_n' ),
\]
and thus after subtracting $\delta$ we obtain the tuple
\[
  (\delta|_S  , \delta|_{S^c}) - \delta = ( 1 - s_1, \dots , n- s_n , n+1-s_1', \dots , 2n - s_n' ).
\]
The tuple $( 1 - s_1, \dots , n- s_n )$ is the highest weight of $\bS_{\alpha(S)} (V^*)$ and $( n+1-s_1', \dots , 2n - s_n' )$ is the highest weight of $\bS_{\alpha(S)^T} (V)$, which yields the result. 
\end{proof}

\begin{example}\label{ex:typeAgrouping}
We give an explicit example of pairing up the factors appearing in the decomposition of $\bigwedge^\bullet (\fgl (V))$ when $\dim V = 3$. In the following figure, we list:
\begin{itemize}
    \item All subsets $T \subseteq \{ 1,3,5 \}$,
    \item For a given $T$, all $S$ such that $S_{\rm odd}= T$,
    \item For a given $S$, the associated partition $\alpha(S)$ (written in red), and
    \item For a given partition $\lambda$, the weights showing up in the direct sum decomposition of the rational representation $\bS_\lambda (V^*) \otimes \bS_{\lambda^T} (V)$.
\end{itemize}

{ \ytableausetup{boxsize = 0.5em}

\begin{center}
\begin{tikzpicture}[>=latex]


  \node (top) at (0, 4.2) {\scriptsize 
  \begingroup\renewcommand{\arraystretch}{1.5}%
  \begin{tabular}{@{}l@{}}
      \textbf{$\varnothing$:} \\[0.7em]
      \shortstack[l]{\{2,4,6\}\\ \hspace{0.9em}{\color{red}\ydiagram{3,2,1}} $= [2;2] + [2;1,1] + [1,1;2] + 2 [1;1] + [1;1]$}
    \end{tabular}
  \endgroup
  };

  \node (left) at (-5.5, 1.0) {\scriptsize
  \begingroup\renewcommand{\arraystretch}{1.5}%
    \begin{tabular}{@{}l@{}}
      \textbf{\{1\}:} \\[0.7em]
      \shortstack[l]{\{\textbf{1},2,4\}\\ \hspace{0.9em}{\color{red}\ydiagram{1}} $= [1;1] + [0;0]$} \\[0.9em]
      \shortstack[l]{\{\textbf{1},2,6\}\\ \hspace{0.9em}{\color{red}\ydiagram{3}} $= [2;1,1]$} \\[0.9em]
      \shortstack[l]{\{\textbf{1},4,6\}\\ \hspace{0.9em}{\color{red}\ydiagram{3,2}} $= [2;2] + [1,1;2] + [1;1]$}
    \end{tabular}
  \endgroup
  };

  \node (middle) at (0, 1.0) {\scriptsize
  \begingroup\renewcommand{\arraystretch}{1.5}%
    \begin{tabular}{@{}l@{}}
      \textbf{\{3\}:} \\[0.7em]
      \shortstack[l]{\{2,\textbf{3},4\}\\ \hspace{0.9em}{\color{red}\ydiagram{1,1,1}} $= [1,1;2]$} \\[0.9em]
      \shortstack[l]{\{2,\textbf{3},6\}\\ \hspace{0.9em}{\color{red}\ydiagram{3,1,1}} $= [2;2] + [1;1] + [0;0]$} \\[0.9em]
      \shortstack[l]{\{\textbf{3},4,6\}\\ \hspace{0.9em}{\color{red}\ydiagram{3,2,2}} $= [2;1,1] + [1;1]$}
    \end{tabular}
  \endgroup
  };

  \node (right) at (5.5, 1.0) {\scriptsize
  \begingroup\renewcommand{\arraystretch}{1.5}%
    \begin{tabular}{@{}l@{}}
      \textbf{\{5\}:} \\[0.7em]
      \shortstack[l]{\{2,4,\textbf{5}\}\\ \hspace{0.9em}{\color{red}\ydiagram{2,2,1}} $= [2;2] + [2;1,1] + [1;1]$} \\[0.9em]
      \shortstack[l]{\{2,\textbf{5},6\}\\ \hspace{0.9em}{\color{red}\ydiagram{3,3,1}} $= [1,1;2] + [1;1]$} \\[0.9em]
      \shortstack[l]{\{4,\textbf{5},6\}\\ \hspace{0.9em}{\color{red}\ydiagram{3,3,3}} $= [0;0]$}
    \end{tabular}
  \endgroup
  };

  \node (leftb) at (-5.5,-3.8) {\scriptsize
  \begingroup\renewcommand{\arraystretch}{1.5}%
    \begin{tabular}{@{}l@{}}
      \textbf{\{1,3\}:} \\[0.7em]
      \shortstack[l]{\{\textbf{1},2,\textbf{3}\}\\ \hspace{0.9em}{\color{red} $\varnothing$} $= [0;0]$} \\[0.9em]
      \shortstack[l]{\{\textbf{1},\textbf{3},4\}\\ \hspace{0.9em}{\color{red}\ydiagram{1,1}} $= [1,1;2] + [1;1]$} \\[0.9em]
      \shortstack[l]{\{\textbf{1},\textbf{3},6\}\\ \hspace{0.9em}{\color{red}\ydiagram{3,1,1}} $= [2;2] + [2;1,1] + [1;1]$}
    \end{tabular}
  \endgroup
  };

  \node (middleb) at (0,-3.8) {\scriptsize
  \begingroup\renewcommand{\arraystretch}{1.5}%
    \begin{tabular}{@{}l@{}}
      \textbf{\{1,5\}:} \\[0.7em]
      \shortstack[l]{\{\textbf{1},2,\textbf{5}\}\\ \hspace{0.9em}{\color{red}\ydiagram{2}} $= [2;1,1] + [1;1]$} \\[0.9em]
      \shortstack[l]{\{\textbf{1},4,\textbf{5}\}\\ \hspace{0.9em}{\color{red}\ydiagram{2,2}} $= [2;2] + [1;1] + [0;0]$} \\[0.9em]
      \shortstack[l]{\{\textbf{1},\textbf{5},6\}\\ \hspace{0.9em}{\color{red}\ydiagram{3,3}} $= [1,1;2]$}
    \end{tabular}
  \endgroup
  };

  \node (rightb) at (5.5,-3.8) {\scriptsize
  \begingroup\renewcommand{\arraystretch}{1.5}%
    \begin{tabular}{@{}l@{}}
      \textbf{\{3,5\}:} \\[0.7em]
      \shortstack[l]{\{2,\textbf{3},\textbf{5}\}\\ \hspace{0.9em}{\color{red}\ydiagram{2,1,1}} $= [2;2] + [1,1;2] + [1;1]$} \\[0.9em]
      \shortstack[l]{\{\textbf{3},4,\textbf{5}\}\\ \hspace{0.9em}{\color{red}\ydiagram{2,2,2}} $= [2;1,1]$} \\[0.9em]
      \shortstack[l]{\{\textbf{3},\textbf{5},6\}\\ \hspace{0.9em}{\color{red}\ydiagram{3,3,2}} $= [1;1] + [0;0]$}
    \end{tabular}
  \endgroup
  };

  \node (middlec) at (0, -6.9) {\scriptsize
  \begingroup\renewcommand{\arraystretch}{1.5}%
    \begin{tabular}{@{}l@{}}
      \textbf{\{1,3,5\}:} \\[0.7em]
      \shortstack[l]{\{\textbf{1},\textbf{3},\textbf{5}\}\\ \hspace{0.9em}{\color{red}\ydiagram{2,1}} $= [2;2] + [2;1,1] + [1,1;2] + 2 [1;1] + [1;1]$}
    \end{tabular}
  \endgroup
  };

  \draw[-] (top) -- (left);
  \draw[-] (top) -- (middle);
  \draw[-] (top) -- (right);

  \draw[-] (left) -- (leftb);
  \draw[-] (left) -- (middleb);
  \draw[-] (middle) -- (leftb);
  \draw[-] (middle) -- (rightb);
  \draw[-] (right) -- (rightb);
  \draw[-] (right) -- (middleb);

  \draw[-] (middleb) -- (middlec);
  \draw[-] (leftb) -- (middlec);
  \draw[-] (rightb) -- (middlec);

\end{tikzpicture}
\end{center}}

Just focusing on the rational weights, it is easy to verify directly that each block adds up to the same representation, and there are evidently $8 = 2^3$ total blocks.
\end{example}

\section{Type BCD generalized $\rho$-decomposition}\label{sec:typeBCDrhodecomp}

\subsection{Setup}

Let $V$ be an $m$-dimensional complex vector space. As before, write $m = 2n+\eps$ with $\eps \in \{0,1\}$. We consider three cases:

\medskip\noindent
{\bf Type B:} Define $\bV = V^* \oplus \bC \oplus V$, and equip it with the orthogonal form
\[
  \langle (f, e, v), (f', e', v') \rangle = f(v') + ee' + f'(v).
\]
Let $\fg = \fso(\bV) \cong \fso_{2m+1}$ be the orthogonal Lie algebra. Consider the parabolic subalgebra induced by the decomposition
\[
  \fg = \bigwedge^2 (\bV) = \underbrace{\bigwedge^2 V^* \oplus V^*}_{\fn_-} \oplus \underbrace{\fgl (V)}_{\fl} \oplus \underbrace{V \oplus \bigwedge^2 V}_{\fn}.
\]
We take $\gamma = \tfrac12$ and let $G = \Spin(2m+1, \bC)$ denote the spin group (the simply-connected Lie group whose Lie algebra is $\fg$).

\medskip\noindent
{\bf Type C:} Define $\bV = V^* \oplus V$, and equip it with the symplectic form
\[
  \langle (f, v), (f',v') \rangle = f(v') - f'(v).
\]
Let $\fg = \fsp(\bV) \cong \fsp_{2m}$ be the symplectic Lie algebra. Consider the parabolic subalgebra induced by the decomposition
\[
  \fg = \Sym^2 (\bV) = \underbrace{\Sym^2 V^*}_{\fn_-} \oplus \underbrace{\fgl (V)}_{\fl} \oplus \underbrace{\Sym^2 V}_{\fn}.
\]
We take $\gamma = 0$ and let $G = \Sp(2m, \bC)$ denote the symplectic group (simply-connected).

\medskip\noindent
{\bf Type D:} Define $\bV = V^* \oplus V$, and equip it with the orthogonal form
\[
  \langle (f, v), (f',v') \rangle = f(v') + f'(v).
\]
Let $\fg = \fso(\bV) \cong \fso_{2m}$ be the orthogonal Lie algebra. Consider the parabolic subalgebra induced by the decomposition
\[
  \fg = \bigwedge^2 (\bV) = \underbrace{\bigwedge^2 V^*}_{\fn_-} \oplus \underbrace{\fgl (V)}_{\fl} \oplus \underbrace{\bigwedge^2 V}_{\fn}.
\]
We take $\gamma=1$ and let $G = \Pin(2m,\bC)$ denote the pin group, whose identity component is the simply-connected form of the Lie group of $\fg$.

\subsection{Weyl group}

Let $W = (\bZ/2)^m \rtimes \fS_m$ be the hyperoctahedral group, which acts on $\bC^m$ as signed permutation matrices. This is the Weyl group in types B and C, but a variation is needed in type D. First we will flesh out the relevant combinatorics for $W^\fp$. Given a set $S$, we let $\cP(S)$ denote its lattice of subsets.

  \begin{definition}[Extended Gale Order]\label{def:extendedGale}
Let \( A, B \subseteq [m] \), and write their elements in increasing order:
$A = \{a_1 < a_2 < \dots < a_k\}$ and $B = \{b_1 < b_2 < \dots < b_\ell\}$.
﻿
We define the \defi{extended Gale order} \( \leq_{\text{Gale}} \) on \( \mathcal{P}([m]) \) by:
\( A \leq_{\text{Gale}} B \) if there exist indices \( 1 \leq j_1 < j_2 < \dots < j_k \leq \ell \) such that 
$a_i \leq b_{j_i}$ for all  $i = 1, \dots, k$.
The rank of a given subset $S = \{ a_1 , \dots , a_\ell \} \subseteq [m]$ is 
\[
  \rank S = \Sigma(S) := \sum_{i} a_i. \qedhere
\]
\end{definition}
Here \(\rank S=\Sigma(S)\) matches the Coxeter length on \(W^\fp\) (see Proposition~\ref{prop:typeCgaleAndBruhat}); compare with the fixed-size Gale order in type~A.

\begin{figure}
  \[
    \begin{array}{c|c}
          \adjustbox{scale=.7}{
      \begin{tikzcd}
	& {\{1,2,3 \}} \\
	& {\{2,3\}} \\
	& {\{1,3 \}} \\
	{\{1,2\}} && {\{3\}} \\
	& {\{2\}} \\
	& {\{1\}} \\
	& \varnothing
	\arrow[no head, from=1-2, to=2-2]
	\arrow[no head, from=2-2, to=3-2]
	\arrow[no head, from=3-2, to=4-1]
	\arrow[no head, from=3-2, to=4-3]
	\arrow[no head, from=4-1, to=5-2]
	\arrow[no head, from=4-3, to=5-2]
	\arrow[no head, from=5-2, to=6-2]
	\arrow[no head, from=7-2, to=6-2]
      \end{tikzcd}}
        &
      \adjustbox{scale=.7}{\begin{tikzcd}
	& {\{1,2,3,4 \}} \\
	& {\{3,4\}} \\
	& {\{2,4\}} \\
	{\{2,3\}} && {\{1,4\}} \\
	& {\{1,3\}} \\
	& {\{1,2\}} \\
	& \varnothing
	\arrow[no head, from=1-2, to=2-2]
	\arrow[no head, from=2-2, to=3-2]
	\arrow[no head, from=3-2, to=4-1]
	\arrow[no head, from=3-2, to=4-3]
	\arrow[no head, from=4-1, to=5-2]
	\arrow[no head, from=4-3, to=5-2]
	\arrow[no head, from=5-2, to=6-2]
	\arrow[no head, from=7-2, to=6-2]
      \end{tikzcd}
\qquad
           \begin{tikzcd}
	& {\{2,3,4 \}} \\
	& {\{1,3,4\}} \\
	& {\{1,2,4\}} \\
	{\{1,2,3\}} && {\{4\}} \\
	& {\{3\}} \\
	& {\{2\}} \\
	& \{1\}
	\arrow[no head, from=1-2, to=2-2]
	\arrow[no head, from=2-2, to=3-2]
	\arrow[no head, from=3-2, to=4-1]
	\arrow[no head, from=3-2, to=4-3]
	\arrow[no head, from=4-1, to=5-2]
	\arrow[no head, from=4-3, to=5-2]
	\arrow[no head, from=5-2, to=6-2]
	\arrow[no head, from=7-2, to=6-2]
      \end{tikzcd}}
    \end{array}
  \]
  \caption{Left: Hasse diagram for extended Gale order on $\cP([3])$.\\ Right: Hasse diagram for semi-extended Gale order on $\cP([4])$.}
  \label{fig:hasse}
\end{figure}
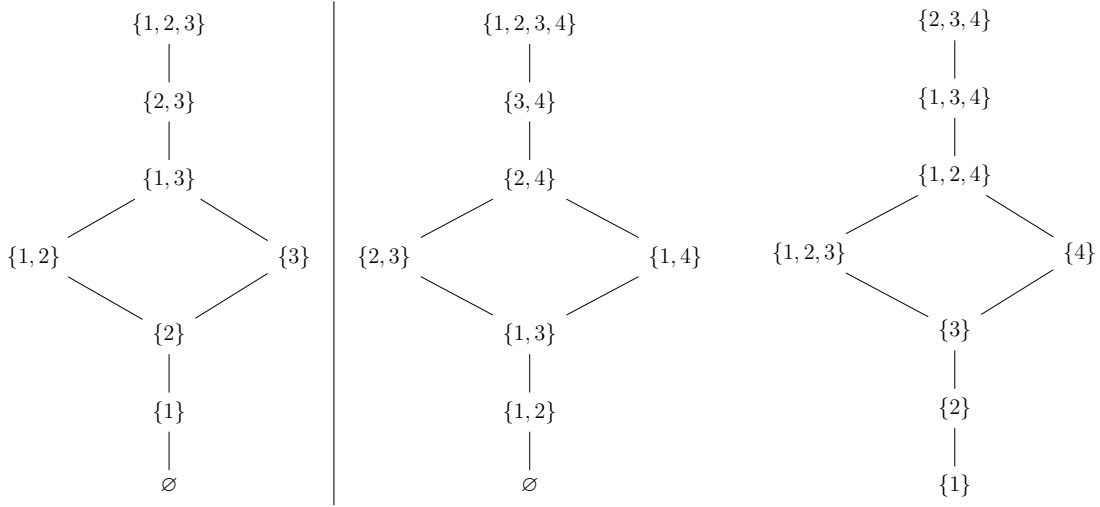
Figure~\ref{fig:hasse} shows the Hasse diagram for the extended Gale order on $\mathcal{P} ([3])$.

\begin{proposition}\label{prop:typeCgaleAndBruhat}
  \begin{enumerate}
  \item Given $T \subseteq [m]$, define $w_T$ by $w_T^{-1} \bullet 0 = \beta_0(T^{\inv})$ (where $\beta_0 (-)$ is defined in \S\ref{sec:typeBCDidentities}). Then $T \mapsto w_T$ is an order-preserving bijection between $\cP([m])$ equipped with the extended Gale order and $W^\fp$.
\item For all $T$, we have $\ell(w_T) = \Sigma(T) = \sum_{t \in T} t$.
\end{enumerate}
\end{proposition}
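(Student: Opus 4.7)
The plan is to construct $w_T$ explicitly as a signed permutation, verify directly that $w_T^{-1}\bullet 0 = \beta_0(T^\inv)$, and then prove the bijection, the length identity, and the order-preservation simultaneously by matching covering relations.

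Writing $T^\inv = \{a_1 < \cdots < a_k\}$ and $[m] \setminus T^\inv = \{b_1 < \cdots < b_{m-k}\}$, I would define $w_T$ in the hyperoctahedral group $B_m$ (the Weyl group appropriate to all three types, consistent with the use of $\Pin_{2m}$ in type D) by requiring $w_T^{-1}(e_{b_i}) = e_i$ for each $i = 1,\dots,m-k$ and $w_T^{-1}(e_{a_j}) = -e_{m+1-j}$ for each $j = 1,\dots,k$. A direct calculation confirms that $w_T^{-1}(\delta)$ is the weakly decreasing rearrangement of $\iota_0(T^\inv)$, so that $w_T^{-1}\bullet 0 = \beta_0(T^\inv)$; since successive entries of $\delta$ differ by exactly $1$, the tuple $\beta_0(T^\inv)$ is automatically weakly decreasing, i.e., dominant for $\fl = \fgl_m$, placing $w_T$ in $W^\fp$.

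For the bijection, types B and C are straightforward: $\delta$ has distinct nonzero entries, so the $2^m$ signed permutations produced above are distinct elements of $W^\fp$, and since $|W/W_\fp| = 2^m$ the map is bijective. In type D, one has $\delta_m = 0$ and the vector $w_T^{-1}\bullet 0$ alone does not separate $T$ from $T \triangle \{1\}$; however, the signed permutation $w_T$ itself does, via the sign of $w_T^{-1}(e_m)$, which is negative precisely when $1 \in T$. This additional refinement is exactly what the Pin convention makes available, and it restores bijectivity with $W^\fp$.

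For part (2) together with order-preservation, I would argue by induction on $\Sigma(T)$ via the covering relations. An extended Gale cover of $S$ is either (i) adjoining $1$ to $S$ (when $1 \notin S$), or (ii) replacing some $k \in S$ by $k+1$ (when $k+1 \notin S$); both raise $\Sigma$ by $1$. Unwinding the construction: case (i) merely toggles the sign of $w_S^{-1}(e_m)$, so $w_T = s_m\, w_S$ where $s_m$ is the ``sign-flip on the last coordinate'' simple reflection of $B_m$, and the length rises by $1$ because $w_S^{-1}(e_m)$ is a positive root. Case (ii) swaps the images $w_S^{-1}(e_{m-k})$ and $w_S^{-1}(e_{m+1-k})$, which amounts to a right-multiplication $w_T = w_S\, s_\alpha$ by a reflection through a specific positive root $\alpha$ (generally not simple), and one checks that $w_S(\alpha) > 0$ so that the length again rises by $1$. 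In both cases, $w_T$ remains in $W^\fp$. Since $w_\varnothing = e$ has length $0 = \Sigma(\varnothing)$, induction gives $\ell(w_T) = \Sigma(T)$ and simultaneously identifies the extended Gale order on $\cP([m])$ with the Bruhat order on $W^\fp$.

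The principal technical hurdle is case (ii): pinpointing the precise positive root $\alpha$, verifying the positivity condition $w_S(\alpha) > 0$, and confirming that $w_S\, s_\alpha$ remains in $W^\fp$. This requires a modest case analysis on the relative positions of $k$ and $k+1$ inside $T^\inv$ versus its complement. The type D situation is absorbed uniformly into this framework once $W = B_m$ is adopted, with the simple reflection $s_m$ playing the same role in the ``$1$-addition'' cover as it does in types B and C.
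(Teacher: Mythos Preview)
Your approach is essentially the same as the paper's: both arguments proceed by analyzing the two types of covering relations in the extended Gale order (adjoining $1$, or replacing $k$ by $k+1$) and showing that each corresponds to multiplying $w_S$ by a reflection that raises length by exactly one. Your version is considerably more explicit than the paper's sketch, in particular by giving the signed-permutation formula for $w_T$ up front and by treating the type~D subtlety with $\delta_m=0$ directly.

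One small correction: in case~(ii), the computation you describe (swapping $w_S^{-1}(e_{m-k})$ with $w_S^{-1}(e_{m+1-k})$) actually gives $w_T = s_{m-k}\, w_S$, left multiplication by the \emph{simple} reflection swapping positions $m-k$ and $m+1-k$; rewriting it as a right multiplication $w_T=w_S s_\alpha$ is of course equivalent, but then $\alpha=w_S^{-1}(e_{m-k}-e_{m+1-k})$ and the positivity check becomes $w_S(\alpha)=e_{m-k}-e_{m+1-k}>0$, which is automatic. So the ``modest case analysis'' you flag as the main hurdle is actually unnecessary once you frame it as a left multiplication.
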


\begin{proof}
    Assume that $S = \{ s_1 < \cdots < s_p \}$ is covered by $T = \{ t_1 < \cdots < t_q \}$ with respect to the extended Gale ordering. By definition, we either have that $T = \{ 1  < s_1 < s_2 < \cdots < s_p \}$, or there is a distinguished $1 \leq i \leq q$ such that $t_j = s_j$ for all $j \neq i$ and $t_i = s_i+1$. In the latter case, the fact that $\ell(w_T) = \ell(w_S)+1$ is identical to the argument used in the proof of Proposition \ref{prop:galeAndBruhat}. In the former case, $w_{T}$ is related to $w_{S}$ by multiplying $w_S$ on the right by the element that negates position $m$. By definition of the length function it follows that $\ell (w_T) = \ell (w_S) + 1$. The statement about ranks is now an immediate consequence.
\end{proof}

It turns out that another variant of the Gale order describes the poset structure induced by the Bruhat order for type D. First, we recall that $W(\rD_m)$ is the index 2 subgroup of $W(\rC_m)$ consisting of signed permutations with an even number of signs. We let $\sigma$ denote the signed permutation that negates the $m$th coordinate and is the identity elsewhere. This is a representative for the nontrivial coset. We define $\sigma W^\fp = \{ \sigma w \mid w \in W^\fp\}$. This inherits an ordering by $\sigma w \le \sigma w'$ if and only if $w \le w'$ (Bruhat order for $W(\rD_m)$). By abuse of notation, we will call this the Bruhat order on $W^\fp \cup \sigma W^\fp$, but we emphasize that this does not agree with the Bruhat order coming from $W(\rC_m)$.

  \begin{definition}[Semi-extended Gale Order]\label{def:semi-extendedGale}
Let \( A, B \subseteq [m] \), and write their elements in increasing order:
$A = \{a_1 < a_2 < \dots < a_k\}$ and $B = \{b_1 < b_2 < \dots < b_\ell\}$.

We define the \defi{semi-extended Gale order} \( \leq_{\text{sGale}} \) on \( \mathcal{P}([m]) \) by:
\( A \leq_{\text{sGale}} B \) if $k\equiv \ell \pmod 2$ and there are indices \( 1 \leq j_1 < j_2 < \dots < j_k \leq \ell \) such that 
$a_i \leq b_{j_i}$ for all  $i = 1, \dots, k$.
The rank of a given subset $S = \{ a_1 , \dots , a_\ell \} \subseteq [m]$ is given by
\[
  \rank S = \Sigma(S)-|S| = \sum_{i} (a_i-1). \qedhere
\]
\end{definition}

Figure~\ref{fig:hasse} shows the Hasse diagram for the semi-extended Gale order on $\mathcal{P} ([4])$. Note that there is an obvious order-preserving bijection between the two components: $T \mapsto T \cup \{1\}$ if $1 \notin T$ and $T \mapsto T \setminus \{1\}$ otherwise.

  Let $\cP([m])$ denote the set of subsets of $[m]$, endowed with the semi-extended Gale order. We let $\cP_{\rm even}([m])$, respectively $\cP_{\rm odd}([m])$, denote the collection of even, respectively odd, sized subsets.

\begin{proposition}
  \begin{enumerate}
  \item Given $T \subseteq [m]$, define $w_T$ by $w_T^{-1} \bullet 0 = \beta_0(T^{\inv})$. Then $T \mapsto w_T$ is an order-preserving bijection between $\cP([m])$ equipped with the semi-extended Gale order and $W^\fp \cup \sigma W^\fp$. Under this bijection, $\cP_{\rm even}([m])$ is identified with $W^\fp$.
\item For all $T$, we have $\ell(w_T) = \Sigma(T) - |T| = \sum_{t \in T} (t-1)$.
\end{enumerate}
\end{proposition}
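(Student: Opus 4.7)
The plan is to piggyback on the preceding Proposition (for the extended Gale order and type~C) by establishing that $W^\fp \cup \sigma W^\fp$, viewed as a subset of $W(\rC_m)$, equals the type-C analogue $W^\fp_\rC$ from that proposition. The same combinatorial assignment $T \mapsto w_T$ then serves in both contexts, and only the parity decomposition, length formula, and finer Bruhat structure need to be re-examined under the type-D conventions.

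First, I would prove the set identification $W^\fp \cup \sigma W^\fp = W^\fp_\rC$. The key observation is that $\sigma \bullet 0 = 0$ in type~D: since $\rho^\rD_m = 0$, negating position~$m$ leaves $\rho^\rD$ fixed. Hence for any $u \in W(\rC_m)$, $(\sigma u)^{-1} \bullet 0 = u^{-1} \bullet 0$, so the $\fgl_m$-dominance condition is satisfied by $u$ iff by $\sigma u$. Combined with the minimal-length observation that $\ell_\rC - \ell_\rD$ is constant on each $\fS_m$-coset (since right multiplication by $\fS_m \subset W(\rD_m)$ preserves sign counts), one obtains $W^\fp_\rC \cap W(\rD_m) = W^\fp$ and $W^\fp_\rC \setminus W(\rD_m) = \sigma W^\fp$; a cardinality check ($2 \cdot 2^{m-1} = 2^m$) closes the argument. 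From the proof of the preceding proposition, one reads off that $w_T$ has exactly $|T|$ sign changes, so $w_T \in W(\rD_m)$ iff $|T|$ is even, identifying $\cP_{\rm even}([m])$ with $W^\fp$. The length formula $\ell(w_T) = \Sigma(T) - |T|$ then follows from the general identity $\ell_\rD(w) = \ell_\rC(w) - \#(\text{sign changes of }w)$---a consequence of the fact that the positive roots of type~$\rC$ are those of type~$\rD$ together with the $m$ long roots $2\epsilon_i$---combined with the preceding proposition's formula $\ell_\rC(w_T) = \Sigma(T)$.

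For order preservation, the semi-extended Gale order has two families of cover relations. The standard Gale covers ($s_i \mapsto s_i + 1$, same size) correspond, as in the preceding proof, to left multiplication of $w_S$ by an adjacent transposition in $\fS_m \subset W(\rD_m)$, which is a type-D simple reflection. For the new size-changing covers (forced by the rank condition to be of the form $T = S \cup \{1, 2\}$ with $1, 2 \notin S$), I claim $w_T = \tilde{s} \cdot w_S$, where $\tilde{s}$ is the type-D simple reflection acting on the last two coordinates by swap-and-negate. The proof is a direct sort comparison: $\iota_0^\rC(T^\inv)$ differs from $\iota_0^\rC(S^\inv)$ only by replacing the multiset entries $\{1, 2\}$ with $\{-1, -2\}$, which is precisely the effect of first applying $\tilde{s}$ to $\rho^\rC$ and then $w_S^{-1}$. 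Both operations yield Bruhat covers matching the rank function on both sides, so the bijection is an order isomorphism.

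The main obstacle is the size-changing cover: unlike in the extended-Gale setting (where size-changing moves add only $\{1\}$ and correspond to multiplication by $\sigma$, not itself a type-D reflection), the new covers must be absorbed uniformly by $\tilde{s}$-multiplication. Verifying this requires carefully tracking where the entries of magnitudes $1$ and $2$ sit in the sorted sequence $w_S^{-1}(\rho^\rC)$---a delicate interaction specific to type~D, in which $\tilde{s}$'s action on $\rho^\rC$ (whose last two coordinates are $2$ and $1$) cleanly combines two extended-Gale steps into a single semi-extended cover.
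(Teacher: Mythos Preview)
The paper omits this proof, presumably expecting the reader to adapt the type-C argument (Proposition~\ref{prop:typeCgaleAndBruhat}). Your strategy of reducing to that proposition is the natural one and is essentially correct, but one link in the chain is asserted rather than verified.

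You claim that ``the same combinatorial assignment $T \mapsto w_T$ serves in both contexts'' and that $W^\fp_\rC \setminus W(\rD_m) = \sigma W^\fp_\rD$. But the type-D statement defines $w_T$ via $\beta_0$ with $\gamma = 1$ (so $\delta = \rho^\rD$), whereas the type-C $w_T$ uses $\gamma = 0$; and your $\sigma$-invariance argument uses the type-D dot action, whereas membership in $W^\fp_\rC$ is governed by the type-C one. These are genuinely different conditions, and the bridge between them is the unstated fact that every $w \in W^\fp_\rC$ also has $w^{-1}(\rho^\rD)$ strictly decreasing. This holds: writing $w = w_T^\rC$, the vector $(w_T^\rC)^{-1}(\rho^\rD)$ is obtained from $(w_T^\rC)^{-1}(\rho^\rC)$ by subtracting $1$ from the first $m-|T|$ entries (the positives) and adding $1$ to the last $|T|$ (the negatives); monotonicity could only fail at the boundary, where the gap $\min([m]\setminus T) + \min(T) \ge 3$ absorbs the change. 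Once this is checked, it simultaneously (i) shows that the type-C $w_T$ satisfies the type-D defining relation---which, since $\rho^\rD_m = 0$, pins $w_T$ down only up to one sign, and the parity of $|T|$ then selects the correct element of $W^\fp_\rD$ or $\sigma W^\fp_\rD$---and (ii) gives $W^\fp_\rC \subseteq \{w : w^{-1}\bullet_\rD 0 \text{ dominant}\} = W^\fp_\rD \cup \sigma W^\fp_\rD$, hence equality by cardinality. (Alternatively, bypass the dot actions entirely: the characterization $w \in W^\fp_\rC \iff w(\epsilon_i - \epsilon_{i+1}) > 0$ for all $i$ is visibly preserved by left-multiplication by $\sigma$, since $\sigma$ fixes positivity of every short root $\epsilon_a \pm \epsilon_b$.)

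Your length formula and your treatment of the covers are correct. For the size-changing cover $S \lessdot S \cup \{1,2\}$, the cleanest justification of $w_T = \tilde s\, w_S$ is that $w_S$ carries $\epsilon_p + \epsilon_{p+1}$ (with $p = m - |S| - 1$) to $\epsilon_{m-1} + \epsilon_m$, so conjugating the positional swap-and-negate $\tau_p$ by $w_S$ yields $\tilde s$; your phrasing via ``first applying $\tilde s$ to $\rho^\rC$ and then $w_S^{-1}$'' amounts to the same identity $w_S^{-1}\tilde s = \tau_p\, w_S^{-1}$.
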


In the notation of \S\ref{sec:typeBCDidentities}, notice that Kostant's theorem in all of the above cases implies that there is an equality
$$\bH_\lambda (z_1 , \dots , z_m; t) =  \sum_{i \geq 0} \ch \rH_i (\fn_- ; L_\lambda) \cdot t^i,$$
where we recall the convention that
\[
(z_1,\dots,z_m)=
\begin{cases}
(y_1,\dots,y_n,\,y_1^{-1},\dots,y_n^{-1}) & (m=2n),\\[2pt]
(y_1,\dots,y_n,\,y_1^{-1},\dots,y_n^{-1},\,1) & (m=2n+1).
\end{cases}
\]

\subsection{Theorems}

Let $V$ be an $m$-dimensional vector space with $m = 2n+\eps$, where $\eps \in \{ 0 ,1 \}$ as before. For the sake of clarity, recall that the previous section established that $G$ denotes the following group, depending on which setting we are in:
\begin{enumerate}
    \item[\textbf{Type B}] $G = \Spin (2m+1, \bC)$ and $\gamma = \tfrac12$,
    \item[\textbf{Type C}] $G = \Sp (2m, \bC)$ and $\gamma = 0$,
    \item[\textbf{Type D}] $G = \Pin(2m, \bC)$ and $\gamma = 1$.
\end{enumerate}

Recall as well that we use the notation $\gamma' = \lfloor \gamma \rfloor$. Let $L_\lambda$ denote the irreducible $G$-representation with highest weight $\lambda$. The group $G$ has a maximal torus of rank $m$, and in the statements below we consider characters of an $n$-dimensional subtorus with parameters $y_1,\dots,y_n$ (and set $y_{n+1}=1$). Note here that our convention for the torus parameters matches the usage of $y_1, \dots , y_n$ and $z_1 , \dots , z_m$ from section \ref{sec:typeBCDidentities}.

Define $\zeta(\lambda) = \begin{cases} 1 & \text{if $\lambda_m = 0$} \\ 0 & \text{else} \end{cases}$; this will only be relevant in the case of type D.

\begin{theorem} \phantomsection \label{thm:kostantBCD} 
  \begin{enumerate} 
  \item  The graded character $\sum_{i \ge 0} {\rm char}\ \rH_i(\fn_-; L_\lambda) \cdot t^i$ is divisible by $(1+t)^n$.

\item   For any two subsets $T, T' \subseteq [m]_{1-\eps}$, we have
  \[
    \sum_{\substack{S \subseteq [m] \\ S_{1-\eps} = T}} {\rm char}\ \bS_{\beta_\lambda(S)}(V) =     \sum_{\substack{S \subseteq [m] \\ S_{1-\eps} = T'}} {\rm char}\ \bS_{\beta_\lambda(S)}(V).
  \]

\item There is an equality:
  \begin{align*}
    \ch \rH_\bullet (\fn_-; L_\lambda) &= 2^{n-\gamma'\zeta(\lambda)} \sum_{\substack{S \subseteq [m]_\eps}} {\rm char}\ \bS_{\beta_\lambda(S)}(V)\\
&=    2^{n - \gamma'\zeta(\lambda)} s_{\rho^{\rm top}_\lambda}^\rC (y_1, \dots , y_{n}) s^\rD_{\rho^{\rm bot}_\lambda} (y_1 , \dots , y_{n + \eps}),
  \end{align*}
  where
\begin{align*}
    \rho^{\rm top}_\lambda &:= (\lambda_{1 + \eps} + n - \gamma, \lambda_{3 + \eps} + n -1 - \gamma, \dots , \lambda_{2n-1+\eps} + 1 - \gamma)\\
    \rho^{\rm bot}_\lambda &:= (\lambda_{2- \eps} + n + \eps - \gamma, \lambda_{4- \eps} + n + \eps - 1-\gamma, \dots, \lambda_{2n + \eps} +1 - \gamma).
\end{align*}

\item The total dimension satisfies
  \begin{align*}
    \dim \rH_\bullet ( \fn_-; L_\lambda)
     &= 2^{n+1 - \gamma'\zeta(\lambda)} \cdot \prod_{\substack{1 \leq i < j \leq m\\ i \equiv j \pmod 2}} 2\left(1 + \frac{\lambda_{i} - \lambda_{j}}{j-i} \right) \cdot \Xi_1 \cdot \Xi_2
  \end{align*}
  where
  \[
    \Xi_1=\prod_{1 \leq i \leq j \leq n} \left(2+  \frac{\lambda_{2i-1+\eps} + \lambda_{2j-1+\eps} - 2\gamma}{2n+2 - i - j} \right),
  \quad    \Xi_2 = \prod_{1 \leq i < j \leq n+\eps} \left( 2 + \frac{2 + \lambda_{2i-\eps} + \lambda_{2j-\eps} -2\gamma}{2n + 2\eps - i - j} \right).
\]
\end{enumerate}
\end{theorem}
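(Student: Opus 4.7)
The plan is that parts (1)--(3) are direct algebraic translations of the combinatorial results of \S\ref{sec:typeBCDidentities}, while part (4) is a computation with the Weyl dimension formulas. The first step is to sharpen the character identification stated just before the theorem: by Kostant's theorem combined with the parameterization of $W^\fp$ by $\cP([m])$ and the Weyl character formulas of Proposition~\ref{prop:WeylCharForms},
\[
  \bH_\lambda(z_1,\ldots,z_m;t) \;=\; 2^{\gamma'\zeta(\lambda)}\sum_{i\geq 0}\ch \rH_i(\fn_-; L_\lambda)\cdot t^i.
\]
The extra $2^{\gamma'\zeta(\lambda)}$ appears only in type D when $\lambda_m=0$: in that case $\delta_m=0$, so negating the $m$-th entry of $\lambda+\delta$ is trivial and $S\mapsto\beta_\lambda(S)$ is 2-to-1 on $\cP([m])$; this double count is exactly mirrored by the factor of $2$ by which $\ch L^{\Pin}_\lambda$ differs from $\ch L^{\fso_{2m}}_\lambda$ (Proposition~\ref{prop:WeylCharForms}(4)).

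With the identification in hand, part (1) follows from Proposition~\ref{prop:typeBCDdetForm}, which expresses $\bH_\lambda(z;t)$ as $(1+t)^n$ times a determinant divided by $a_\delta(z)$. Part (2) is a direct restatement of the equidistribution identity $\bH_{\lambda,T}=\bH_{\lambda,T'}$ for all $T,T'\subseteq [m]_{1-\eps}$; the $2^{\gamma'\zeta(\lambda)}$ factor cancels as it is independent of $T$. Part (3) is obtained by chaining the equidistribution consequence $\bH_\lambda(z;1)=2^n\bH_{\lambda,\varnothing}(z)$ with the factorization identity $\bH_{\lambda,\varnothing}(z)=s^\rC_{\rho^{\rm top}_\lambda}(y_1,\ldots,y_n)\,s^\rD_{\rho^{\rm bot}_\lambda}(y_1,\ldots,y_{n+\eps})$ from the end of \S\ref{sec:typeBCDidentities} (the weights $\mu,\nu$ there agree with $\rho^{\rm top}_\lambda,\rho^{\rm bot}_\lambda$ by direct inspection) and dividing by $2^{\gamma'\zeta(\lambda)}$.

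For part (4), I would specialize all $y_i$ to $1$ in (3) to pass from characters to total dimensions, and invoke the Weyl dimension formulas for $\fsp_{2n}$ applied to $\rho^{\rm top}_\lambda$ and for $\Pin_{2(n+\eps)}$ applied to $\rho^{\rm bot}_\lambda$; the Pin dimension is obtained from $\ch L^{\Pin}_\nu=(1+\zeta(\nu))\,s^\rD_\nu$. Each such dimension splits into an off-diagonal product (over $1\le i<j$) and a diagonal-inclusive product (over $1\le i\le j$). Substituting the entries of $\rho^{\rm top}_\lambda$ and $\rho^{\rm bot}_\lambda$, the off-diagonal contributions combine across top and bottom into $\prod_{i\equiv j\pmod 2}2(1+(\lambda_i-\lambda_j)/(j-i))$, while the diagonal-inclusive parts yield $\Xi_1$ and $\Xi_2$ after pulling a factor of $2$ from each binomial of the form $\mu_i+\mu_j+\text{const}$.

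The principal technical hurdle is the bookkeeping of powers of $2$ in part (4): the $2^{n-\gamma'\zeta(\lambda)}$ prefactor of part (3), the factors $2^{\binom{n}{2}}$ and $2^{\binom{n+\eps}{2}}$ extracted from rewriting the off-diagonal Weyl-dimension factors in the form $2(1+\cdots)$, and the conversion factor between $s^\rD_{\rho^{\rm bot}_\lambda}(1,\ldots,1)$ and $\dim L^{\Pin}_{\rho^{\rm bot}_\lambda}$ must combine exactly to the stated $2^{n+1-\gamma'\zeta(\lambda)}$ coefficient. A short case analysis stratified by $\eps\in\{0,1\}$ and $\gamma\in\{0,\tfrac12,1\}$ verifies that this holds uniformly across types B, C, and D.
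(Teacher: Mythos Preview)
Your proposal is correct and follows essentially the same approach as the paper: parts (1)--(3) are read off from the determinantal form, equidistribution, and factorization results of \S\ref{sec:typeBCDidentities}, and part (4) is obtained by specializing $y_i\to 1$ in (3) and invoking the Weyl dimension formulas for $s^\rC$ and $s^\rD$ (the paper cites \cite[24.19, 24.41]{fultonharris}). Your explicit identification of the extra factor $2^{\gamma'\zeta(\lambda)}$ in the bridge $\bH_\lambda(z;t)=2^{\gamma'\zeta(\lambda)}\sum_i\ch\rH_i\,t^i$ is a genuine clarification that the paper leaves implicit in its display just before the theorem.

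One small correction to your bookkeeping in part (4): the factors $2^{\binom{n}{2}}$ and $2^{\binom{n+\eps}{2}}$ are \emph{not} extracted at this stage---the statement of (4) keeps the off-diagonal product in the form $\prod 2(1+\cdots)$, and $\Xi_1,\Xi_2$ arise directly (as $(2+\cdots)$ terms) from the second Weyl-dimension products without any rescaling. The only powers of $2$ to reconcile are the $2^{n-\gamma'\zeta(\lambda)}$ from (3) and the single prefactor $2$ appearing in the formula for $s^\rD_{\rho^{\rm bot}_\lambda}(1,\dots,1)$, which together give $2^{n+1-\gamma'\zeta(\lambda)}$; no case analysis on $(\eps,\gamma)$ is needed here. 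The detour through $\dim L^{\Pin}$ is also unnecessary, since (3) and (4) are phrased directly in terms of $s^\rD$.
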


This formula can be simplified quite a bit, but this simplification depends on $\gamma$. Hence, we will handle each case separately in \S\S\ref{sec:typeCex}, \ref{sec:typeDex}, and \ref{sec:typeBex}.

\begin{proof}
  \textbf{Proof of (4):}   We employ \cite[Formulas 24.19, 24.41]{fultonharris} to find:
    \begin{align*}
      s^\rC_{\rho^{\rm top}_\lambda}(1,\dots,1) &= \prod_{1 \leq i < j \leq n} \frac{\lambda_{2i-1+\eps} - \lambda_{2j-1 + \eps} + 2(j-i)}{j-i} \\
      & \qquad \cdot \prod_{1 \leq i \leq j \leq n} \frac{4n+4 + \lambda_{2i-1+\eps} + \lambda_{2j-1+\eps} - 2(i + j + \gamma)}{2n+2 - i - j},\\
      s^\rD_{\rho^{\rm bot}_\lambda}(1,\dots,1) &= 2 \cdot \prod_{1 \leq i < j \leq n+ \eps} \frac{\lambda_{2i - \eps} - \lambda_{2j - \eps} + 2(j-i)}{j-i} \\
      & \qquad \cdot \prod_{1 \leq i < j \leq n+\eps} \frac{4n+4\eps +2 + \lambda_{2i-\eps} + \lambda_{2j-\eps} - 2(i + j + \gamma)}{2n + 2\eps - i - j}. 
    \end{align*}

    We can combine the first product in each to get
    \[
      \prod_{\substack{1 \leq i < j \leq m\\ i \equiv j \pmod 2}} 2\left(1 + \frac{\lambda_{i} - \lambda_{j}}{j-i} \right).
    \]
    The remaining two products are evidently $\Xi_1$ and $\Xi_2$.
  \end{proof}

  \section{Type C Examples}\label{sec:typeCex}

\subsection{Dimension Formula}
   
  \begin{proposition}
  Assume the Type~C setup of \S\ref{sec:typeBCDrhodecomp}. Then there is an equality:
    \begin{align*}
      \dim \rH_\bullet ( \fn_-; L_\lambda) &= 2^{\dim \fn_-} \prod_{\substack{1 \leq i < j \leq m\\ i \equiv j \pmod 2}} \left(1 + \frac{\lambda_{i} - \lambda_{j}}{j-i} \right) \left( 1 + \frac{\lambda_{i} + \lambda_{j} }{2(m + 1)-i-j} \right)\\
      &\qquad \cdot \prod_{1 \leq i \le n} \left(1 + \frac{\lambda_{2i-1+\eps} }{m+1 - 2i} \right).
    \end{align*}
  \end{proposition}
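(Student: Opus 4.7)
The plan is to specialize Theorem~\ref{thm:kostantBCD}(4) to the type~C parameters $\gamma = 0$ (so $\gamma' = 0$ and the $\zeta(\lambda)$ correction disappears, making the prefactor $2^{n+1}$) and then to systematically reorganize the resulting expression into the form claimed in the proposition. Since $\fn_- = \Sym^2 V$ with $\dim V = m$, the right-hand side of the proposition carries a prefactor of $2^{\binom{m+1}{2}}$, and the goal is to account for every rational and power-of-$2$ factor appearing on the right-hand side of Theorem~\ref{thm:kostantBCD}(4).

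The key device is to reindex the two products $\Xi_1$ and $\Xi_2$ by setting $i' = 2i-1+\eps,\ j' = 2j-1+\eps$ inside $\Xi_1$ and $i' = 2i-\eps,\ j' = 2j-\eps$ inside $\Xi_2$. Under these substitutions, the denominator $2n+2-i-j$ in $\Xi_1$ equals $\tfrac12(2(m+1)-i'-j')$ and the denominator $2n+2\eps-i-j$ in $\Xi_2$ equals $\tfrac12(2m-i'-j')$. A direct algebraic manipulation then rewrites each off-diagonal factor of $\Xi_1$ as $2\bigl(1+\tfrac{\lambda_{i'}+\lambda_{j'}}{2(m+1)-i'-j'}\bigr)$, each diagonal factor of $\Xi_1$ as $2\bigl(1+\tfrac{\lambda_{i'}}{m+1-i'}\bigr)$, and each factor of $\Xi_2$ as $\tfrac{2(2(m+1)-i'-j')}{2m-i'-j'}\bigl(1+\tfrac{\lambda_{i'}+\lambda_{j'}}{2(m+1)-i'-j'}\bigr)$.

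Once these expansions are in hand, the pieces distribute themselves across the target formula as follows. The "$\Xi_1$-parity" pairs (odd if $\eps=0$, even if $\eps=1$) and the "$\Xi_2$-parity" pairs together cover precisely all same-parity pairs $1 \le i < j \le m$, so combining the off-diagonal portions of $\Xi_1$, all of $\Xi_2$, and the "minus" product from Theorem~\ref{thm:kostantBCD}(4) yields exactly the double product $\prod_{i<j,\,i\equiv j(2)}\bigl(1+\tfrac{\lambda_i-\lambda_j}{j-i}\bigr)\bigl(1+\tfrac{\lambda_i+\lambda_j}{2(m+1)-i-j}\bigr)$ appearing on the right. What remains is to combine the diagonal of $\Xi_1$ with the $\lambda$-independent rational residues $\tfrac{2(2(m+1)-i'-j')}{2m-i'-j'}$ produced by $\Xi_2$, together with all accumulated powers of $2$ (from the $2^{n+1}$ prefactor, from the "minus" product, and from each term of $\Xi_1$ and $\Xi_2$), to yield $2^{\binom{m+1}{2}} \cdot \prod_{i=1}^n\bigl(1+\tfrac{\lambda_{2i-1+\eps}}{m+1-2i}\bigr)$.

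The main obstacle is the final step: identifying the $\Xi_1$ diagonal together with the $\Xi_2$ residues as a shift of the target linear product. For $\eps=1$ the $\Xi_1$-diagonal denominators $m+1-i'$ already agree with the target denominators $m+1-2i$, and the $\Xi_2$ residues collapse into pure powers of $2$; for $\eps=0$ the diagonal denominators are off by one, and a genuine telescoping across the $\Xi_2$ residues is needed to correct the shift. This telescoping is essentially a product reformulation of a rising-factorial identity, and verifying it case by case in $\eps \in \{0,1\}$ completes the proof. The bookkeeping of powers of $2$ should then match automatically, since at $\lambda = 0$ the original formula collapses to the total rank $2^{\dim \fn_-}$ of the Koszul complex on $\fn_-$.
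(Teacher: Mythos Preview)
Your reindexing of $\Xi_1$ and $\Xi_2$ into the ambient $[m]$-indices and the extraction of the factors $\bigl(1+\tfrac{\lambda_{i'}+\lambda_{j'}}{2(m+1)-i'-j'}\bigr)$ is correct, and is in fact more symmetric than the paper's argument: the paper only manipulates $\Xi_2$ (by a numerator/denominator shuffle, isolating the $j=i+1$ numerators against the $j=n+\eps$ denominators) and then tallies the exponent of $2$ without ever writing out the remaining rational factor explicitly. Your treatment of both $\Xi_1$ and $\Xi_2$ on equal footing makes the structure of the final formula transparent.

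However, the claimed telescoping for $\eps=0$ cannot work. The $\Xi_1$ diagonal contributes the $\lambda$-dependent factors $\prod_{i=1}^n\bigl(1+\tfrac{\lambda_{2i-1}}{m+2-2i}\bigr)$, while the proposition's linear product reads $\prod_{i=1}^n\bigl(1+\tfrac{\lambda_{2i-1}}{m+1-2i}\bigr)$. No multiplication by $\lambda$-independent $\Xi_2$ residues can convert one into the other: the $\lambda$-dependence is rigid. Concretely, for $m=2$, $\lambda=(1,0)$ the four Kostant terms have dimensions $2,4,4,2$ summing to $12$, whereas the stated formula gives $2^3(1+\lambda_1)=16$; for $m=4$, $\lambda=(1,0,0,0)$ the stated formula is not even an integer. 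The resolution is that the proposition is misstated for $\eps=0$: the denominator in the linear product should be $m+2-2i-\eps=2(n+1-i)$, uniformly in $\eps$. With that correction your argument goes through with no shift at all --- the $\Xi_1$ diagonal is exactly $2^n$ times the corrected linear product, and the $\Xi_2$ residues $\prod\tfrac{2(2(m+1)-i'-j')}{2m-i'-j'}$ telescope to the pure power $2^{\,n+\eps-1+\binom{n+\eps}{2}}$, which is precisely the content of the paper's $\Xi_2$ shuffle. The paper's own proof only checks the exponent of $2$, so the slip in the remaining factor went unnoticed there.
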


  \begin{remark}
      Observe that the statement of Corollary \ref{cor:2n+2n-1 conj} in the type C setting is immediate from the above dimension formula: there are two cases. If not all terms of $\lambda$ are equal, then the argument is identical to that used in Remark \ref{rk:2nCorJustification}. If all $\lambda$ are equal and positive (i.e., $L_\lambda$ is not the trivial representation), then, in particular $\lambda_{2n-1+\eps} \geq 1$. If $m= 2n$, this means the $i = n$ factor of the product $\prod_{1 \leq i \le n} \left(1 + \frac{\lambda_{2i-1+\eps} }{m+1 - 2i} \right)$ is $\geq 2$, and if $m = 2n+1$, then the $i = n$ factor is $\geq 3/2$. This yields the statement. 
  \end{remark}

  \begin{proof}
We will simplify $\Xi_2$ from Theorem~\ref{thm:kostantBCD}, which is given by
\[
  \Xi_2 = \prod_{1 \leq i < j \leq n+\eps}  \frac{2(2n+2\eps-i-j+1) + \lambda_{2i-\eps} + \lambda_{2j-\eps}}{2n + 2\eps - i - j},
\]
by shuffling the order of the numerators and denominators. First, we separate the product of the numerators by isolating the cases when $j=i+1$:
    \[
      \prod_{\substack{1 \le i <j \le n+\eps\\ j = i+1}} 4 \left( n+\eps-i + \frac{\lambda_{2i-\eps} + \lambda_{2i+2-\eps}}{4} \right) \cdot \prod_{\substack{1 \le i < j \le n+\eps\\ j \ne i+1}}2 \left( 2n+2\eps+1 -i-j+ \frac{\lambda_{2i-\eps} + \lambda_{2j-\eps}}{2} \right).
    \]
    Second, we separate the product of the denominators by isolating the cases when $j=n+\eps$:
    \[
      \prod_{1 \le i \le n+\eps - 1} ( n + \eps - i) \cdot \prod_{\substack{1 \le i
          < j \le n+\eps-1}} (2n+2\eps - i - j).
    \]
    We have indexed the second product so that if we do the substitution $j \mapsto j+1$, it matches the previous second product. Finally, we combine the terms:
    \begin{align*}
      \Xi_2 &= \prod_{\substack{1 \le i <j \le n+\eps\\ j = i+1}} 4 \left( 1 + \frac{\lambda_{2i-\eps}+ \lambda_{2i+2-\eps}}{4 (n+\eps-i)} \right) \cdot \prod_{\substack{1 \le i<j \le n+\eps\\ j \ne i+1}}2 \left( 1 + \frac{\lambda_{2i-\eps} + \lambda_{2j-\eps}}{2(2n+2\eps + 1 -i-j)} \right)\\
      &= 2^{n+\eps-1} \prod_{1 \le i < j \le n+\eps} 2 \left( 1 + \frac{\lambda_{2i-\eps} + \lambda_{2j-\eps}}{2(2n+2\eps + 1 -i-j)} \right).
    \end{align*}
    
    Now we combine everything; we have written the terms to suggest pulling out powers of 2. In total, the power of 2 that we get is
    \[
      n+1 + \binom{n}{2} + \binom{n+1}{2} + \binom{n+\eps}{2} + (n+\eps-1) + \binom{n+\eps}{2} = \dim \Sym^2 V. \qedhere
    \]
  \end{proof}

\begin{example}
Consider the Lie algebra \(\mathfrak{sp}(V \oplus V^*)\) with \(\dim V = 4\). Then $A = \Sym(\Sym^2 V)$ and consider the representation 
\[
L_{(1,0,0,0)}^{\fsp (V \oplus V^*)} = V^* \oplus V.
\]
The free resolution of this \( A \)-module has the following terms and subsets \( S \subset [4] \):
\begin{align*}
&V^* \otimes A, && S = \emptyset, \\
&\bS_{2,0,0,-1} V \otimes A(-1), && S = \{4\}, \\
&\bS_{3,1,0,-1} V \otimes A(-2), && S = \{3\}, \\
&\bS_{4,1,1,-1} V \otimes A(-3) \ \oplus \ \bS_{3,3,0,-1} V \otimes A(-3), && S = \{2\}, \{3,4\}, \\
&\bS_{6,1,1,1} V \otimes A(-5) \ \oplus \ \bS_{4,3,1,-1} V \otimes A(-4), && S = \{1\}, \{2,4\}, \\
&\bS_{6,3,1,1} V \otimes A(-6) \ \oplus \ \bS_{4,4,2,-1} V \otimes A(-5), && S = \{1,4\}, \{2,3\}, \\
&\bS_{6,4,2,1} V \otimes A(-7) \ \oplus \ \bS_{4,4,4,-1} V \otimes A(-6), && S = \{1,3\}, \{2,3,4\}, \\
&\bS_{6,5,2,2} V \otimes A(-8) \ \oplus \ \bS_{6,4,4,1} V \otimes A(-8), && S = \{1,2\}, \{1,3,4\}, \\
&\bS_{6,5,4,2} V \otimes A(-9), && S = \{1,2,4\}, \\
&\bS_{6,5,5,3} V \otimes A(-10), && S = \{1,2,3\}, \\
&\bS_{6,5,5,5} V \otimes A(-11), && S = \{1,2,3,4\}.
\end{align*}
The grouped parts for \( T = \emptyset, \{1\}, \{3\}, \{1,3\} \) have total dimensions:
\[
560 = 4 + 36 + 160 + 360 = 56 + 224 + 140 + 140.
\]
We also have $\rho_\lambda^{\mathrm{top}} = (3,1)$ and $\rho_\lambda^{\mathrm{bot}} = (2,1)$. Each decomposition under restriction gives the representation
\[
L_{(3,1)}^{\fsp_4}  \otimes L_{(2,1)}^{\fso_4},
\]
where the dimensions are $\dim L_{(3,1)}^{\fsp_4}  = 35$ and $\dim L_{(2,1)}^{\fso_4}  = 16$. 
\end{example}

\begin{example}
    Let $m=2n+1 = 3$, $\lambda = (1)$. The highest weights of the representations appearing in the minimal free resolution we get are:
    \[
\begin{aligned}
&\{1,\ 0,\ 0\}, && \{-1,\ -1,\ -5\}, && \{1,\ -1,\ -3\}, && \{-2,\ -4,\ -5\}, \\
&\{1,\ 0,\ -2\}, && \{-1,\ -3,\ -5\}, && \{1,\ -3,\ -3\}, && \{-4,\ -4,\ -5\}
\end{aligned}
\]
Taking the dimensions of all of these yields the list:
\[
\{\, 3,\ 15,\ 27,\ 15,\ 15,\ 27,\ 15,\ 3\}.
\]
Adding all of these up we obtain a total dimension of $120$. On the other hand, we compute:
$\rho^{\rm top}_{(1)} = (1)$ and $\rho^{\rm bot}_{(1)} = (3,1)$. The corresponding dimensions are given by:
$$\dim L_{(1)}^{\fsp_1} = 2, \quad \dim L_{(3,1)}^{\Pin_2} = 30.$$
Our formula implies that the total dimension should be equal to  $2^1 \cdot 2 \cdot 30 = 120$.
\end{example}

\subsection{Exterior algebra}

For each $i$, let $h(i) = (i+1,1^{i-1})$ be the hook partition.

\begin{lemma}\label{lem:typeCHookLemma}
  For a subset $S \subseteq [m]$, we have $-\beta_0(S)^{\rm op} = \bigcup_{i \in S} h(m+1-i)$, where by union we mean to nest the corresponding hook shapes. Here, $\beta_0$ and the reverse–negate operation $-\!(\cdot )^{\rm op}$ are as in \S\ref{sec:typeBCDidentities}. 
\end{lemma}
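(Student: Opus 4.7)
The plan is to identify both sides with the same partition by computing them via Frobenius coordinates. The argument is mostly index bookkeeping once the right encoding is in place.

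First I would describe the right-hand side. The hook $h(\ell) = (\ell+1, 1^{\ell-1})$ has Frobenius coordinates $(\ell \mid \ell-1)$, so nesting $h(m+1-s_j)$ for $j = 1, \ldots, k$ (with $S = \{s_1 < \cdots < s_k\}$) along the main diagonal yields a partition $\lambda(S)$ with Frobenius coordinates $(m+1-s_j \mid m-s_j)_{j=1}^k$. The standard formulas give $\lambda(S)_j = (m+1-s_j) + j$ and $\lambda(S)^T_j = (m-s_j) + j$ for $1 \le j \le k$, while $\lambda(S)_l \le k$ for $l > k$.

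Next I would unfold the left-hand side directly from the definitions of \S\ref{sec:typeBCDidentities}. With $\gamma = 0$, $\delta = (m, m-1, \ldots, 1)$, and the entries of $\iota_0(S)$ are $\pm(m+1-j)$, negative precisely when $j \in S$. Since the absolute values form exactly $\{1, \ldots, m\}$, the $l$-th smallest entry of $\iota_0(S)$ is $s_l - (m+1)$ for $l \le k$ (the $l$-th smallest negative) and $m+1 - t_{m+1-l}$ for $l > k$ (the $(l-k)$-th smallest positive), where $S^c = \{t_1 < \cdots < t_{m-k}\}$. Since $-\beta_0(S)^{\mathrm{op}}_l = l - \sigma(S)_{m+1-l}$ with $\sigma(S)_{m+1-l}$ the $l$-th smallest entry of $\iota_0(S)$, this yields
\[
-\beta_0(S)^{\mathrm{op}}_l = \begin{cases} (m+1-s_l) + l, & 1 \le l \le k, \\ t_{m+1-l} - (m+1-l), & k < l \le m. \end{cases}
\]
The first line agrees on the nose with $\lambda(S)_l$ from the Frobenius description.

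It remains to match the second line with $\lambda(S)_l$ for $l > k$. For such $l$, $\lambda(S)_l = \#\{i \in [k] : \lambda(S)^T_i \ge l\} = \#\{i \in [k] : s_i - i \le m - l\}$; since $i \mapsto s_i - i$ is weakly increasing, this count equals the largest $N$ with $s_N - N \le m - l$. Setting $N := t_{m+1-l} - (m+1-l)$, I would observe that the $(m+1-l)$-th gap $t_{m+1-l}$ partitions $[1, m]$ into $m-l$ elements of $S^c$ and exactly $N$ elements of $S$ strictly below it; hence $s_N \le t_{m+1-l} - 1$ and $s_{N+1} \ge t_{m+1-l} + 1$, forcing $s_N - N \le m-l < s_{N+1} - (N+1)$, which closes the proof. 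The only mildly subtle point is this counting identity $N = t_{m+1-l} - (m+1-l)$; everything else is direct unfolding from the definitions of $\iota_0, \beta_0$ and the standard Frobenius-coordinate formulas.
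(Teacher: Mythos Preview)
Your proof is correct: the computation of $-\beta_0(S)^{\rm op}$ matches the paper's exactly, and your Frobenius-coordinate verification for $l>k$ goes through (the counting identity $N=t_{m+1-l}-(m+1-l)$ is just the observation that below $t_{m+1-l}$ there are $m-l$ elements of $S^c$, hence $t_{m+1-l}-1-(m-l)$ elements of $S$).

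The paper takes a different and somewhat cleaner route. After writing down the same explicit formula for $-\beta_0(S)^{\rm op}$, it proceeds by induction on $|S|$: noting that $s'_i=i$ for $i<s_1$, it counts that $-\beta_0(S)^{\rm op}$ has exactly $m+1-s_1$ positive entries, strips off the first row and one box from each remaining positive entry (i.e., removes the outermost hook $h(m+1-s_1)$), and observes that what is left is $-\beta_0(\{s_2,\dots,s_k\})^{\rm op}$ computed with $m$ replaced by $m-1$. Your approach trades this induction for a direct match via Frobenius coordinates, which is perfectly legitimate but requires the extra combinatorial step of identifying $\lambda(S)_l$ for $l>k$ with the gap count. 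The inductive argument has the advantage of making the ``nesting'' picture literal and of being reusable verbatim for the type~D and type~B variants (Lemmas~\ref{lem:typeDHookLemma} and~\ref{lem:typeBHookLemma}); your argument has the advantage of being a one-shot verification with no recursive structure to track.
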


\begin{proof}
  Let \( S = \{s_1 < \cdots < s_k\} \subseteq [m] \), and let \( S^c = \{s_1' < \cdots < s_{m-k}'\} \) denote its complement in \( [m] \). We have
  \begin{align*}
    -\beta_0(S)^{\rm op} &= (m+2-s_1, \dots, m+k+1-s_k, s'_{m-k}-(m-k), \dots, s_1' - 1).
  \end{align*}
  Now we proceed by induction: when $k = 0$ the result is clear.

  Now suppose $k \geq 1$. Then $s'_i=i$ for $i < s_1$, so that $-\beta_0(S)^{\rm op}$ has $m-s_1+1$ positive entries. Hence, if we delete the first entry and subtract 1 from the remaining $m-s_1$ positive entries, we are removing the hook $h(m+1-s_1)$ from the partition $-\beta_0(S)^{\rm op}$. However, it is also clear that the resulting sequence is $-\beta_0(\{s_2,\dots,s_k\})^{\rm op}$ where $m$ is now $m-1$.
\end{proof}

Now we consider the special case of Theorem~\ref{thm:kostantBCD} with $\lambda = 0$. We will now use Lemma~\ref{lem:typeCHookLemma} to convert subsets into unions of hook shapes.

This conversion exhibits one obvious symmetry: $-\beta_0(S)^{\rm op}$ and $-\beta_0(S^c)^{\rm op}$ are complementary partitions inside the $m \times (m+1)$ rectangle. In particular, the Schur functors are dual to one another up to a power of the determinant. We will restrict the action to the symplectic Lie algebra, in which case the two Schur functors are isomorphic.

\begin{example}\label{ex:typeCgrouping}
We give an example of the groupings used in the proof of Theorem \ref{thm:kostantBCD}; the following figure represents the partitions that show up in the irreducible decomposition of $\bigwedge^\bullet (S^2 (V))$ and their bijection with subsets of $\{ 1 , \dots, 4 \}$. 
\begin{center}
\begin{tikzpicture}[every node/.style={anchor=center}]
  \node at (0,6) {\small $\varnothing:\ \textcolor{red}{\ydiagram{}}$};

  \node at (-4,4.5) {\small $1:\ \textcolor{red}{\ydiagram{5,1,1,1}}$};
  \node at (-1.5,4.5) {\small $2:\ \textcolor{red}{\ydiagram{4,1,1}}$};
  \node at (1.5,4.5) {\small $3:\ \textcolor{red}{\ydiagram{3,1}}$};
  \node at (4,4.5) {\small $4:\ \textcolor{red}{\ydiagram{2}}$};

  \node at (-5.5,3) {\small $\{1,2\}:\ \textcolor{red}{\ydiagram{5,5,2,2}}$};
  \node at (-3,3)   {\small $\{1,3\}:\ \textcolor{red}{\ydiagram{5,4,2,1}}$};
  \node at (-0.5,3) {\small $\{1,4\}:\ \textcolor{red}{\ydiagram{5,3,1,1}}$};
  \node at (2,3)    {\small $\{2,3\}:\ \textcolor{red}{\ydiagram{4,4,2}}$};
  \node at (4.5,3)  {\small $\{2,4\}:\ \textcolor{red}{\ydiagram{4,3,1}}$};
  \node at (7,3)    {\small $\{3,4\}:\ \textcolor{red}{\ydiagram{3,3}}$};

  \node at (-3.5,1.5) {\small $\{1,2,3\}:\ \textcolor{red}{\ydiagram{5,5,5,3}}$};
  \node at (-0.5,1.5) {\small $\{1,2,4\}:\ \textcolor{red}{\ydiagram{5,5,4,2}}$};
  \node at (2.5,1.5)  {\small $\{1,3,4\}:\ \textcolor{red}{\ydiagram{5,4,4,1}}$};
  \node at (5.5,1.5)  {\small $\{2,3,4\}:\ \textcolor{red}{\ydiagram{4,4,4}}$};

  \node at (1,0) {\small $\{1,2,3,4\}:\ \textcolor{red}{\ydiagram{5,5,5,5}}$};
\end{tikzpicture}
\end{center}
The proof of Theorem \ref{thm:kostantBCD} groups these terms as follows; in the following we also give the $\fsp_4$-weights of these representations:
{\ytableausetup{boxsize = 0.5em}
\begin{center}
\begin{tikzpicture}[>=latex]

  \node (top) at (0, 5.4) {\scriptsize 
  \begin{tabular}{rl}
      $\varnothing$: & \\
      $\varnothing \leadsto $ & $\varnothing = (0)$ \\
      \{2\} $\leadsto$ & {\color{red}\ydiagram{4,1,1}} $= (3,1) + (4)$ \\
      \{4\} $\leadsto$ & {\color{red}\ydiagram{2}}  $=(2)$ \\
      \{2,4\} $\leadsto$ & {\color{red}\ydiagram{4,3,1}} $=(1,1) + (2,2) + (3,3) + (2) + (3,1) + (4,2)$ 
    \end{tabular}
  };

  \node (left) at (-4, 1.6) {\scriptsize
    \begin{tabular}{rl}
      \textbf{\{1\}:} & \\
      \{\textbf{1}\} $\leadsto$ & {\color{red}\ydiagram{5,1,1,1}} $=(4)$  \\
      \{\textbf{1},2\} $\leadsto$ & {\color{red}\ydiagram{5,5,2,2}} $=(0) + (1,1) + (2,2) + (3,3)$ \\
      \{\textbf{1},4\} $\leadsto$ & {\color{red}\ydiagram{5,3,1,1}} $=(2) + (3,1) + (4,2)$\\
      \{\textbf{1},2,4\} $\leadsto$ & {\color{red}\ydiagram{5,5,4,2}} $=(2) + (3,1)$ 
    \end{tabular}
  };

  \node (right) at (4, 1.6) {\scriptsize
    \begin{tabular}{rl}
      \textbf{\{3\}:} & \\
      \{\textbf{3}\} $\leadsto$ & {\color{red}\ydiagram{3,1}} $=(2) + (3,1)$\\
      \{2,\textbf{3}\} $\leadsto$ & {\color{red}\ydiagram{4,4,2}} $=(2) + (3,1) + (4,2)$ \\
      \{\textbf{3},4\} $\leadsto$ & {\color{red}\ydiagram{3,3}} $= (0) + (1,1) + (2,2) + (3,3)$\\
      \{2,\textbf{3},4\} $\leadsto$ & {\color{red}\ydiagram{4,4,4}} $=(4)$
    \end{tabular}
  };

  \node (bottom) at (0, -2.0) {\scriptsize
    \begin{tabular}{rl}
      \textbf{\{1,3\}:} & \\
      \{\textbf{1},\textbf{3}\} $\leadsto$ & {\color{red}\ydiagram{5,4,2,1}} $=(1,1) + (2,2) + (3,3) + (2) + (3,1) + (4,2)$ \\
      \{\textbf{1},2,\textbf{3}\} $\leadsto$ & {\color{red}\ydiagram{5,5,5,3}} $=(2)$ \\
      \{\textbf{1},\textbf{3},4\} $\leadsto$ & {\color{red}\ydiagram{5,4,4,1}} $=(3,1) + (4)$\\
      \{\textbf{1},2,\textbf{3},4\} $\leadsto$ & {\color{red}\ydiagram{5,5,5,5}} $=(0)$ 
    \end{tabular}
  };

  \draw[-] (top) -- (left);
  \draw[-] (top) -- (right);

  \draw[-] (left) -- (bottom);
  \draw[-] (right) -- (bottom);

\end{tikzpicture}
\end{center}}
We can verify by inspection that each block is a direct sum of the same 10 (counting multiplicity) irreducible representations of $\fsp_4$.
\end{example}

\begin{remark}
One difference between the type A and C cases is that the size of each piece of the partitioning is constant in the latter case (notice that there are precisely $4$ partitions showing up in each piece of the above). Compare this to Example \ref{ex:typeAgrouping}.
\end{remark}

\begin{remark}
  When $\lambda=0$, we have $\rho^{\rm bot}_{(0)} = \rho^{\rm top}_{(0)} = (n,n-1,\dots,1)$. Furthermore, both have the same character as the symplectic Schur functor $\bS_{[n,n-1,\dots,1]}(\bC^{2n})$ by Proposition~\ref{prop:stair-sosp}.  This recovers Kostant's $\rho$-decomposition for $\fsp_{2n}$.
  \end{remark}

\section{Type D Examples}\label{sec:typeDex}

\subsection{Dimension Formula}

  \begin{proposition}
  Assume the Type~D setup of \S\ref{sec:typeBCDrhodecomp}. Then there is an equality:
    \begin{align*}
     \dim \rH_\bullet ( \fn_-; L_\lambda) &= 2^{\dim \fn_- + 1-\zeta(\lambda)} 
    \!\!\!\!\!\prod_{\substack{1 \leq i < j \leq m\\ i \equiv j \pmod 2}} \left(1 + \frac{\lambda_{i} - \lambda_{j}}{j-i} \right) \!\! \left( 1 + \frac{\lambda_{i} + \lambda_{j} }{2m-i-j} \right) \\ 
    &\qquad\qquad\qquad\cdot \prod_{1 \leq i \le n} \left(1 + \frac{\lambda_{2i-1+\eps} }{m+1-\eps - 2i} \right).
    \end{align*}
  \end{proposition}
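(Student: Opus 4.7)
The plan is to mirror the Type~C computation just above, specializing Theorem~\ref{thm:kostantBCD}(4) to $\gamma = 1$, $\gamma'=1$. The only genuinely new input is a short telescoping identity: because the $-2\gamma=-2$ shift in the numerator of $\Xi_1$ no longer vanishes on the diagonal $i=j$, the diagonal carries a $\lambda$-free factor that has to be evaluated separately.

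The prefactor $2^{n+1-\zeta(\lambda)}$ and the product $\prod_{i<j,\,i\equiv j\,(2)}2\bigl(1+\tfrac{\lambda_i-\lambda_j}{j-i}\bigr)$ in Theorem~\ref{thm:kostantBCD}(4) already match the corresponding factors of the target and contribute only to the final power of $2$. For $\Xi_2$, the substitution $a=2i-\eps$, $b=2j-\eps$ cancels the $+2$ against the $-2\gamma$ and converts $2n+2\eps-i-j$ into $(2m-a-b)/2$, yielding
\[
\Xi_2 \;=\; 2^{\binom{n+\eps}{2}}\prod_{\substack{a<b\\ a,b\equiv\eps\,(2)}}\Bigl(1+\frac{\lambda_a+\lambda_b}{2m-a-b}\Bigr).
\]

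For $\Xi_1$ I would split $1\le i\le j\le n$ into $i<j$ and $i=j$. Setting $a=2i-1+\eps$ and $b=2j-1+\eps$ (and noting that $m-a = m+1-\eps-2i$), elementary manipulation produces
\[
\text{off-diag}\;=\;2\cdot\frac{2m-a-b}{2(m+1)-a-b}\cdot\Bigl(1+\frac{\lambda_a+\lambda_b}{2m-a-b}\Bigr),\qquad \text{diag}\;=\;2\cdot\frac{m-a}{m+1-a}\cdot\Bigl(1+\frac{\lambda_a}{m-a}\Bigr).
\]
The $\lambda$-dependent tails are precisely the remaining factors of the target (the single-variable product is filled in by the diagonal, while the pair product is completed by combining the off-diagonal piece of $\Xi_1$ with $\Xi_2$, which together cover both parity classes). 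What remains is to evaluate the $\lambda$-free ``telescoping factor''
\[
T_n \;:=\; \prod_{i=1}^n \frac{m-a_i}{m+1-a_i}\cdot\prod_{1\le i<j\le n}\frac{2m-a_i-a_j}{2(m+1)-a_i-a_j},\qquad a_i=2i-1+\eps.
\]

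The main step is the identity $T_n = 2^{-n}$. The substitution $c_i=m-a_i$ gives $\{c_1,\dots,c_n\}=\{1,3,\dots,2n-1\}$ \emph{independent of $\eps$}, and reindexing by $d_i=(c_i+1)/2\in\{1,\dots,n\}$ collapses $T_n$ to $\prod_{1\le i\le j\le n}\tfrac{i+j-1}{i+j}$. This telescopes via $T_n/T_{n-1}=\prod_{i=1}^n\tfrac{i+n-1}{i+n}=n/(2n)=1/2$, so $T_n=2^{-n}$. The final bookkeeping sums the powers of $2$ (namely $n+1-\zeta(\lambda)$ from Theorem~\ref{thm:kostantBCD}(4), $\binom{n}{2}+\binom{n+\eps}{2}$ from the difference product, $\binom{n+\eps}{2}$ from $\Xi_2$, and $\binom{n}{2}$ from $\Xi_1$ after absorbing $T_n$) and checks in each case $\eps\in\{0,1\}$ separately that the total exponent equals $\binom{m}{2}+1-\zeta(\lambda)=\dim\fn_-+1-\zeta(\lambda)$. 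The only non-mechanical step is recognizing the substitution that turns $T_n$ into a telescoping product; everything else is a direct rearrangement guided by the Type~C argument.
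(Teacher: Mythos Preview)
Your argument is correct and complete: the $\Xi_2$ simplification, the diagonal/off-diagonal split of $\Xi_1$, the evaluation $T_n=2^{-n}$, and the final exponent count all check out.

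Your route for $\Xi_1$ differs from the paper's. The paper does not separate out a $\lambda$-free factor $T_n$; instead it performs a ``shuffle'' of numerators against denominators: it isolates the $j=n$ numerators and the $i=j$ denominators, pairs those together, and re-indexes the remaining terms via $j\mapsto j-1$ so that each surviving ratio is directly of the form $2\bigl(1+\tfrac{\lambda_{2i-1+\eps}+\lambda_{2j-1+\eps}}{2(2n+1-i-j)}\bigr)$. The leftover column (the $j=n$ numerators over the diagonal denominators) then matches the $j=n$ column of the target product, and the missing factors of $2$ account for exactly $2^{-n}$. In effect the paper hides your telescoping identity $T_n=2^{-n}$ inside a single reindexing step. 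The paper's version is a bit slicker---one substitution instead of a separate product evaluation---while your approach is more transparent about the origin of each factor in the final formula, in particular making it visible that the single-index product $\prod_i\bigl(1+\tfrac{\lambda_{2i-1+\eps}}{m+1-\eps-2i}\bigr)$ comes precisely from the diagonal of $\Xi_1$.
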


\begin{remark}
Note that the power of 2 is $\dim \fn_-$ if $\lambda_m=0$ and is $\dim \fn_- + 1$ when $\lambda_m>0$. In the latter case, $L_\lambda$ is a direct sum of two finite length $\fn_-$-modules. However, as we will see in the Example~\ref{ex:D-spin}, the total ranks of the Tor groups of these two modules do not agree in general.

Also, the proof of Corollary \ref{cor:2n+2n-1 conj} in this setting is again an immediate consequence of the above dimension formula. The case where not all parts of $\lambda$ are equal is again identical to the argument used in Remark \ref{rk:2nCorJustification}, and if all $\lambda_i$ are equal and positive, the $i=n$ term in the last product is $\ge 2$.
\end{remark}  

\begin{proof}
We will simplify $\Xi_1$ from Theorem~\ref{thm:kostantBCD}, which is given by
    \[
      \Xi_1=\prod_{1 \leq i \leq j \leq n} \frac{2 ( 2n+1-i-j) + \lambda_{2i-1+\eps} + \lambda_{2j-1+\eps}}{2n+2 - i - j} ,
      \]
      by shuffling the order of the numerators and denominators. First, we separate the product of the numerators by isolating the cases when $j=n$:
    \[
      \prod_{\substack{1 \leq i \leq n}} (2n+2 -2i + \lambda_{2i-1+\eps} + \lambda_{2n-1+\eps})
      \prod_{\substack{1 \leq i \leq j \leq n\\ j \ne n}} (4n+2 -2i-2j+ \lambda_{2i-1+\eps} + \lambda_{2j-1+\eps}).
    \]
    Second, we separate the product of the denominators by isolating the cases when $i=j$:
    \[
      \prod_{\substack{1 \leq i \leq n}} (2n+2 - 2i)
      \prod_{\substack{1 \leq i \leq j \leq n\\ i \ne j}} (2n+2 - i - j).
    \]
    We have indexed the second product so that if we do the substitution $j \mapsto j-1$, it matches the previous second product. Finally, we combine the terms:
    \begin{align*}
      \eta &= \prod_{\substack{1 \le i \le n}} \left( 1 + \frac{\lambda_{2i-1+\eps}+ \lambda_{2n-1+\eps}}{2n+2-2i} \right) \cdot \prod_{\substack{1 \le i\le j \le n-1}}2 \left( 1 + \frac{\lambda_{2i-1+\eps} + \lambda_{2j-1+\eps}}{2(2n+1 -i-j)} \right)\\
      &= 2^{-n} \prod_{1 \le i \le j \le n} 2 \left( 1 + \frac{\lambda_{2i-1+\eps} + \lambda_{2j-1+\eps}}{2(2n+1-i-j)} \right).
    \end{align*}
    Now we combine everything; we have written the terms to suggest pulling out powers of 2. In total, the power of 2 that we get is
    \[
      n + 1 - \zeta(\lambda) + \binom{n}{2} +2 \binom{n+\eps}{2} -n + \binom{n+1}{2} = \dim \bigwedge^2 V + 1 - \zeta(\lambda). \qedhere
    \]
  \end{proof}

\begin{example}
Consider \( A = \Sym(\bigwedge^2 V) \) with \( \dim V = 4 \). Take the representation
\[
  L_{(1,1,0,0)}^{\fso (V \oplus V^*)} = \bigwedge^2 (V^* \oplus V).
\]
The terms in the minimal free resolution of this \( A \)-module are:
\begin{align*}
&\bigwedge^2 V^* \otimes A, && S = \emptyset, \\
&\bS_{1,1,-1,-1} V \otimes A(-1), && S = \{1\}, \\
&\bS_{3,1,1,-1} V \otimes A(-3), && S = \{2\}, \\
&\bS_{3,2,2,-1} V \otimes A(-4) \oplus \bS_{4,1,1,0} V \otimes A(-4), && S = \{1,2\},\ \{3\}, \\
&\bS_{4,2,2,0} V \otimes A(-5), && S = \{1,3\}, \\
&\bS_{4,4,2,2} V \otimes A(-7), && S = \{2,3\}, \\
&\bS_{4,4,3,3} V \otimes A(-8), && S = \{1,2,3\}.
\end{align*}
We have $\rho_\lambda^{\mathrm{top}} = \rho_\lambda^{\mathrm{bot}} = (2,0)$. Each piece corresponding to a fixed \( T \subset \{1,3\} \) has total dimension $90$. Its character is equal to the character of the tensor product:
\[
L_{(2,0)}^{\fsp_4} \otimes L_{(2,0)}^{\fso_4},
\]
where $\dim L_{(2,0)}^{\fsp_4}  = 10$ and $\dim L_{(2,0)}^{\fso_4}  = 9$.
\end{example}

\begin{example} \label{ex:D-spin}
Consider arbitrary \( n \), and let \( A = \Sym(\bigwedge^2 V) \) with \( \dim V = 2n \). Take the half-spinor representation
\[
L_{\omega_{4n-1}}^{\fso_{4n}} = \bigwedge^{\mathrm{even}} V,
\]
the sum of all even-degree exterior powers of \( V \) (where $\omega_{4n-1} = (\frac12, \frac12, \dots , \frac12)$). Up to a factor of $1/2$ times the trace representation, the terms in the free resolution of this \( A \)-module are of the form
\[
\bS_\lambda V \otimes A(-|\lambda|/2),
\]
where \( \lambda \) is a self-conjugate partition with even rank (i.e., the number of squares along the main diagonal of the Young diagram of \( \lambda \) is even). We can compute the weights $\rho^{\rm top}_{\omega_{4n-1}}$ and $\rho^{\rm bot}_{\omega_{4n-1}}$.

Our methods allow us to compute only the total rank of the $\Pin$-representation with highest weight $(\frac12, \frac12, \dots , \frac12)$. Concretely, up to a factor of a trace representation this is simply the exterior algebra $\bigwedge^\bullet V$ viewed as a finite dimensional $\Sym (\bigwedge^2 V)$-module. The highest weights of the representations that appear in this resolution are precisely all self-conjugate partitions contained in a $2n \times 2n$ box. 

For example, assume $m = 2n =4$. After factoring out by $1/2$ times a trace representation, the representations appearing in the minimal free resolution of the even half-spinor representation are given by:
\[ 
      0 \to   \bS_{(4,4,4,4)} V \otimes A(-8)  \to \bS_{(4,4,2,2)} V \otimes A(-6)   \to    \bS_{(4,3,2,1)} V \otimes A(-5)  \] \[ \to \begin{matrix} \bS_{(3,3,2)} V \otimes A(-4) \\ \oplus \\ \bS_{(4,2,1,1)} V \otimes A(-4) \end{matrix} \to  \bS_{(3,2,1)} V \otimes A(-3) \to  \bS_{(2,2)} V \otimes A(-2) \to    A.
\]
Taking the ranks of all of these yields the list:
\[
\{\, 1,\ 20,\ 64,\ 90,\ 64,\ 20,\ 1 \,\}
\]
which yields a total rank of $260$. Likewise, for the odd half-spinor representation we obtain:
\[ 
      0 \to   \bS_{(4,4,4,3)} V \otimes A(-8)  \to \bS_{(4,4,3,2)} V \otimes A(-7)   \to    \bS_{(4,3,3,1)} V \otimes A(-6)  \] \[ \to \begin{matrix} \bS_{(3,3,3)} V \otimes A(-5) \\ \oplus \\ \bS_{(4,1,1,1)} V \otimes A(-4) \end{matrix} \to  \bS_{(3,1,1)} V \otimes A(-3) \to  \bS_{(2,1)} V \otimes A(-2) \to   V \otimes  A(-1).
\]
Taking ranks yields
\[
\{\, 4,\ 20,\ 36,\ 40,\ 36,\ 20,\ 4 \,\}
\]
adding up to a total rank of $160$ instead. Note that even though there is an outer automorphism of $\so (V \oplus V^*)$ that switches the two half-spinor representations, this outer automorphism does \emph{not} preserve the parabolic subalgebra, and thus the total ranks of the Betti numbers need not be preserved. In any case, adding up these two ranks tells us that the total rank of the Betti numbers of $\bigwedge^\bullet V$ in this case is $420$. 

On the other hand, we compute $\rho^{\rm top}_{(1/2,1/2)} = (3/2,1/2)$ and $\rho^{\rm bot}_{(1/2,1/2)} = (3/2,1/2)$.
The corresponding ``ranks'' are given by:
$$\dim L_{(3/2,1/2)}^{\fsp_2} = 35/4, \quad \dim L_{(3/2,1/2)}^{\Pin_2} = 12.$$
Notice that since $L_{(3/2,1/2)}^{\fsp_2}$ is only formally defined via the symplectic Weyl character formula, there is no reason that setting all $y_i = 1$ should give an integer (and indeed in this case it does not). Our formula for the total rank still applies, however, and gives $2^2 \cdot 35/4 \cdot 12 = 420$.
\end{example}

\begin{example}\label{ex:evenSpinorOddDim}
Let \( n \) be arbitrary, and let \( V \) be a vector space of dimension \( 2n + 1 \). Consider the algebra \( A = \Sym(\bigwedge^2 V) \), and the half-spinor representation
\[
L_{\omega_{4n+1}}^{\fso_{4n+2}} = \bigwedge^{\mathrm{even}} V.
\]
The minimal free resolution of this \( A \)-module has terms of the form
\[
\bS_\lambda V \otimes A(-|\lambda|/2),
\]
where \( \lambda \) is a self-conjugate partition of even rank. 

Explicitly, assume $n=2$. The highest weights of the representations (up to $1/2$ times a trace representation) appearing in the minimal free resolution are given by:
\[
\begin{aligned}
&\{0,\ 0,\ 0,\ 0,\ 0\}, && \{2,\ 2,\ 0,\ 0,\ 0\}, && \{3,\ 2,\ 1,\ 0,\ 0\}, && \{3,\ 3,\ 2,\ 0,\ 0\}, \\
&\{4,\ 2,\ 1,\ 1,\ 0\}, && \{4,\ 3,\ 2,\ 1,\ 0\}, && \{4,\ 4,\ 2,\ 2,\ 0\}, && \{4,\ 4,\ 4,\ 4,\ 0\}, \\
&\{5,\ 2,\ 1,\ 1,\ 1\}, && \{5,\ 3,\ 2,\ 1,\ 1\}, && \{5,\ 4,\ 2,\ 2,\ 1\}, && \{5,\ 4,\ 4,\ 4,\ 1\}, \\
&\{5,\ 5,\ 2,\ 2,\ 2\}, && \{5,\ 5,\ 4,\ 4,\ 2\}, && \{5,\ 5,\ 5,\ 4,\ 3\}, && \{5,\ 5,\ 5,\ 5,\ 4\}
\end{aligned}
\]
Taking ranks yields the following sequence:
\[
\begin{aligned}
\{\, 
1,\ 50,\ 280,\ 315,\ 450,\ 1024,\ 560,\ 70, \\
224,\ 700,\ 720,\ 160,\ 175,\ 126,\ 40,\ 5 
\,\}
\end{aligned}
\]
in which case the total rank may be computed as $4900$. On the other hand, we compute:
$$\rho^{\rm top}_{(1/2,1/2,1/2,1/2,1/2)} = (3/2,1/2), \quad \rho^{\rm bot}_{(1/2,1/2,1/2,1/2,1/2)} = (5/2,3/2,1/2).$$
The corresponding ranks are given by $\dim L_{(3/2,1/2)}^{\fsp_4} = 35/4$ and $\dim L_{(5/2,3/2,1/2)}^{\Pin_6} = 280$. The total rank of the even half-spinor representation is half of the total rank of the $\Pin_{10}$-representation of highest weight $(\frac12,\frac12,\frac12,\frac12,\frac12)$ (this is only true when $V$ is odd-dimensional). Our formula implies that the total rank equals
  $\frac{1}{2} \cdot 2^2 \cdot 35/4 \cdot 280 = 4900$.
\end{example}

\subsection{Exterior algebra}

For each $i$, let $h'(i) = (i-1,1^{i-1})$ be the hook partition. This is the transpose of $h(i-1)$ defined in the previous section (for $i\ge 2$); our convention is that $h'(1)$ is the empty partition.

\begin{lemma}\label{lem:typeDHookLemma}
  For a subset $S \subseteq [m]$, we have $-\beta_0(S)^{\rm op} = \bigcup_{i \in S} h'(m+1-i)$, where by union we mean to nest the corresponding hook shapes. 
\end{lemma}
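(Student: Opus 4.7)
The proof follows the same structure as Lemma~\ref{lem:typeCHookLemma} (the Type~C hook lemma), with minor adaptations for the Type~D values $\gamma = 1$ and $\delta = (m-1, m-2, \ldots, 0)$.

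First, I would compute an explicit formula for $-\beta_0(S)^{\rm op}$. For $S = \{s_1 < \cdots < s_k\} \subseteq [m]$ with complement $S^c = \{s_1' < \cdots < s_{m-k}'\}$, a direct computation of $\iota_0(S)$, sorting in weakly decreasing order, and subtracting $\delta$ yields
\[
-\beta_0(S)^{\rm op} = (m - s_1,\ m + 1 - s_2,\ \ldots,\ m + k - 1 - s_k,\ s'_{m-k} - (m-k),\ \ldots,\ s'_1 - 1).
\]
Comparing with the Type~C formula, the only difference is a shift by $2$ in the first $k$ entries, which is precisely the shift that converts the hook $h(i) = (i+1, 1^{i-1})$ into $h'(i) = (i-1, 1^{i-1})$.

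Next, I would induct on $k = |S|$. The base case $k = 0$ is immediate. For $k \geq 1$, first handle the degenerate case $s_1 = m$ (which forces $k = 1$): all entries of $-\beta_0(\{m\})^{\rm op}$ vanish, matching $h'(1) = \varnothing$. For $s_1 < m$, the fact that $s_j' = j$ for $j < s_1$ forces the last $s_1 - 1$ entries of $-\beta_0(S)^{\rm op}$ to be zero, so the associated partition has exactly $m + 1 - s_1$ rows, with first row of length $m - s_1$ and $m - s_1$ subsequent rows of length at least $1$. Since $h'(m + 1 - s_1) = (m - s_1,\, 1^{m - s_1})$ has precisely these dimensions, deleting the first entry and subtracting $1$ from each of the next $m - s_1$ entries peels off this hook from the Young diagram of $-\beta_0(S)^{\rm op}$.

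The remaining task is to identify the resulting sequence with $-\beta_0(S'')^{\rm op}$ computed in $[m-1]$, where $S'' := \{s_2 - 1, \ldots, s_k - 1\}$ is the image of $S \setminus \{s_1\}$ under the natural order-preserving bijection $[m] \setminus \{s_1\} \to [m-1]$; this same bijection carries $S^c$ to $(S'')^c$. Substituting into the formula of the first step then verifies the match. By the induction hypothesis,
\[
-\beta_0(S'')^{\rm op} = \bigcup_{i \in S''} h'((m-1) + 1 - i) = \bigcup_{j=2}^{k} h'(m + 1 - s_j),
\]
and reattaching $h'(m + 1 - s_1)$ yields the claim. The main obstacle is the bookkeeping in this last step: one must verify that the simultaneous shifts $s_j \mapsto s_j - 1$ (for $j \geq 2$) and $m \mapsto m - 1$ in the formula reproduce exactly the outcome of the hook removal. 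This requires splitting into the first $k - 1$ positions, the middle positions, and the trailing zeros, and checking that in each range the new indexing gives the claimed value; the verification is routine but slightly tedious.
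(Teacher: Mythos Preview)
Your proposal is correct and follows essentially the same inductive argument as the paper: compute the explicit formula for $-\beta_0(S)^{\rm op}$, handle the degenerate case $s_1=m$, and then peel off the outermost hook $h'(m+1-s_1)$ by deleting the first entry and decrementing the next $m-s_1$ entries. Your treatment of the inductive step is in fact slightly more precise than the paper's: where the paper says the residual sequence is ``$-\beta_0(\{s_2,\dots,s_k\})^{\rm op}$ with $m$ now $m-1$,'' you correctly shift the remaining elements to $S''=\{s_2-1,\dots,s_k-1\}\subseteq[m-1]$, which is what actually makes the formula and the hook sizes $h'(m+1-s_j)$ line up.
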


We remark that it is irrelevant whether we specify that the subsets have even size or not since $h'(1)$ is empty; also, exactly one of $S \cup \{m\}$ and $S \setminus \{m\}$ has even size.

\begin{proof}
  Let \( S = \{s_1 < \cdots < s_k\} \subseteq [m] \), and let \( S^c = \{s_1' < \cdots < s_{m-k}'\} \) denote its complement in \( [m] \).
We have
  \begin{align*}
    -\beta_0(S)^{\rm op} &= (m-s_1, \dots, m+k-1-s_k, s'_{m-k}-(m-k), \dots, s_1' - 1).
  \end{align*}
  Now we proceed by induction: when $k = 0$ the result is clear.

  Now suppose $k \geq 1$. If $s_1=m$, then $-\beta_0(S)^{\rm op} = 0$ and there is nothing to show, so suppose otherwise. Then $s'_i=i$ for $i < s_1$, so that $-\beta_0(S)^{\rm op}$ has $m-s_1+1$ positive entries. Hence, if we delete the first entry and subtract 1 from the remaining $m-s_1$ positive entries, we are removing the hook $h'(m+1-s_1)$ from the partition $-\beta_0(S)^{\rm op}$. However, it is also clear that the resulting sequence is $-\beta_0(\{s_2,\dots,s_k\})^{\rm op}$ where $m$ is now $m-1$.
\end{proof}

We will now use Lemma~\ref{lem:typeDHookLemma} to convert subsets into unions of hook shapes: recall that this says that $-\beta_0(S)^{\rm op} = \bigcup h'(2n+1-i)$ where $h'(k)$ is the hook partition $(k-1,1^{k-1})$.

This exhibits one obvious symmetry: $-\beta_0(S)^{\rm op}$ and $-\beta_0(S^c)^{\rm op}$ are complementary partitions inside the $2n \times (2n-1)$ rectangle. In particular, the Schur functors are dual to one another up to a power of the determinant. We will restrict the action to the orthogonal Lie algebra, in which case the two Schur functors are isomorphic.

\begin{remark}
  When $\lambda=0$, we have $\rho^{\rm bot}_\lambda = \rho^{\rm top}_\lambda = (n-1,n-2,\dots,1)$. Furthermore, both have the same character as the orthogonal Schur functor $\bS_{[n-1,n-2,\dots,1]}(\bC^{2n})$ by Proposition~\ref{prop:stair-sosp}. This recovers Kostant's $\rho$-decomposition formula for $\fso_{2n}$.
  \end{remark}

Indeed, Theorem \ref{thm:kostantBCD} recovers \emph{both} the type B and D cases of Kostant's theorem:

\begin{corollary}[Type B/D Kostant $\rho$-decompositions]\label{cor:typeBKostant}
Let \(\fg = \mathfrak{so}_m\) and let $\rho = (\frac{m}{2}-1, \frac{m}{2}-2, \dots , \frac{m}{2}-n)$
denote the half-sum of the positive roots. Then there is a decomposition of $\so_{m}$-representations:
\[
  \bigwedge^\bullet (\so_{m}) \cong 2^n \cdot  (L^{\so_{m}}_\rho)^{\otimes 2}.
\]
\end{corollary}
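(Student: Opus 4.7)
The plan is to carry out the classical character calculation, specialized to $\fg=\so_m$. First, from the root-space decomposition $\so_m=\fh\oplus\bigoplus_{\alpha\in\Delta}(\so_m)_\alpha$ with $\dim\fh=n$, the multiplicativity of exterior algebras gives
\[
\ch\bigwedge^\bullet\so_m \;=\; 2^n\prod_{\alpha\in\Delta^+}(1+e^\alpha)(1+e^{-\alpha}) \;=\; 2^n\prod_{\alpha\in\Delta^+}(e^{\alpha/2}+e^{-\alpha/2})^2,
\]
using the elementary identity $(1+e^\alpha)(1+e^{-\alpha})=2+e^\alpha+e^{-\alpha}=(e^{\alpha/2}+e^{-\alpha/2})^2$ and the fact that the Cartan contributes the factor $(1+1)^n$.

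Second, I would invoke Kostant's description of the multiset of weights of $L^{\so_m}_\rho$ already quoted in the introduction after the display of the classical $\rho$-decomposition: these weights are $\{\tfrac12\sum_{\beta\in\Delta^+}\epsilon_\beta\beta : \epsilon_\beta\in\{\pm 1\}\}$, yielding
\[
\ch L^{\so_m}_\rho \;=\; \prod_{\alpha\in\Delta^+}(e^{\alpha/2}+e^{-\alpha/2}).
\]
Squaring and multiplying by $2^n$, one obtains $\ch\bigwedge^\bullet\so_m=2^n(\ch L^{\so_m}_\rho)^2$. Since $\so_m$ is semisimple, both $\bigwedge^\bullet\so_m$ and $(L^{\so_m}_\rho)^{\otimes 2}$ are finite-dimensional, semisimple $\so_m$-modules, so character equality upgrades to a $\so_m$-equivariant isomorphism, as required.

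The only ingredient imported from outside this paper is Kostant's weight formula for $L^\fg_\rho$, which is already cited in the introduction, so there is no substantive obstacle to this approach. One could alternatively attempt to extract the corollary directly from Theorem~\ref{thm:kostantBCD} at $\lambda=0$ by identifying the product $s^\rC_{\rho^{\rm top}_0}(y)\,s^\rD_{\rho^{\rm bot}_0}(y)$ with the appropriate multiple of $(\ch L^{\so_m}_\rho)^2$ through a suitable extension of Proposition~\ref{prop:stair-sosp} to stair-shaped weights shifted by one; the principal headache along that route is the correct bookkeeping of the several powers of~$2$ built into the normalizations of $s^\rC$ and $s^\rD$ and the distinction between $\Pin$ and $\fso$ characters when the last coordinate vanishes. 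The classical character argument sidesteps these conventions entirely.
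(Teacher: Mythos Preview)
Your proof is correct but takes a genuinely different route from the paper's. You give the classical Kostant argument sketched in the introduction; the paper instead deduces the corollary from its own Theorem~\ref{thm:kostantBCD} at $\lambda=0$, precisely the alternative you mention and set aside in your last paragraph.

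For $m$ even the paper's argument is indeed short: the remark immediately preceding the corollary observes that $\rho^{\rm top}_0=\rho^{\rm bot}_0=(n-1,\dots,0)$, and Proposition~\ref{prop:stair-sosp} identifies both $s^\rC$ and $s^\rD$ factors with $\ch L^{\so_{2n}}_\rho$. For $m=2n+1$ odd the paper does real work: it returns to the matrix $M'_\lambda(y;1)$ from Proposition~\ref{prop:typeBCDdetForm} in the type~D setup with $\dim V=2n+1$, performs column operations on the $(n+1)\times(n+1)$ block to reduce it to an $n\times n$ matrix $M^{\rm top}$, and then checks via the identity $(y^{2d}+y^{-2d}-2)/(y-y^{-1})=(y^d-y^{-d})\sum_k y^k$ and a further column subtraction that $\det M^{\rm top}/\prod_i(y_i-y_i^{-1})=\det M^{\rm bot}$. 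Combined with the factorization of $a_\delta$ from Lemma~\ref{lem:adeltaIdentities}(2), this gives $(\det M^{\rm bot}/b_{\rho^\rB})^2=(\ch L^{\so_{2n+1}}_\rho)^2$. So the ``bookkeeping headache'' you anticipated is handled not by tracking powers of $2$ through the $s^\rC$/$s^\rD$ normalizations, but by a direct determinant manipulation that bypasses them.

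Your argument is shorter and self-contained; the paper's is longer but serves a different purpose, namely demonstrating that the generalized $\rho$-decomposition genuinely recovers Kostant's original one as a special case rather than merely standing alongside it.
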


\begin{proof}
    Note that the proof when $m$ is even is a direct consequence of Theorem \ref{thm:kostantBCD}, but the case when $m$ is odd is not as immediate. We consider $\det M'_\lambda(y_1,\dots,y_n;1)$ as in the proof of Proposition~\ref{prop:typeBCDdetForm}, but we will group terms differently this time. Recall that for general $\lambda$ this determinant can be written as a product:
    \begin{align*}
      (-1)^{\binom{n+1}{2}} &
\det \begin{bmatrix}
y_1^{\lambda_2 + 2n - 1} - y_1^{-\lambda_2 - 2n + 1} & \cdots & y_1^{\lambda_{2n} + 1} - y_1^{-\lambda_{2n} - 1} \\
\vdots & \ddots & \vdots \\
y_n^{\lambda_2 + 2n - 1} - y_n^{-\lambda_2 - 2n + 1} & \cdots & y_n^{\lambda_{2n} + 1} - y_n^{-\lambda_{2n} - 1}
\end{bmatrix} \cdot \\
      &\qquad \det 
\begin{bmatrix}
y_1^{\lambda_1 + 2n} + y_1^{-\lambda_1 - 2n} & \cdots & y_1^{\lambda_{2n-1} + 2} + y_1^{-\lambda_{2n-1} - 2} & 1 \\
\vdots & \ddots & \vdots & \vdots \\
y_n^{\lambda_1 + 2n} + y_n^{-\lambda_1 - 2n} & \cdots & y_n^{\lambda_{2n-1} + 2} + y_n^{-\lambda_{2n-1} - 2} & 1 \\
2 & \cdots & 2 & 1
\end{bmatrix}
    \end{align*}
    Set $\lambda = 0$. Let $M^{\rm bot}$ denote the first matrix that we are taking the determinant of. Subtracting 2 times column $n+1$ from columns $1, \dots , n$ in the second matrix that appears, we find that this determinant can be written as $\det (M^{\rm top})$, where
    \[
      M^{\rm top} = 
\left[
y_i^{2n - 2j + 2} + y_i^{-2n + 2j - 2} - 2
\right]_{1 \leq i,j \leq n}
\]
Next, by Lemma \ref{lem:adeltaIdentities} the denominator $a_{\delta} (y_1 , \dots , y_n , y_1^{-1} , \dots , y_{n}^{-1} , 1)$ can be written as the product
\[
  (-1)^{\binom{n+1}{2}}(b_{\rho^\rB} (y_1 , \dots ,y_n))^2 \cdot \prod_{i=1}^n (y_i-y_i^{-1}).
\]
    We claim that the quotient $\det (M^{\rm top})/\prod_{1 \leq i \leq n} (y_i - y_i^{-1})$ is equal to $\det (M^{\rm bot})$; to see this, notice that by multilinearity of determinants there is an equality
    \begin{align*}
      \frac{\det (M^{\rm top})}{\prod_{1 \leq i \leq n} (y_i - y_i^{-1})} &= \det \left[
\frac{y_i^{2n - 2j + 2} + y_i^{-2n + 2j - 2} - 2}{y_i - y_i^{-1}}
      \right]_{1 \leq i,j \leq n}\\
      &= \det
\bigg[\sum_{\substack{1 \le k \le 2n-2j+1 \\ \text{odd}}} \left( y_i^{k} - y_i^{-k} \right)
\bigg]_{1 \leq i,j \leq n},
    \end{align*}
    where the second equality follows from 
\[
  \frac{y^{2d} + y^{-2d} -2}{y-y^{-1}} = \frac{(y^d - y^{-d})^2}{y-y^{-1}} = (y^d - y^{-d}) \sum_{k=-(d-1)}^{d-1} y^k.
\]
Upon subtracting columns $j$ from column $j-1$ (in order) from $j=2,\dots,n$, we obtain the matrix $M^{\rm bot}$. It thus follows that the determinant we are interested in can be written as
\[
  \left( \frac{\det (M^{\rm bot})}{b_{\rho^\rB} (y_1 , \dots , y_n)} \right)^2 = ({\rm char} L^{\fso_{2n+1}}_\rho)^2. \qedhere
\]
\end{proof}

\begin{example}
The following example for $n=2$ illustrates how terms get regrouped in the proof of Theorem \ref{thm:kostantBCD} in this more concrete combinatorial reformulation; in the below we also give the $\fso_4$ weights:
\[
\begin{tikzpicture}[>=latex]

  \node (top) at (0, 0) {\scriptsize 
  \begin{tabular}{rl}
      \textbf{$\varnothing$:} & \\
      $\varnothing \leadsto$ & $\varnothing = (0)$  \\
      \{2,4\} $\leadsto$ & {\color{red}\ydiagram{2,1,1}}$=(0) + (1,1) + (2)$ \\
    \end{tabular}
  };

  \node (left) at (6, 0) {\scriptsize
    \begin{tabular}{rl}
      \textbf{\{1\}:} & \\
      \{\textbf{1},4\} $\leadsto$ & {\color{red}\ydiagram{3,1,1,1}} $=(0) + (2)$ \\
      \{\textbf{1},2\} $\leadsto$ & {\color{red}\ydiagram{3,3,2,2}} $=(1,1)$ \\
    \end{tabular}
  };

  \node (right) at (0, -2.5) {\scriptsize
    \begin{tabular}{rl}
      \textbf{\{3\}:} & \\
      \{\textbf{3},4\} $\leadsto$ & {\color{red}\ydiagram{1,1}} $=(1,1)$ \\
      \{2,\textbf{3}\} $\leadsto$ & {\color{red}\ydiagram{2,2,2}} $=(0) + (2)$ \\
    \end{tabular}
  };

  \node (bottom) at (6, -2.5) {\scriptsize
    \begin{tabular}{rl}
      \textbf{\{1,3\}:} & \\
      \{\textbf{1},\textbf{3}\} $\leadsto$ & {\color{red}\ydiagram{3,2,2,1}} $=(0) + (1,1) + (2)$ \\
      \{\textbf{1},2,\textbf{3},4\} $\leadsto$ & {\color{red}\ydiagram{3,3,3,3}} $=(0)$ \\
    \end{tabular}
  };



\end{tikzpicture}  \qedhere
\]
\end{example}

\section{Type B Examples}\label{sec:typeBex}

\subsection{Dimension Formula}

\begin{proposition}\label{prop:explicit-U}
Assume the Type~B setup of \S\ref{sec:typeBCDrhodecomp}. Then there is an equality:
\begin{align*}
     \dim \rH_\bullet ( \fn_-; L_\lambda) &= 2^{\lceil m^2/2\rceil} 
    \!\!\!\!\!\prod_{\substack{1 \leq i < j \leq m\\ i \equiv j \pmod 2}} \left(1 + \frac{\lambda_{i} - \lambda_{j}}{j-i} \right) \cdot \Theta,
    \end{align*}
where $\Theta > 1$. 
\end{proposition}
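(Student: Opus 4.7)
The plan is to start from the explicit dimension formula in Theorem~\ref{thm:kostantBCD}(4), which in Type~B (with $\gamma=\frac12$, $\gamma'=0$) reads
\[
\dim\rH_\bullet(\fn_-;L_\lambda)=2^{n+1}\prod_{\substack{1\le i<j\le m\\ i\equiv j\pmod 2}}2\Big(1+\frac{\lambda_i-\lambda_j}{j-i}\Big)\cdot\Xi_1\Xi_2,
\]
and then track where the required power of $2$ arises. Pulling the explicit $2$'s out of the middle product contributes $2^{\binom{n}{2}+\binom{n+\eps}{2}}$, and a short computation shows that the proposition is equivalent to the strict inequality $\Xi_1\Xi_2>2^{E}$ with $E=n^2-1$ when $\eps=0$ and $E=n^2+n$ when $\eps=1$. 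Since each factor of $\Xi_1$ and $\Xi_2$ is positive and nondecreasing in every coordinate of a dominant $\lambda$, the product attains its minimum on the dominant cone at $\lambda=0$, so it suffices to verify the inequality there.

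For $\eps=1$ there is a bijection $(i,j)\mapsto(i,j+1)$ between the indexing sets of $\Xi_1$ and of $\Xi_2$ (both of size $\binom{n+1}{2}$), and a direct telescoping identity shows that each paired value
\[
\Big(2-\frac{1}{D}\Big)\Big(2+\frac{1}{D-1}\Big)=4+\frac{1}{D(D-1)},\qquad D=2n+2-i-j,
\]
strictly exceeds $4$. Multiplying gives $\Xi_1\Xi_2>4^{\binom{n+1}{2}}=2^{n^2+n}$, settling this case.

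For $\eps=0$ the indexing set of $\Xi_1$ has $n$ more elements than that of $\Xi_2$, so I would pair each $(i,j)\in\Xi_1$ with $i\ge 2$ to $(i-1,j)\in\Xi_2$; this is a bijection between $\binom{n}{2}$-element sets, and the same identity yields each paired value $>4$, contributing a factor $\ge 2^{n^2-n}$. The $n$ unpaired ``top-row'' factors of $\Xi_1$, namely $\{(1,j):1\le j\le n\}$, contribute at $\lambda=0$
\[
\prod_{j=1}^n\Big(2-\frac{1}{2n+1-j}\Big)=2^n\cdot a_n,\qquad a_n:=\frac{\binom{4n}{2n}}{2^{2n}\binom{2n}{n}},
\]
so altogether $\Xi_1\Xi_2\ge 2^{n^2}\cdot a_n$, and the proof reduces to showing $a_n>\frac12$.

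The main obstacle is this last analytic inequality, which I would handle by proving the stronger bound $a_n>1/\sqrt{2}$. A direct computation of the ratio gives
\[
\frac{a_n}{a_{n-1}}=1-\frac{1}{4(2n-1)^2}<1,
\]
so $(a_n)_{n\ge 1}$ is strictly decreasing. Stirling's formula implies $\lim_{n\to\infty}a_n=1/\sqrt{2}$, and since each $a_n$ is rational while $1/\sqrt{2}$ is irrational, the limit is never attained; hence $a_n>1/\sqrt{2}>1/2$ for all $n\ge 1$, giving $\Xi_1\Xi_2>2^{n^2-1/2}>2^{n^2-1}$, which completes the proof.
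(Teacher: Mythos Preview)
Your proof is correct and takes a genuinely different route from the paper's. Both arguments reduce to bounding $\Xi_1\Xi_2$ from below at $\lambda=0$ via monotonicity, but where the paper simply invokes the asymptotic lower bound of \cite{GKT} for $\dim\rH_\bullet(\fn_-;\bC)$ to conclude $\Xi_1\Xi_2>2^{p-n(n+\eps)-1}$, you give a self-contained elementary computation: the pairing identity $(2-1/D)(2+1/(D-1))=4+1/(D(D-1))$ handles $\eps=1$ completely and the $\binom{n}{2}$ matched pairs when $\eps=0$, and the residual top-row product is identified with $2^n\binom{4n}{2n}/(2^{2n}\binom{2n}{n})$ and bounded below by $2^n/\sqrt{2}$ via a decreasing-sequence argument. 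Your approach is longer but avoids external references and yields a sharper quantitative bound; the paper's proof is quicker but depends on \cite{GKT}. One minor point: the irrationality remark is unnecessary, since a strictly decreasing sequence converging to $1/\sqrt{2}$ automatically satisfies $a_n>a_{n+1}\ge 1/\sqrt{2}$ for all $n$.
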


\begin{proof}
      A similar method of proof can be used here, but we instead take advantage of the lower bound obtained by \cite{GKT} which says that (let $p=\lceil m^2/2 \rceil$):
      $$\dim \rH_\bullet (\fn_- ; \bC) \geq 2^{p} \cdot \begin{cases}
          2^{-1/2} m^{1/8} \kappa & m \ \text{odd},\\
          (m^2-1)^{1/16} \kappa & m \ \text{even},
      \end{cases}$$
      where $\kappa \approx 1.3814...$; notice that the terms $2^{-1/2} m^{1/8} \kappa$ and $(m^2-1)^{1/16} \kappa$ are both $> 1$ for all $m \geq 3$ odd and $m \geq 2$ even, respectively (the statement for $m=1$ is trivial so we avoid it). Thus we find that
      \begin{align*}
          \dim \rH_\bullet ( \fn_-; \bC) &= 2^p \cdot \frac{\Xi_1 \cdot \Xi_2}{2^{p - n(n+\eps) -1} }  \geq 2^p c
      \end{align*}
      where $c > 1$, implying that $\Xi_1 \cdot \Xi_2 > 2^{ p - n(n+\eps) -1} $. Since $\Xi_1 , \Xi_2$ can only increase as the entries of $\lambda$ are increased, we find that $\Xi_1 \cdot \Xi_2 > 2^{p - n(n+\eps) -1}$ for all choices of $\lambda$, and thus we take this as the desired expression for $\Theta$. 
\end{proof}

\begin{remark}
    In a similar vein to Corollary \ref{cor:2n+2n-1 conj}, the above expression tells us that if $\lambda \neq 0$, then
    $$\dim \rH_\bullet ( \fn_-; L_\lambda) \geq \frac{3}{2} \cdot \dim \rH_\bullet ( \fn_-; \bC).$$
    This suggests more generally that the total dimension of the homology of any nontrivial representation $W$ of a finite dimensional nilpotent Lie algebra $\fn$ should satisfy
    \[
      \dim \rH_\bullet (\fn ; W) \geq \frac{3}{2} \cdot \dim \rH_\bullet (\fn ; \bC). \qedhere
    \]
\end{remark}
      
\begin{example}
Assume $\dim V = m = 2n = 4$, $\lambda = (2,1)$. The highest weights of the $\fgl(V)$-representations appearing in the minimal free resolution over \(A=\rU(\fn_-)=\rU(\bigwedge^2V^*\oplus V^*)\) are given by:
    \[
\begin{aligned}
&\{0,\ 0,\ -1,\ -2\}, && \{1,\ 0,\ -1,\ -2\}, && \{2,\ 1,\ -1,\ -2\}, && \{2,\ 2,\ -1,\ -2\}, \\
&\{4,\ 1,\ 1,\ -2\}, && \{4,\ 2,\ 1,\ -2\}, && \{4,\ 3,\ 2,\ -2\}, && \{4,\ 3,\ 3,\ -2\}, \\
&\{6,\ 1,\ 1,\ 0\}, && \{6,\ 2,\ 1,\ 0\}, && \{6,\ 3,\ 2,\ 0\}, && \{6,\ 3,\ 3,\ 0\}, \\
&\{6,\ 5,\ 2,\ 2\}, && \{6,\ 5,\ 3,\ 2\}, && \{6,\ 5,\ 4,\ 3\}, && \{6,\ 5,\ 4,\ 4\}.
\end{aligned}
\]
Taking the ranks of all of these yields the list:
\[
\begin{aligned}
\{\, 
20,\ 64,\ 175,\ 140,\ 300,\ 540,\ 420,\ 189, \\
189,\ 420,\ 540,\ 300,\ 140,\ 175,\ 64,\ 20 
\,\}
\end{aligned}
\]
Adding all of these up we obtain a total rank of $3696$. On the other hand, we compute:
$\rho^{\rm top}_{(2,1)} = (7/2,1/2)$ and $\rho^{\rm bot}_{(2,1)} = (5/2,1/2)$.
The corresponding ranks are given by:
$$\dim L_{(7/2,1/2)}^{\fsp_{4}} = \frac{77}{2}, \quad \dim L_{(5/2,1/2)}^{\Pin_{4}} = 24.$$
Our formula implies that the total rank equals $2^2 \cdot \frac{77}{2} \cdot 24 = 3696$.
\end{example}

\begin{remark}
    Notice that the total rank of the homology of the spinor representation is combinatorially equivalent to the total rank of the homology of a free $2$-step nilpotent Lie algebra, which is covered in the next section.
\end{remark}

\begin{remark}
This is the first case where the nilpotent radical \(\fn=V\oplus\bigwedge^2V\) is nonabelian; hence \(\rU(\fn_-)\) is
not a polynomial ring, and the complexes we get are parabolic BGG resolutions over \(\rU(\fn_-)\).
\end{remark}

\subsection{GKT Formula}

The following is an immediate consequence of Kostant's theorem in type B applied to the special case $\lambda = (0)$:

\begin{proposition}
    Let $V$ be an $m$-dimensional space. Then there is an equality
    $$\rH_\bullet (\bigwedge^2 V^* \oplus V^*; \bC) = \bigoplus_{\substack{\lambda \subseteq m \times m, \\ \lambda = \lambda^T}} \bS_\lambda (V).$$
\end{proposition}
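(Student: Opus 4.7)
The statement is the $\lambda=0$ specialization of Kostant's theorem in the Type~B setup of \S\ref{sec:typeBCDrhodecomp}. The plan is to apply Kostant and then translate the indexing of $W^\fp$ from subsets of $[m]$ into self-conjugate partitions in the $m\times m$ box. Since the Type~B Weyl group and Levi are identical to Type~C, the bijection $W^\fp \leftrightarrow \cP([m])$ of Proposition~\ref{prop:typeCgaleAndBruhat} carries over verbatim, and setting $\lambda=0$ gives
\[
\rH_\bullet(\fn_-;\bC) \;\cong\; \bigoplus_{S\subseteq[m]} \bS_{\beta_0(S^{\inv})}(V),
\]
where $\beta_0$ is the weight function of \S\ref{sec:typeBCDidentities} specialized to $\gamma=\tfrac12$, $\delta=\rho^{\rB}$.

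Next, I would establish the Type~B analogue of Lemmas~\ref{lem:typeCHookLemma} and~\ref{lem:typeDHookLemma}: for every $S=\{s_1<\cdots<s_k\}\subseteq[m]$,
\[
-\beta_0(S)^{\rm op} \;=\; \bigcup_{i\in S}\ (m+1-i,\,1^{m-i}),
\]
where $(m+1-i,\,1^{m-i})$ is the self-conjugate hook of size $2(m-i)+1$ and the union denotes nesting of hook shapes along the main diagonal. The proof is by induction on $|S|$, following the pattern of Lemma~\ref{lem:typeCHookLemma}: after negating the entries of $\rho^{\rB}$ in positions $S$ and resorting, the outermost hook of $-\beta_0(S)^{\rm op}$ has arm and leg both equal to $m-s_1$, and stripping it reduces to the same statement for $S\setminus\{s_1\}$ inside an $(m-1)\times(m-1)$ box. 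The half-integer shift $\gamma=\tfrac12$ is precisely what forces arm to equal leg, yielding self-conjugate hooks rather than the off-by-one hooks of Types~C and D.

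Finally, I would invoke the classical bijection between self-conjugate partitions $\lambda\subseteq m\times m$ and subsets of $[m]$ via principal-hook decomposition: if $\lambda$ has $k$ diagonal boxes, its $j$-th principal hook is self-conjugate with arm/leg length $\lambda_j-j$, and these arm lengths form a strictly decreasing $k$-subset of $\{0,\dots,m-1\}$. Combining this with the preceding step (and $S\mapsto S^{\inv}$) shows that $\{-\beta_0(S^{\inv})^{\rm op} : S\subseteq[m]\}$ is precisely the set of self-conjugate partitions in the $m\times m$ box. Identifying $\bS_{\beta_0(S^{\inv})}(V)$ with the Schur functor indexed by its associated partition (absorbing the requisite power of $\det V$) then gives the stated equality.

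The main obstacle is the second step: the inductive verification of the hook-union formula. Once that half-integer analogue of the Type~C and D bookkeeping is in hand, the rest of the argument is formal; no genuinely new combinatorial input is expected beyond tracking how the $\gamma=\tfrac12$ shift propagates through the negate-and-sort procedure.
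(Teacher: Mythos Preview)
Your approach is correct and essentially identical to the paper's. The paper declares the proposition ``an immediate consequence of Kostant's theorem in type B applied to the special case $\lambda=(0)$'' and then, immediately afterward, states exactly the hook lemma you anticipate as Lemma~\ref{lem:typeBHookLemma} (with $h''(i)=(i,1^{i-1})$, so $h''(m+1-i)=(m+1-i,1^{m-i})$, matching your formula), noting that its proof is identical to that of Lemma~\ref{lem:typeCHookLemma}. Your three steps---Kostant at $\lambda=0$, the Type~B hook-union identity by the same induction, and the principal-hook bijection with self-conjugate partitions in the $m\times m$ box---are precisely the ingredients the paper assembles, just spelled out in more detail than the paper bothers to.

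One small remark: your parenthetical about ``absorbing the requisite power of $\det V$'' is slightly misstated. The identification is simply $\bS_{\beta_0(S)}(V)=\bS_{-\lambda^{\rm op}}(V)\cong\bS_\lambda(V^*)$ for $\lambda=-\beta_0(S)^{\rm op}$; no determinant twist is needed to pass from the $\fgl(V)$-weight $\beta_0(S)$ to the partition $\lambda$ indexing the dual Schur functor. (Whether the final answer is written as $\bS_\lambda(V)$ or $\bS_\lambda(V^*)$ is a convention the paper glosses over; the indexing set of self-conjugate $\lambda\subseteq m\times m$ is the substantive content, and it is stable under box-complementation.)
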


For each $i$, let $h''(i) = (i,1^{i-1})$ denote the hook partition. The proof of the following lemma is identical to the proof of Lemma \ref{lem:typeCHookLemma}, so we omit it and only give a statement:

\begin{lemma}\label{lem:typeBHookLemma}
  For a subset $S \subseteq [m]$, we have $-\beta_0(S)^{\rm op} = \bigcup_{i \in S} h''(m+1-i)$, where by union we mean to nest the corresponding hook shapes. Here, $\beta_0$ and the reverse–negate operation $-\!(\cdot )^{\rm op}$ are as in \S\ref{sec:typeBCDidentities}. 
\end{lemma}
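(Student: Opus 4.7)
The plan is to mirror the proof of Lemma~\ref{lem:typeCHookLemma} verbatim, with the only substantive change being the bookkeeping of the shift by $\gamma$. Recall that $\beta_0(S)$ is defined using the shifted ``staircase'' $\delta_j = m + 1 - j - \gamma$, and the three types B/C/D differ only in whether $\gamma = \tfrac12$, $0$, or $1$. Since $h''(i) = (i,1^{i-1})$ has exactly one more cell than $h'(i)$ and one fewer than $h(i)$, the hook shape in the conclusion tracks the value of $\gamma$ linearly, so it is natural to expect the same inductive decomposition to go through.

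First, I would carry out the direct computation of $-\beta_0(S)^{\mathrm{op}}$ for $\gamma = \tfrac12$. Writing $S = \{s_1 < \cdots < s_k\}$ and $S^c = \{s_1' < \cdots < s_{m-k}'\}$, the entries of $\iota_0(S)$ are the half-integers $\pm(m + \tfrac12 - \cdot)$; sorting them in weakly decreasing order (positives from $S^c$ first, then negatives from $S$) and subtracting $\delta$ gives
\[
\beta_0(S) = \bigl(1 - s_1',\,\dots,\,(m-k) - s_{m-k}',\,s_k - m - k,\,s_{k-1} - m - k + 1,\,\dots,\,s_1 - m - 1\bigr),
\]
so that
\[
-\beta_0(S)^{\mathrm{op}} = \bigl(m+1-s_1,\,m+2-s_2,\,\dots,\,m+k-s_k,\,s_{m-k}' - (m-k),\,\dots,\,s_1' - 1\bigr).
\]
This is precisely the Type~C formula shifted by $-1$ in each of the first $k$ entries, as expected.

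Next, I would run the same induction on $k = |S|$ as in Lemma~\ref{lem:typeCHookLemma}. The base case $k=0$ is vacuous, and when $k \ge 1$, the key observation is that $s_j' = j$ for all $j < s_1$, so $-\beta_0(S)^{\mathrm{op}}$ has exactly $m - s_1 + 1$ positive entries: the first entry $m+1-s_1$ (the new ``arm''), together with $m-s_1$ further positive entries $s_{m-k}' - (m-k),\dots$ (the ``leg''). Deleting the first entry and subtracting $1$ from each of the remaining $m-s_1$ positive entries removes precisely the hook $h''(m+1-s_1) = (m+1-s_1,\,1^{m-s_1})$. A direct comparison shows that the resulting sequence equals $-\beta_0(\{s_2,\dots,s_k\})^{\mathrm{op}}$, now computed with $m$ replaced by $m-1$. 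The inductive hypothesis then yields $\bigcup_{i \in \{s_2,\dots,s_k\}} h''(m-i)$, and nesting the outermost hook $h''(m+1-s_1)$ around it gives the claimed identity.

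The only non-routine step is verifying that the $\gamma = \tfrac12$ arithmetic still separates cleanly into one arm and one leg of equal length, which is exactly what makes the resulting hook the ``balanced'' one $(i,1^{i-1})$ rather than $(i+1,1^{i-1})$ or $(i-1,1^{i-1})$. This is essentially a one-line parity check, so I expect no real obstacle; the proof is cosmetic once Lemma~\ref{lem:typeCHookLemma} is in hand.
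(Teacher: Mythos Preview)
Your proposal is correct and follows exactly the approach the paper intends: the paper omits the proof entirely, stating that it is identical to that of Lemma~\ref{lem:typeCHookLemma}, and you have reproduced that argument with the $\gamma=\tfrac12$ adjustment. One small bookkeeping point (present already in the paper's proof of Lemma~\ref{lem:typeCHookLemma}): after peeling the outer hook the residual subset should be $\{s_2-1,\dots,s_k-1\}\subseteq[m-1]$ rather than $\{s_2,\dots,s_k\}$, so that the induction yields $h''(m+1-s_i)$ rather than $h''(m-s_i)$.
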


\begin{example}
The following example shows how to group terms for the homology of the 2-step nilpotent Lie algebra when $m = 4$. 
    {\ytableausetup{boxsize = 0.5em}
\begin{center}
\begin{tikzpicture}[>=latex]

  \node (top) at (0, 5.4) {\scriptsize 
  \begin{tabular}{rl}
      $\varnothing$: & \\
      $\varnothing \leadsto $ & $\varnothing \quad = (0)$ \\
      \{2\} $\leadsto$ & {\color{red}\ydiagram{3,1,1}} $= (1) + 2 \, (2,1) + (3) $ \\
      \{4\} $\leadsto$ & {\color{red}\ydiagram{1}}  $\quad =(1)$ \\
      \{2,4\} $\leadsto$ & {\color{red}\ydiagram{3,2,1}} $= 2\,(1,1) + 2\,(2) + 2\,(2,2) + 2\,(3,1)$ 
    \end{tabular}
  };

  \node (left) at (-4, 1.6) {\scriptsize
    \begin{tabular}{rl}
      \textbf{\{1\}:} & \\
      \{\textbf{1}\} $\leadsto$ & {\color{red}\ydiagram{4,1,1,1}} $= (1) + (3)$  \\
      \{\textbf{1},2\} $\leadsto$ & {\color{red}\ydiagram{4,4,2,2}} $= (0) + (2) + 2\,(2,2)$ \\
      \{\textbf{1},4\} $\leadsto$ & {\color{red}\ydiagram{4,2,1,1}} $= 2\,(1,1) + (2) + 2\,(3,1)$\\
      \{\textbf{1},2,4\} $\leadsto$ & {\color{red}\ydiagram{4,4,3,2}} $= (1) + 2\,(2,1)$ 
    \end{tabular}
  };

  \node (right) at (4, 1.6) {\scriptsize
    \begin{tabular}{rl}
      \textbf{\{3\}:} & \\
      \{\textbf{3}\} $\leadsto$ & {\color{red}\ydiagram{2,1}} $= (1) + 2\,(2,1)$\\
      \{2,\textbf{3}\} $\leadsto$ & {\color{red}\ydiagram{3,3,2}} $= 2\,(1,1) + (2) + 2\,(3,1) $ \\
      \{\textbf{3},4\} $\leadsto$ & {\color{red}\ydiagram{2,2}} $= (0) + (2) + 2\,(2,2) $\\
      \{2,\textbf{3},4\} $\leadsto$ & {\color{red}\ydiagram{3,3,3}} $= (1) + (3)$
    \end{tabular}
  };

  \node (bottom) at (0, -2.0) {\scriptsize
    \begin{tabular}{rl}
      \textbf{\{1,3\}:} & \\
      \{\textbf{1},\textbf{3}\} $\leadsto$ & {\color{red}\ydiagram{4,3,2,1}} $= 2\,(1,1) + 2\,(2) + 2\,(2,2) + 2\,(3,1)$ \\
      \{\textbf{1},2,\textbf{3}\} $\leadsto$ & {\color{red}\ydiagram{4,4,4,3}} $=(1)$ \\
      \{\textbf{1},\textbf{3},4\} $\leadsto$ & {\color{red}\ydiagram{4,3,3,1}} $= (1) + 2 \, (2,1) + (3)$\\
      \{\textbf{1},2,\textbf{3},4\} $\leadsto$ & {\color{red}\ydiagram{4,4,4,4}} $=(0)$ 
    \end{tabular}
  };

  \draw[-] (top) -- (left);
  \draw[-] (top) -- (right);

  \draw[-] (left) -- (bottom);
  \draw[-] (right) -- (bottom);
\end{tikzpicture}
\end{center}} 
We can verify directly from the $\so_4$ weights that each block adds up to the same representation.
\end{example}

The total rank of this homology was computed in \cite{GKT}, but we can recover this formula using our results: 

\begin{lemma}\label{lem:GKTrankFormula}
    There is an equality:
    \[
\ch \rH_\bullet(\bigwedge^2 V^* \oplus V^*;\,\bC)=
\begin{cases}
2^n\ \big(\ch L^{\fsp_{2n}}_{(n-\frac12,\dots,\frac12)}\big)\,
       \big(\ch L^{\Pin_{2n}}_{(n-\frac12,\dots,\frac12)}\big), & m=2n,\\[4pt]
2^{n+1}\ \big(\ch L^{\fso_{2n+1}}_{(n,n-1,\dots,1)}\big)^{\!2}, & m=2n+1.
\end{cases}
\]
\end{lemma}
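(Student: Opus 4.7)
The plan is to apply Theorem~\ref{thm:kostantBCD}(3) to the trivial module $\lambda=0$ in the Type~B setting ($\gamma=\tfrac12$, $\gamma'=0$, $\zeta(0)=1$), which produces the overall prefactor $2^{n}$, and then to handle the two parities of $m$ separately.

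For $m=2n$, one reads off from the definitions of $\rho^{\mathrm{top}}_\lambda$ and $\rho^{\mathrm{bot}}_\lambda$ that $\rho^{\mathrm{top}}_0=\rho^{\mathrm{bot}}_0=(n-\tfrac12,n-\tfrac32,\dots,\tfrac12)$, each an $n$-tuple. Proposition~\ref{prop:WeylCharForms}(3)--(4) then directly identifies the two factors as $\ch L^{\fsp_{2n}}_{(n-1/2,\dots,1/2)}$ (read as a formal Weyl character, since the weight is half-integral) and $\ch L^{\Pin_{2n}}_{(n-1/2,\dots,1/2)}$ (an honest character because the last entry is positive), settling this case.

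For $m=2n+1$, the analogous computation gives $\rho^{\mathrm{top}}_0=(n-\tfrac12,\dots,\tfrac12)$ of length $n$ and $\rho^{\mathrm{bot}}_0=(n+\tfrac12,n-\tfrac12,\dots,\tfrac12)$ of length $n+1$, so Theorem~\ref{thm:kostantBCD}(3) reads
\[
  \ch \rH_\bullet(\fn_-;\bC) \;=\; 2^{n}\cdot s^\rC_{(n-1/2,\dots,1/2)}(y)\cdot s^\rD_{(n+1/2,\dots,1/2)}(y,1).
\]
Unwinding both Weyl character formulas, the lemma reduces to the auxiliary determinant identity
\[
  d_{(2n+1/2,\,2n-3/2,\,\dots,\,1/2)}(y_1,\dots,y_n,1)
  \;=\; b_{(2n-1/2,\,2n-5/2,\,\dots,\,3/2)}(y_1,\dots,y_n)\cdot\prod_{i=1}^{n}(y_i-y_i^{-1}).
\]
I would prove this by column reduction on the $(n+1)\times(n+1)$ matrix whose halved determinant is $d_\nu(y,1)$: its last row is $(2,\dots,2)$, and replacing column $j$ by column $j$ minus column $j+1$ for $j=1,\dots,n$ clears that row except in the final column. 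Because consecutive entries of $\nu$ differ by $2$ and $\mu_j=\nu_j-1$, each surviving $(i,j)$ entry simplifies to $(y_i-y_i^{-1})(y_i^{\mu_j}-y_i^{-\mu_j})$, and a block-triangular expansion delivers the stated factorization.

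To conclude, combine this identity with Lemma~\ref{lem:adeltaIdentities}(2), which supplies the analogous denominator identity $c_{(n,\dots,1)}(y)\,d_{(n,\dots,0)}(y,1)=b_{(n-1/2,\dots,1/2)}(y)^{2}\prod_{i}(y_i-y_i^{-1})$, and observe that the $b_\alpha$ and $c_\alpha$ determinants are literally equal. The common factor $\prod_{i}(y_i-y_i^{-1})$ cancels and the expression collapses to $2^{n+1}\,b_{(2n-1/2,\dots,3/2)}(y)^{2}/b_{(n-1/2,\dots,1/2)}(y)^{2}$, which equals $2^{n+1}(\ch L^{\fso_{2n+1}}_{(n,\dots,1)}(y))^{2}$ by Proposition~\ref{prop:WeylCharForms}(2). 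The only nontrivial step is the column-reduction identity; the surrounding calculation is formal bookkeeping with the Weyl character formulas.
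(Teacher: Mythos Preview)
Your proof is correct and follows essentially the same approach as the paper: the even case is a direct citation of Theorem~\ref{thm:kostantBCD}(3), and the odd case combines that product formula with Lemma~\ref{lem:adeltaIdentities}(2) for the denominator and a column-reduction identity to collapse the $(n+1)$-variable $d$-determinant to an $n$-variable $b$-determinant. Your adjacent-column subtraction is a slightly cleaner variant of the manipulation in the proof of Corollary~\ref{cor:typeBKostant} (which first subtracts a constant column and then uses the identity for $(y^{2d}+y^{-2d}-2)/(y-y^{-1})$ before reducing), but the two are equivalent and the paper explicitly says the argument is ``essentially identical'' to that corollary.
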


\begin{proof}
    The proof when $m = 2n$ is even takes no work and is just a direct translation of Theorem \ref{thm:kostantBCD}. When $m = 2n+1$ is odd, the proof is essentially identical to the proof of Corollary \ref{cor:typeBKostant}.
  \end{proof}

\bibliographystyle{alpha}
\bibliography{biblio}

\begin{thebibliography}{EFW11}

\bibitem[BGG75]{BGG1975}
I.~N. Bernstein, I.~M. Gelfand, and S.~I. Gelfand.
\newblock Differential operators on the base affine space and a study of
  g-modules.
\newblock In I.~M. Gel'fand, editor, {\em Lie Groups and their
  Representations}, pages 21--64. Halsted Press, New York--Toronto, 1975.
\newblock Proc. Summer School, Bolyai J{\'a}nos Math. Soc., Budapest, 1971.

\bibitem[Car86]{carlsson}
Gunnar Carlsson.
\newblock Free $(\mathbb{Z}/2)^k$-actions and a problem in commutative algebra.
\newblock In {\em Transformation Groups {Pozna{\'n}} 1985}, volume 1217 of {\em
  Lecture Notes in Mathematics}, pages 79--83. Springer, Berlin, 1986.

\bibitem[CEM90]{CEM}
Hara Charalambous, E.G. Evans, and Matthew Miller.
\newblock {B}etti numbers for modules of finite length.
\newblock {\em Proceedings of the American Mathematical Society},
  109(1):63--70, 1990.

\bibitem[Cha91]{charalambous}
Hara Charalambous.
\newblock {B}etti numbers of multigraded modules.
\newblock {\em Journal of Algebra}, 137(2):491--500, 1991.

\bibitem[CJ97]{gjranks}
Grant Cairns and Barry Jessup.
\newblock New bounds on the {{B}etti} numbers of nilpotent {{L}ie} algebras.
\newblock {\em Communications in Algebra}, 25(2):415--430, 1997.

\bibitem[EFW11]{EFW}
David Eisenbud, Gunnar Fl{\o}ystad, and Jerzy Weyman.
\newblock The existence of equivariant pure free resolutions.
\newblock {\em Annales de l'Institut Fourier}, 61(3):905--926, 2011.

\bibitem[EG88]{evangrif}
E.~G. Evans and Phillip Griffith.
\newblock Binomial behavior of {{B}etti} numbers for modules of finite length.
\newblock {\em Pacific Journal of Mathematics}, 133(2):267--276, 1988.

\bibitem[FH91]{fultonharris}
William Fulton and Joe Harris.
\newblock {\em {R}epresentation {T}heory: {A} {F}irst {C}ourse}, volume 129 of
  {\em Graduate Texts in Mathematics}.
\newblock Springer, New York, 1991.
\newblock Reprinted as eBook, 2013.

\bibitem[FLS18]{FLS}
Nicolas Ford, Jake Levinson, and Steven~V Sam.
\newblock Towards {{B}oij--{S}{\"o}derberg} theory for {{G}rassmannians}: the
  case of square matrices.
\newblock {\em Algebra \& Number Theory}, 12(2):285--303, 2018.

\bibitem[GKT02]{GKT}
Johannes Grassberger, Alastair King, and Paulo Tirao.
\newblock On the homology of free 2-step nilpotent {{L}ie} algebras.
\newblock {\em Journal of Algebra}, 254:213--225, 2002.

\bibitem[Hal85]{halp1}
Stephen Halperin.
\newblock Rational homotopy and torus actions.
\newblock In {\em Aspects of Topology}, volume~93 of {\em London Mathematical
  Society Lecture Note Series}, pages 293--306. Cambridge University Press,
  Cambridge, 1985.

\bibitem[Hal87]{halp2}
Stephen Halperin.
\newblock Le complexe de {{K}oszul} en alg{\`e}bre et topologie.
\newblock {\em Annales de l'Institut Fourier}, 37(4):77--97, 1987.

\bibitem[IW18]{IW}
Srikanth~B. Iyengar and Mark~E. Walker.
\newblock Examples of finite free complexes of small rank and small homology.
\newblock {\em Acta Mathematica}, 221:143--158, 2018.

\bibitem[Kos61]{kostant}
Bertram Kostant.
\newblock {L}ie algebra cohomology and the generalized {{B}orel--{W}eil}
  theorem.
\newblock {\em Annals of Mathematics}, 74(2):329--387, 1961.

\bibitem[Kos97]{kostant-clifford}
Bertram Kostant.
\newblock {C}lifford algebra analogue of the {{H}opf--{K}oszul--{S}amelson}
  theorem, the $\rho$-decomposition
  $c(\mathfrak{g})=\mathrm{End}\,v_{\rho}\otimes c(p)$, and the
  $\mathfrak{g}$-module structure of $\bigwedge \mathfrak{g}$.
\newblock {\em Advances in Mathematics}, 125(2):275--350, 1997.

\bibitem[Lep77]{Lepowsky1977}
James Lepowsky.
\newblock A generalization of the {{B}ernstein--{G}elfand--{G}elfand}
  resolution.
\newblock {\em Journal of Algebra}, 49(2):496--511, 1977.

\bibitem[Tir00]{tir}
Paulo Tirao.
\newblock A refinement of the toral rank conjecture for 2-step nilpotent
  {{L}ie} algebras.
\newblock {\em Proceedings of the American Mathematical Society},
  128(10):2875--2878, 2000.

\bibitem[VW25]{vwTRC}
Keller VandeBogert and Mark~E. Walker.
\newblock The total rank conjecture in characteristic 2.
\newblock {\em Duke Mathematical Journal}, 174(2):287--312, 2025.

\bibitem[Wal17]{walkerTRC}
Mark~E. Walker.
\newblock Total {{B}etti} numbers of modules of finite projective dimension.
\newblock {\em Annals of Mathematics}, 186(2):641--646, 2017.

\bibitem[Wey03]{weyman2003}
Jerzy Weyman.
\newblock {\em {C}ohomology of {V}ector {B}undles and {S}yzygies}, volume 149
  of {\em Cambridge Tracts in Mathematics}.
\newblock Cambridge University Press, Cambridge, 2003.

\end{thebibliography}
  
\end{document}